\renewcommand{\thefootnote}{}
\newcommand\blfootnote[1]{%
	\begingroup
	\renewcommand\thefootnote{}\footnote{#1}%
	\addtocounter{footnote}{-1}%
	\endgroup
}
\newtheorem{thm}{Theorem}[section]
\newtheorem{lem}[thm]{Lemma}
\newtheorem{prop}[thm]{Proposition}
\newtheorem{rem}{Remark}[section]
\def\QEDopen{{\setlength{\fboxsep}{0pt}\setlength{\fboxrule}{0.2pt}\fbox{\rule[0pt]{0pt}{1.3ex}\rule[0pt]{1.3ex}{0pt}}}} %
\def\QED{\QEDopen} %
\def\endproof{\hspace*{\fill}~\QED\par\endtrivlist\unskip}%
\def\bma#1\ema{{\allowdisplaybreaks\begin{split}#1\end{split}}}
\numberwithin{equation}{section}
\begin{document}
	\title{Global well-posedness and large-time behavior of the compressible Navier-Stokes equations with hyperbolic heat conduction}
	\author[a]{ Fucai Li}
	\author[b]{ Houzhi Tang}
	\author[c]{Shuxing Zhang} 
	\affil[a] {School of Mathematics, Nanjing University, Nanjing 210093, P. R.  China}
    	\affil[b] { School of Mathematics and Statistics, Anhui Normal University, Wuhu 241002, P. R. China}
	\affil[c] {School of Mathematical Sciences, Jiangsu University, Zhenjiang 212013, P. R. China }
	\date{}
	\renewcommand*{\Affilfont}{\small\it}	
	\maketitle
	\blfootnote{
		{Email: fli@nju.edu.cn(F.-C. Li), houzhitang@amss.ac.cn(H.-Z. Tang),  zhangsx@ujs.edu.cn (Z.-S. Zhang)}
	}
	\allowdisplaybreaks
	\begin{abstract}
		The classical Fourier's law, which states that the heat flux is proportional to the temperature gradient, induces the paradox of  infinite propagation speed for heat conduction. To accurately simulate the real physical process, the hyperbolic model of heat conduction named Cattaneo's law was proposed, which leads to the finite speed of heat propagation. A natural question is whether the large-time behavior of the heat flux for compressible flow would be different for these two laws. In this paper, we aim to address this question by studying the global well-posedness and the optimal time-decay rates of classical solutions to the compressible Navier-Stokes system with Cattaneo's law. By designing a new method, we obtain the optimal time-decay rates for the highest order derivatives of the heat flux, which cannot be derived for the system with Fourier's law by Matsumura and Nishida [Proc. Japan Acad. Ser. A Math. Sci., 55(9):337-342, 1979]. In this sense, our results first reveal the essential differences between the two laws.
	\end{abstract}
	\noindent{\textbf{Key words.}
		Navier-Stokes equations, Cattaneo's law, global well-posedness, large-time behavior, optimal time-decay rate.
	}\\
	\textbf{2020 MR Subject Classification:}\  35Q30, 35B40, 35B65, 76N10.
	\section{Introduction}
	\subsection{The model}
	\hspace{2em}The compressible Navier-Stokes equations with heat conduction can be written in the following form
	$$
	\left\{\begin{array}{llll}
		\displaystyle \partial_t\rho+\textrm{div}(\rho u)=0,\\
		\displaystyle \partial_t(\rho u)+\mathrm{div}\big(\rho u\otimes u\big)+\nabla P=\text{div}S,\\
		\displaystyle \partial_t(\rho E)+\mathrm{div}(\rho Eu+Pu)+\text{div}q=\text{div}(uS).
	\end{array}\right.
	$$
	The unknown functions $\rho(x,t)$, $u(x,t)=(u_1(x,t),u_2(x,t),u_3(x,t))^\top$, $P(x,t)$, $e(x,t)$, $ E(x,t)=e+\frac{1}{2}|u|^2$  and $q(x,t)$ represent the fluid density, velocity, pressure, internal energy, total energy, and heat flux, respectively.  The stress tensor $S$ is given by
	$$
	S=2\tilde{\nu}\mathbb{D}u+\tilde{\eta}(\text{div}u)\mathbb{I}_3,
	$$
	where  $\mathbb{D}u=\frac{1}{2}(\nabla u+\nabla u^\top)$ is the so-called deformation tensor, $\mathbb{I}_3$ denotes the $3\times 3$ identity matrix, $\tilde{\nu}$ and $\tilde{\eta}$ are viscosity coefficients  satisfying
	\begin{align*}
		\tilde{\nu}>0, \quad\tilde{\eta}+\frac{2}{3}\tilde{\nu}\geq 0.	
	\end{align*}
	The heat flux $q$ is assumed to satisfy the following Cattaneo's law
	\begin{equation}\label{CLAW}
		\tau\partial_tq+q+\kappa\nabla\theta=0,	
	\end{equation}
	which gives rise to heat waves with finite propagation speeds. Here, $ \theta$ is the absolute temperature and $\kappa>0$ denotes the heat conductivity coefficient. The parameter $\tau$ is a relaxation time which represents the time is required to establish a steady state of heat conduction in an element suddenly exposed to the heat flux.
	
	In this paper, we consider the ideal gas model, i.e., the pressure $P$ and the internal energy $e$ satisfy
	$$
	P=R\rho\theta,~~e=\frac{R}{\gamma-1}\theta,
	$$
	where $\gamma>1$ is the adiabatic constant and $R>0$ represents the ideal gas constant.
	
	Therefore, we conclude from above to obtain the following compressible Navier-Stokes-Cattaneo (NSC) system
	\begin{equation}\label{NSC}
		\left\{\begin{array}{llll}
			\displaystyle \partial_t\rho+\textrm{div}(\rho u)=0,\\
			\displaystyle \partial_t(\rho u)+\mathrm{div}\big(\rho u\otimes u\big)+R\nabla (\rho\theta)=\tilde{\nu}\Delta u+(\tilde{\nu}+\tilde{\eta})\nabla\text{div} u,\\
			\displaystyle \frac{R}{\gamma-1} (\rho\partial_t\theta+\rho u\cdot\nabla \theta)+R\rho\theta \text{div}u+\text{div}q=2\tilde{\nu}|\mathbb{D}u|^2+\tilde{\eta}(\text{div}u)^2,  \\
			\displaystyle \tau \partial_tq+q+\kappa \nabla\theta=0.
		\end{array}\right.
	\end{equation}
	We consider the above system in the whole space $\mathbb{R}^3$ and supplement it with the initial data
	\begin{equation}\label{NSI-1}
		(\rho,u,\theta,q)(x,0)=(\rho_{0},u_{0},\theta_{0},q_{0})(x),
	\end{equation}
	and the far-field states
	\begin{equation}\label{NSI-2}
		\lim_{|x|\rightarrow \infty}(\rho,u,\theta,q)(x,t)=(\rho_*,0,\theta_*,0),
	\end{equation}
	where $\rho_*$ and $\theta_*$ denote the positive constants.
	
For the limit case $\tau=0$, the system \eqref{NSC} turns into the classical compressible Navier-Stokes equations, in which the relation between the heat flux and the temperature is governed by the classical Fourier's law,
	\begin{equation}\label{FL}
		q=-\kappa \nabla\theta.
	\end{equation}		
	As it's well known, Fourier's law is often used to describe the heat conduction phenomenon. Nevertheless,  it has a drawback of an inherent infinite propagation speed of signals, which implies that any local temperature disturbance causes an instantaneous perturbation in the temperature field at any point. In some situations, Fourier's law does not coincide with the experimental observations, especially when high-temperature gradients are considered, for example, see \cite{Liu2005Analysis,2000Effect,maxwell1867iv,onsager1931reciprocal}.  To redeem this deficiency, Cattaneo \cite{M1} proposed a damped version of  Fourier's law, i.e., \eqref{CLAW} by
	introducing a heat flux relaxation term to describe the finite speed of
	heat conduction.  Now, Cattaneo's law \eqref{CLAW} has been widely used in thermoelasticity and fluid models, see \cite{MR1888164,MR2562166,MR3186239} and the references therein. It is very interesting to point out that sometimes the results which hold for Fourier's law may be not true for Cattaneo's law. For example, Fern\'{a}ndez Sare and Racke \cite{MR2533927} showed that Fourier's law preserves the property of exponential stability while Cattaneo's law destroys such a property for certain Timoshenko-type thermoelastic system.
\subsection{Related Literature}
	\hspace{2em} Now we review some mathematical results on the compressible Navier-Stokes equations with Fourier's law and Cattaneo's law. First, due to its physical importance and mathematical challenge, there is a lot of works on the well-posedness and large-time behavior of solutions to the compressible Navier-Stokes equations combined with Fourier's law \eqref{FL}, for example,  see \cite{MR2225997,MR1970039,MR3708192,MR1867887,MR896014,MR1339675,MR2877344,MR1810944,MR2005201,MR106646,MR1488513,MR564670,MR149094,MR713680,MR2164944}. The local existence and uniqueness of smooth solutions were initiated by Serrin \cite{MR106646} and Nash \cite{MR149094} for initial density far away from vacuum. Later, Matsumura and Nishida \cite{MR564670} proved the global existence and uniqueness of the classical solution for small initial data without vacuum.  The $L^2$-time decay estimates were investigated by Mastumura and Nishida \cite{MR555060} for the small solution. Later, Li \cite{MR2164944} analyzed the detailed wave structure of the Green function for linearized system. Du and Wu \cite{MR3708192} established the pointwise estimates of the classical solution for nonlinear system.
	
	When the heat conduction is described by Cattaneo's law, there are also a few of contributions to the well-posedness and singular limit of the compressible NSC type system, see \cite{MR3810847,MR4312119,MR3518771,MR4436140,MR4413297,MR4988883} and the references therein. In \cite{MR3518771}, Hu and Racke considered the pressure $P=P(\rho,\theta)$ and the constant equilibrium state $(\rho,u,\theta,q)=(1,0,1,0)$. Under an additional assumption on the constant relaxation time $0<\tau<\frac{2\kappa}{P_\theta(1,1)^2}$, they proved the global existence and relaxation limit of the small smooth solution. Generally speaking, the relaxation constant $\tau$ is fixed for a specific fluid. Therefore, their results cannot be applied to the general compressible flow since the relaxation constant $\tau$ in \cite{MR3518771} is bounded by the constant $\frac{2\kappa}{P_\theta(1,1)^2}$ which depends on the heat conduction coefficient $\kappa$. Later, Hu, Racke and Wang \cite{MR4413297} investigated the formation of singularities in one-dimensional hyperbolic compressible Navier-Stokes equations with the nonlinear Cattaneo's law for
	heat conduction as well as through the constitutive Maxwell type relations for the stress tensor. Wu, Zhou, Li \cite{MR4720751} established the time-decay rates of the solution of the NSC system by using the pure energy method under some assumptions on the $\tau$. Later, Liu and Wu \cite{MR4649149} studied the space-time behavior of this system by using Green method under the same condition. Very recently, the second author and his collaborators \cite{MR4686805} obtained the same decay rates without the assumption on the $\tau$ by using the pure energy method. It should be mentioned that the optimal time-decay rates of the highest order derivatives of the heat flux can not be achieved in this method.

\subsection{Main Motivation}
	\hspace{2em}In previous works  \cite{MR4720751,MR4649149,MR4686805}, the time-decay rates of solutions were not shown to be optimal, as no lower bound of the decay rates was provided.
Moreover, due to the damping structure in Cattaneo's law, the decay rates of heat flux should be faster than density and velocity even for highest order derivatives. To fill these gaps, we aim to establish the optimal decay estimates of the solution to the NSC system by using spectral analysis instead of the pure energy method in this paper. Different from the large-time behavior of  compressible Navier-Stokes equations with Fourier's law has been studied in \cite{MR555060}, the  motivation of this paper is to address this question by studying the global well-posedness and large-time behavior of the classical solutions to the compressible Navier-Stokes system with the Cattaneo's law without any additional conditions on the relaxation time $\tau$. Moreover, due to the damping structure in Cattaneo's law, we first obtain the optimal time-decay rates for the highest order derivatives of the heat flux, which cannot be derived for the system with Fourier's law.

More precisely, our first results on the global existence and upper bound time decay rates of classical solutions to the Cauchy problem \eqref{NSC}-\eqref{NSI-2} is given as
	\begin{thm}\label{thm1}
		Let $s\geq 3$ be an integer. Assume that the initial data $(\rho_0-\rho_*,u_0,\theta_0-\theta_*,q_0)\in H^s(\mathbb{R}^3)$ satisfying
		\begin{equation}\label{in-data-e0}
			\|(\rho_0-\rho_*,u_{0},\theta_0-\theta_*,q_0)\|_{H^s}\leq \delta_0,
		\end{equation}
		for some sufficiently small constant $\delta_0>0$, then the Cauchy problem \eqref{NSC}-\eqref{NSI-2} admits a unique global classical solution $(\rho,u,\theta,q)$ enjoying
		\begin{align}
			&\|(\rho-\rho_*,u,\theta-\theta_*,q)(t)\|_{H^s}^2\notag\\
			&\quad +\int_0^t\big( \|\nabla \rho(r)\|_{H^{s-1}}^2+\|\nabla u(r)\|_{H^{s}}^2+\|\nabla \theta(r) \|_{H^{s-1}}^2+\|q(r)\|_{H^{s}}^2\big)\mathrm{d}r
			\leq C\delta_0^2.\label{main-est}
		\end{align}
		Moreover, if the initial perturbation $(\rho_0-\rho_*,u_0,\theta_0-\theta_*,q_0) \in L^1(\mathbb{R}^3)$, then it holds  that, for $0\leq k\leq s$,
		\begin{equation}\label{upper}
			\|\nabla^k(\rho-\rho_*,u,\theta-\theta_*)(t)\|_{L^2}\leq C(1+t)^{-\frac{3}{4}-\frac{k}{2}},\quad \|\nabla^kq(t)\|_{L^2}\leq C(1+t)^{-\frac{5}{4}-\frac{k}{2}},
		\end{equation}
		where the positive constant $C$ is independent of time.
	\end{thm}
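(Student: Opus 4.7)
The overall strategy follows the Matsumura--Nishida framework adapted to the hyperbolic heat flux: local $H^s$-existence, global extension via a uniform \emph{a priori} energy estimate, and optimal decay via spectral analysis of the linearization combined with a time-weighted Duhamel bootstrap. I would first recast \eqref{NSC} in perturbation variables $\varrho=\rho-\rho_*$, $\vartheta=\theta-\theta_*$. Local existence poses no new difficulty, since the Cattaneo equation for $q$ is a first-order linear equation with a full damping term; at the principal-symbol level the system is symmetric hyperbolic--parabolic and the extra unknown $q$ is simply carried along the flow without any new derivative loss.

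For the \emph{a priori} bound \eqref{main-est}, I would apply $\nabla^k$, $0\le k\le s$, to the four equations and test against the natural symmetric multipliers $(\tfrac{R\theta_*}{\rho_*}\nabla^k\varrho,\ \rho_*\nabla^k u,\ \tfrac{R\rho_*}{(\gamma-1)\theta_*}\nabla^k\vartheta,\ \kappa^{-1}\nabla^k q)$. The resulting identity produces dissipation only in $\|\nabla^{k+1}u\|_{L^2}^2$ (viscosity) and $\|\nabla^k q\|_{L^2}^2$ (Cattaneo damping). Dissipation in $\|\nabla^{k+1}\varrho\|_{L^2}^2$ is recovered in the classical way by testing the momentum equation against $\nabla^{k+1}\varrho$. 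Dissipation in $\|\nabla^{k+1}\vartheta\|_{L^2}^2$ is the genuinely new step: using Cattaneo's law as $\kappa\nabla\vartheta=-q-\tau\partial_t q$ and testing against $\nabla^{k+1}\vartheta$, the $\partial_t q$ piece is converted by a time integration by parts into a cross bracket $-\tau\langle\nabla^k q,\nabla^{k+1}\vartheta\rangle$ and a commutator absorbable by the already-available dissipation. Adding small multiples of these two cross estimates to the basic identity and bounding the nonlinearities by Moser/Kato--Ponce inequalities under the smallness ansatz yields $\frac{d}{dt}\mathcal E_s(t)+\mathcal D_s(t)\le 0$ with $\mathcal E_s\simeq\|(\varrho,u,\vartheta,q)\|_{H^s}^2$ and $\mathcal D_s$ equal to the dissipation displayed in \eqref{main-est}, from which a standard continuation argument yields the global solution.

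For the decay rates \eqref{upper}, I would Fourier-analyze the linearized symbol $\hat{\mathcal L}(\xi)$. At low frequency the eigenvalues split into three parabolically damped acoustic/entropy modes $\sim -c_j|\xi|^2$ and one strongly damped Cattaneo mode $\sim -\tau^{-1}$; crucially, the spectral projection of $\hat q$ onto the slow manifold is slaved to $\nabla\vartheta$ via $\hat q(\xi)\simeq -i\kappa\xi\hat\vartheta(\xi)$ modulo exponentially small corrections, contributing an extra factor $|\xi|$. This gives the $L^1\to L^2$ semigroup estimates $\|\nabla^k e^{-t\mathcal L}U_0\|_{L^2}\lesssim(1+t)^{-3/4-k/2}\|U_0\|_{L^1}+e^{-ct}\|U_0\|_{H^k}$ for the $(\varrho,u,\vartheta)$-component and $\|\nabla^k e^{-t\mathcal L}U_0\|_{L^2}\lesssim(1+t)^{-5/4-k/2}\|U_0\|_{L^1}+e^{-ct}\|U_0\|_{H^k}$ for the $q$-component. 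Defining the time-weighted functional $\mathcal M(t):=\sup_{0\le r\le t}\sum_{k=0}^s\big[(1+r)^{3/4+k/2}\|\nabla^k(\varrho,u,\vartheta)(r)\|_{L^2}+(1+r)^{5/4+k/2}\|\nabla^k q(r)\|_{L^2}\big]$ and plugging the linear bounds into Duhamel's formula, the nonlinear source is quadratic in $\mathcal M$ in $L^1\cap L^2$, which closes the inequality $\mathcal M(t)\le C\delta_0+C\mathcal M(t)^2$ under \eqref{in-data-e0}.

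The hard part is the sharp rate $(1+t)^{-5/4-s/2}$ for the top derivative $\|\nabla^s q\|_{L^2}$: at this order the Duhamel tail $e^{-ct}\|\nabla^s\mathcal N(U)\|_{L^2}$ is not absorbable by $L^1\to L^2$ semigroup bounds, and the slaving relation $q\approx-\kappa\nabla\vartheta$ is useless since $\nabla^{s+1}\vartheta$ lies outside the energy space. The ``new method'' that must be devised is to weight the energy identity for $\|\nabla^s q\|_{L^2}^2$ by $(1+t)^{5/2+s}$ and extract the Cattaneo damping $(1+t)^{5/2+s}\|\nabla^s q\|_{L^2}^2$ as a Lyapunov dissipation; the dangerous source term $\kappa\langle\nabla^{s+1}\vartheta,\nabla^s q\rangle$ is rewritten by integration by parts as $-\kappa\langle\nabla^s\vartheta,\nabla^s\mathrm{div}\,q\rangle$ and then the divergence is substituted from the energy equation in terms of $\partial_t\nabla^s\vartheta$ and $\nabla^s\mathrm{div}\,u$, producing a time derivative of $\|\nabla^s\vartheta\|_{L^2}^2$ (trivially absorbed on the left) plus a coupling with $\nabla^{s+1}u$ whose decay is already known. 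A Grönwall argument on the weighted inequality then delivers the optimal $(1+t)^{-5/4-s/2}$ rate for $\nabla^s q$, completing the proof.
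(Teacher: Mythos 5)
Your first two paragraphs and the decay argument for $0\le k\le s-1$ track the paper closely: the paper also reformulates around the equilibrium, proves \eqref{main-est} by combining a zero-order (entropy-type) estimate with high-order estimates and two cross terms $\frac{\mathrm{d}}{\mathrm{d}t}\int\nabla^{\alpha-1}w\cdot\nabla^\alpha n$ and $\frac{\mathrm{d}}{\mathrm{d}t}\int\nabla^{\alpha-1}\psi\cdot\nabla^\alpha\phi$ that recover dissipation of $\nabla\rho$ and $\nabla\theta$ (your ``time integration by parts of the $\partial_t q$ piece'' is exactly this second cross term), and then derives the rates by spectral analysis of the Green function plus a time-weighted Duhamel bootstrap, with the extra factor $|\xi|$ in the $q$-rows of the Green matrix producing the gain $(1+t)^{-5/4-k/2}$; for $k\le s-1$ the paper upgrades $q$ simply via $q=e^{-t/\tau}q_0-\frac{\kappa}{\tau}\int_0^te^{-(t-r)/\tau}\nabla\theta\,\mathrm{d}r$.

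The gap is in your last paragraph, precisely the step you single out as the hard one. After integrating $\kappa\langle\nabla^{s+1}\theta,\nabla^s q\rangle$ by parts and substituting $\operatorname{div}q$ from the energy equation, you are left (besides the harmless $\frac{\mathrm{d}}{\mathrm{d}t}\|\nabla^s\theta\|_{L^2}^2$ piece) with the \emph{linear} coupling $R\rho_*\theta_*\langle\nabla^s\theta,\nabla^s\operatorname{div}u\rangle$, and you assert that the decay of $\nabla^{s+1}u$ ``is already known.'' It is not: $u$ only lies in $H^s$, so $\|\nabla^{s+1}u(t)\|_{L^2}$ carries no pointwise-in-time decay rate — only $\int_0^\infty\|\nabla^{s+1}u\|_{L^2}^2\,\mathrm{d}r<\infty$ is available from the viscous dissipation. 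With the weight $(1+t)^{5/2+s}$ this term contributes, after Cauchy--Schwarz in time, something of order $(1+t)^{9/4+s/2}$, which is far from the $O(1)$ needed to conclude $\|\nabla^s q\|_{L^2}\lesssim(1+t)^{-5/4-s/2}$; Young's inequality does not help either, since the single-equation identity for $\|\nabla^s q\|_{L^2}^2$ contains no $\|\nabla^{s+1}u\|_{L^2}^2$ dissipation to absorb it, and the full order-$s$ energy only decays like $(1+t)^{-3/2-s}$. The paper circumvents this by a low--high frequency decomposition: $\|\nabla^s q^\ell\|_{L^2}\lesssim(1+t)^{-5/4-s/2}$ comes directly from the Green function (extra derivatives are free at low frequency), while $\|\nabla^s(n^h,w^h,\phi^h,\psi^h)\|_{L^2}\lesssim(1+t)^{-3/2-s/2}$ — even faster than needed — is proved by a dedicated high-frequency energy estimate in which the dangerous linear couplings such as $\sigma\langle\nabla^s\operatorname{div}w^h,\nabla^s\phi^h\rangle$ and $b\langle\nabla^s\operatorname{div}\psi^h,\nabla^s\phi^h\rangle$ cancel antisymmetrically between equations, every component is effectively damped at high frequency, and the nonlinear sources decay like $(1+t)^{-3-s}$. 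You would need to replace your weighted single-equation argument by something of this kind (or otherwise dispose of the $\langle\nabla^s\theta,\nabla^s\operatorname{div}u\rangle$ coupling) for the top-order estimate to close.
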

	
	Basing on the global existence and the upper bound time-decay rates, we can further show that the above rates are optimal.
	\begin{thm}\label{thm3}
		Suppose that the conditions in Theorem \ref{thm1} hold. If the Fourier transform of the initial perturbation $(\hat{n}_0,\hat{w}_0,\hat{\phi}_0,\hat{\psi}_0) \triangleq \Big(\frac{\hat{\rho}_0-\rho_*}{\rho_*},\frac{\hat{u}_0}{\sqrt{R\theta_*}},\frac{\hat{\theta}_0-\theta_*}{\sqrt{\gamma-1}\,\theta_*},\sqrt{\frac{\tau}{\kappa\rho_*R\theta_*^2}}\hat{q}_0\Big)$ satisfies
		\begin{equation}\label{in-data-optimal}
			\begin{aligned}
				&	\inf_{ |\xi|\leq r_0}|\hat{n}_0(\xi)|\geq \mu_0, ~~\text{and}~~\hat{w}_0(\xi)=\hat{\phi}_0(\xi)=0~~\text{for}~~|\xi|\leq r_0,
			\end{aligned}
		\end{equation}
		where $\mu_0$ is a positive constant and $r_0>0$ denotes a sufficiently small constant, then the global solution $(\rho,u,\theta,q)$ given by Theorem \ref{thm1} satisfies, for large time and $0\leq k\leq s$, that
		\begin{equation}\label{optimal-est}
			\bar{C}_*(1+t)^{-\frac{3}{4}-\frac{k}{2}}\leq \|\nabla^k(\rho-\rho_*,u,\theta-\theta_*)(t)\|_{L^2}\leq C(1+t)^{-\frac{3}{4}-\frac{k}{2}},
		\end{equation}
		and
		\begin{equation}\label{optimal-est1}
			\bar{C}_*(1+t)^{-\frac{5}{4}-\frac{k}{2}}\leq \|\nabla^kq(t)\|_{L^2}\leq C(1+t)^{-\frac{5}{4}-\frac{k}{2}},
		\end{equation}
		where $\bar{C}_*$ and $C$ are positive constants independent of time.
	\end{thm}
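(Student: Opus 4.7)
The plan is to combine Theorem \ref{thm1} with a low-frequency spectral analysis of the linearized operator and a Duhamel argument for the nonlinearity. Since the upper bounds in \eqref{optimal-est}--\eqref{optimal-est1} are already provided by Theorem \ref{thm1}, the task reduces to proving the matching lower bounds. Writing the normalized perturbation as $U\triangleq(n,w,\phi,\psi)$, the system \eqref{NSC} becomes $\partial_t U+\mathcal{A}U=\mathcal{N}(U)$ with $\mathcal{A}$ a constant-coefficient linear operator and $\mathcal{N}$ at least quadratic in $U$ and its spatial derivatives. Taking the Fourier transform and applying Duhamel gives
\begin{equation*}
\hat{U}(\xi,t)=e^{-t\hat{\mathcal{A}}(\xi)}\hat{U}_0(\xi)+\int_0^t e^{-(t-s)\hat{\mathcal{A}}(\xi)}\hat{\mathcal{N}}(U)(\xi,s)\,ds,
\end{equation*}
and the lower bounds will come from the linear contribution on the low-frequency ball $|\xi|\le r_0$, with the nonlinear piece absorbed by smallness.

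The core of the argument is an asymptotic expansion of the symbol $\hat{\mathcal{A}}(\xi)$ as $|\xi|\to 0$. After a Helmholtz decomposition, the solenoidal parts of $w$ and $\psi$ decouple (with purely parabolic damping for $w$ and exponential damping at rate $1/\tau$ for $\psi$), leaving a coupled $4\times 4$ system in the longitudinal variables $(\hat{n},\hat{w}\cdot\xi/|\xi|,\hat{\phi},\hat{\psi}\cdot\xi/|\xi|)$. A perturbative diagonalization shows that its eigenvalues split into three slowly decaying ``acoustic/thermal'' modes $-\alpha_j|\xi|^2+O(|\xi|^4)$ with $\operatorname{Re}\alpha_j>0$, together with a fast relaxation eigenvalue $-\tau^{-1}+O(|\xi|^2)$ inherited from \eqref{CLAW}. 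Crucially, because \eqref{CLAW} couples $\hat{q}$ to $\hat{\theta}$ through a factor $i\kappa\xi$, the spectral projector $\Pi_s(\xi)$ onto the slowly decaying subspace sends $\hat{n}_0$ to a vector whose $(n,w,\phi)$-components are of size $|\hat{n}_0(\xi)|$ while its $\psi$-component carries an extra factor of $|\xi|$. This structural gain is the origin of the stronger decay of $q$.

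Using \eqref{in-data-optimal} together with the smoothness of $\Pi_s(\xi)$ near $\xi=0$, I would then obtain, for some $r_1\in(0,r_0]$ and $c_1,c_2>0$, the pointwise linear lower bounds
\begin{equation*}
|\widehat{(n,w,\phi)}_{\mathrm{L}}(\xi,t)|\ge c_1 e^{-c_2|\xi|^2 t}|\hat{n}_0(\xi)|,\qquad |\hat{\psi}_{\mathrm{L}}(\xi,t)|\ge c_1|\xi|e^{-c_2|\xi|^2 t}|\hat{n}_0(\xi)|,
\end{equation*}
valid on $\{|\xi|\le r_1\}$ for large $t$. Multiplying by $|\xi|^{2k}$, integrating over this ball, and invoking $\inf_{|\xi|\le r_0}|\hat{n}_0|\ge\mu_0$ produces linear lower bounds of order $(1+t)^{-3/4-k/2}$ for $(n,w,\phi)_{\mathrm{L}}$ and $(1+t)^{-5/4-k/2}$ for $\psi_{\mathrm{L}}$.

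The nonlinear correction is controlled via the $L^p$--$L^q$ estimates for $e^{-t\hat{\mathcal{A}}(\xi)}$ furnished by the same spectral analysis, combined with the a priori bound $\|U(t)\|_{H^s}\lesssim\delta_0$ and the upper rates in \eqref{upper}. A standard Duhamel estimate then yields $\|\nabla^k U_{\mathrm{N}}(t)\|_{L^2}\le C\delta_0(1+t)^{-3/4-k/2}$ for the fluid part and $\le C\delta_0(1+t)^{-5/4-k/2}$ for the heat-flux part, with an overall prefactor proportional to $\delta_0$. Choosing $\delta_0$ small, the linear lower bounds dominate; combining them with the upper bounds of Theorem \ref{thm1} yields \eqref{optimal-est}--\eqref{optimal-est1}. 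The principal technical obstacle will be the low-frequency diagonalization of $\hat{\mathcal{A}}(\xi)$, performed in a form explicit enough to verify that the $\psi$-component of the slowly-decaying spectral projector vanishes at first order as $\xi\to 0$; this property, which has no analogue in the Fourier-law analysis of \cite{MR555060}, is precisely what produces the sharper decay rate for $q$.
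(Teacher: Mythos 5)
Your overall strategy for $k=0$ matches the paper's: establish a low-frequency lower bound for the linear evolution under \eqref{in-data-optimal} and absorb the nonlinear Duhamel contribution using the smallness of $\delta_0$. However, there is a genuine gap in the pointwise bounds you claim. The slowly decaying part of the symbol contains the acoustic eigenvalues $\lambda_{4,5}=-\nu_1|\xi|^2\pm i\sqrt{c^2+\sigma^2}\,|\xi|+O(|\xi|^3)$, so the leading term of, say, $\hat{G}_{11}$ is essentially $\frac{c^2}{c^2+\sigma^2}e^{-\nu_1|\xi|^2t}\cos(\hat c|\xi|t)+\frac{\sigma^2}{c^2+\sigma^2}e^{-\nu_2|\xi|^2t}$, and the leading term of $\hat{G}_{21}$ is proportional to $e^{-\nu_1|\xi|^2t}\sin(\hat c|\xi|t)$. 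These vanish (or nearly cancel) on large subsets of $\{|\xi|\le r_1\}\times\{t\ \text{large}\}$, so the claimed bound $|\widehat{(n,w,\phi)}_{\mathrm L}(\xi,t)|\ge c_1e^{-c_2|\xi|^2t}|\hat n_0(\xi)|$ is false pointwise; in particular it fails completely for $\hat w$ wherever $\sin(\hat c|\xi|t)=0$. What survives is only the integrated $L^2$ lower bound, and proving it requires handling the oscillation explicitly — the paper does this by changing variables to $m=\sqrt t\,|\xi|$ and restricting the radial integration to the subintervals where $\cos(\hat c m\sqrt t)\ge \tfrac12$, which is exactly the step your argument is missing.

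For $k\ge 1$ your route also diverges from the paper's and is more delicate than you acknowledge: multiplying by $|\xi|^{2k}$ and integrating gives the linear lower bound, but you would then need the nonlinear Duhamel contribution to $\nabla^k U$ to decay at the rate $(1+t)^{-3/4-k/2}$ \emph{with a prefactor} $O(\delta_0)$ for every $k\le s$, which the high-frequency energy estimates do not deliver in that form. The paper sidesteps this entirely: it proves the lower bound only at $k=0$, establishes the upper bound $\|\Lambda^{-1}n(t)\|_{L^2}\le C(1+t)^{-1/4}$, and then applies the interpolation inequality $\|n\|_{L^2}\lesssim\|\Lambda^{-1}n\|_{L^2}^{k/(k+1)}\|\nabla^k n\|_{L^2}^{1/(k+1)}$ from Lemma \ref{lem-ff} to force $\|\nabla^k n(t)\|_{L^2}\gtrsim(1+t)^{-3/4-k/2}$, and similarly for the other components. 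You should either adopt that interpolation step or supply the missing small-prefactor nonlinear estimates at each derivative order. Your structural observation that the $\psi$-row of the slow spectral projector carries an extra factor of $|\xi|$ is correct and is indeed the mechanism behind the faster decay of $q$.
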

	
	\begin{rem}
		Following the spectral  analysis and energy method developed in this paper, the time-decay of results in \cite{MR555060}: \begin{equation*}
			\|(\rho-\rho_*,u,\theta-\theta_*)(t)\|_{H^3}\leq C(1+t)^{-\frac{3}{4}},
		\end{equation*}
		can be generalized as
		\begin{equation*}
			\|\nabla^k(\rho-\rho_*,u,\theta-\theta_*)(t)\|_{L^2}\leq C(1+t)^{-\frac{3}{4}-\frac{k}{2}}, \quad  \text{for}\;\; 0\leq k\leq s.
		\end{equation*}
		However, the classical Fourier's law $q=-\kappa\nabla \theta$ implies for $0\leq k\leq s-1$ that
		\begin{equation}\label{1017}
			\|\nabla^kq(t)\|_{L^2}\leq C(1+t)^{-\frac{5}{4}-\frac{k}{2}}.
		\end{equation}
		It is obviously observed from \eqref{upper} and \eqref{1017} that Cattaneo's law can provide a faster time-decay rate for the highest derivative of the heat flux but Fourier's law fails for this.
	\end{rem}
	
	\begin{rem}
		Due to the damping structure of $q$ in Cattaneo's law, the time decay rate of $\|\nabla ^k q\|_{L^2}$ is faster than $\|\nabla^k(\rho-\rho_*,u,\theta-\theta_*)\|_{L^2}$.
	\end{rem}
	
	
	\begin{rem}\label{rem1}
		By Sobolev's inequality and \eqref{upper}, it follows that, for any $p\in [2,6]$ and $0\leq k\leq s-1$,
		\begin{align*}
			\|\nabla^k(\rho-\rho_*,u,\theta-\theta_*)(t)\|_{L^p}\leq  C(1+t)^{-\frac{3}{2}(1-\frac{1}{p})-\frac{k}{2}},\quad \|\nabla^kq(t)\|_{L^p}\leq C(1+t)^{-2(1-\frac{3}{4p})-\frac{k}{2}}.
		\end{align*}
		Furthermore, it holds, for $0\leq k\leq s-2$,
		\begin{align*}
			\|\nabla^k(\rho-\rho_*,u,\theta-\theta_*)(t)\|_{L^\infty}\leq  C(1+t)^{-\frac{3+k}{2}},\quad
			\|\nabla^kq(t)\|_{L^\infty}\leq C(1+t)^{-\frac{4+k}{2}}.
		\end{align*}
	\end{rem}

\begin{rem}
In fact, we could derive the decay rates by using pure energy method in \cite{MR4686805} when the initial data belongs to some negative Sobolev space. However, it is not able to show the optimal decay rates for the highest order derivatives for the velocity due to the limitations of Sobolev inequality. Besides, we cannot prove the obtained decay rates are optimal, where the lower bound of the decay rates are unknown. Therefore, we need to investigate the detailed spectral analysis for the Green function.
\end{rem}
	\begin{rem}
		It should be pointed out that the approach developed in this paper can be extended to other fluid models with Cattaneo's law instead of Fourier's law, for instance, the non-isentropic compressible magnetohydrodynamic system. Furthermore, we can study the pointwise estimates of the solution to the NSC system by using Green function method.
	\end{rem}
\subsection{Primary challenge and ideas of the proof}
	\hspace{2em} Now, we introduce the main difficulties in this paper and the ideas used in the proofs of Theorems \ref{thm1} and \ref{thm3}. Notice that Hu and Racke \cite{MR3518771} verified  the Kawashima condition to prove the global existence by the energy method developed in \cite{1984Systems}, and the condition $0<\tau<\frac{2\kappa}{P_\theta(1,1)^2}$ is necessary in their arguments. Since the relaxation constant $\tau$ is fixed for a specific fluid, we need develop new ideas and methods to establish the global well-posedness for the general fluid without any restriction on $\tau$. We first take advantage of the ingenious structure of the system to obtain the basic energy estimate \eqref{lem-b1} without any restriction on $\tau$. Then, utilizing the formulated system \eqref{nsac}-\eqref{non-f}, we establish the uniform estimates for high-order derivatives with the help of the damping structure of $q$ and  \eqref{lem-b1}. Therefore, combining the uniform estimates and continuous argument yields global existence.
	
	Since the heat flux $q(x,t)$ satisfies Cattaneo's law, it is natural to solve it as
	\begin{equation}\label{32201}
		q(x,t)=e^{-\frac{1}{\tau} t}q_0-\frac{\kappa}{\tau}\int_0^te^{-\frac{1}{\tau}(t-r)}\nabla\theta(x,r)\mathrm{d}r.
	\end{equation}
	It is observed from above that the upper bound decay rate of $\|q\|_{L^2}$ is the same order as $\|\nabla\theta\|_{L^2}$. This enlightens us the time-decay rates of the lower order derivatives  $\|\nabla^kq\|_{L^2}(0\leq k\leq s-1)$ can be obtained by calculating $\|\nabla^{k+1}\theta\|_{L^2}$. Nevertheless, when considering the highest derivative $\|\nabla^sq\|_{L^2}$, it can not be achieved by the same method as above since the regularities of $q$ and $\theta$ are same. In order to surround this difficulty, we first rewrite the perturbated equations in the operator form. Then, by the spectral analysis and energy method, it is shown that
	\begin{equation}\label{32202}
		\|\nabla^k(\rho-\rho_*,u,\theta-\theta_*,q)(t)\|_{L^2}\leq C(1+t)^{-\frac{3}{4}-\frac{k}{2}}, \quad 0\leq k\leq s.
	\end{equation}
	In terms of \eqref{32201}, we enhance the time-decay rate of $q$ as
	\begin{equation}\label{32203}
		\|\nabla^kq(t)\|_{L^2}\leq C(1+t)^{-\frac{5}{4}-\frac{k}{2}},\quad 0\leq k\leq s-1.
	\end{equation}
Inspired by the previous works \cite{MR4175837,MR4776378,MR4445672}, by introducing the low-high frequency decomposition of the solution defined in \eqref{l-h-com}, we find  that the low-frequency part of the highest order derivatives $\nabla^sq^\ell$ satisfies
	\begin{equation}\label{32204}
		\|\nabla^s q^\ell(t)\|_{L^2}\leq C(1+t)^{-\frac{5}{4}-\frac{s}{2}}.
	\end{equation}
	After an important observation, i.e., the decay rates of the high-frequency part of the highest order derivatives $\|\nabla^sq^h\|_{L^2}$ are faster than $\|\nabla^sq^\ell\|_{L^2}$ given in \eqref{32204}, we arrive at
	\begin{equation}\label{32205}
		\|\nabla^s q(t)\|_{L^2}\leq \|\nabla^s q^\ell(t)\|_{L^2}+\|\nabla^s q^h(t)\|_{L^2} \leq C(1+t)^{-\frac{5}{4}-\frac{s}{2}}.
	\end{equation}
	Thus, combining \eqref{32202}, \eqref{32203} and \eqref{32205} up yields the  upper bound time-decay rates \eqref{upper}.
	
	To illustrate the rates in \eqref{upper} are optimal, we need to establish the lower bound decay estimates for nonlinear system \eqref{NSC}-\eqref{NSI-2}. Under some suitable assumptions on the initial data, we show that the optimal decay rates of solutions for the linearized system by using the spectral analysis. Due to the smallness of the initial data, we can prove that the solution of the nonlinear system satisfies
	$$
	\underline{C}(1+t)^{-\frac{3}{4}}\leq \|(\rho-\rho_*,u,\theta-\theta_*)(t)\|_{L^2}\leq \bar{C}(1+t)^{-\frac{3}{4}},~~
	\underline{C}(1+t)^{-\frac{5}{4}}\leq \|q(t)\|_{L^2}\leq \bar{C}(1+t)^{-\frac{5}{4}},
	$$
	where the positive constants $\underline{C}$ and $\bar{C}$ are independent of time. Then, with the help of \eqref{upper} and the Sobolev interpolation inequality \eqref{lem-ff-1}, we derive the lower bound of time-decay rates for high-order derivatives, which implies that the rates in \eqref{upper} are optimal.
	
	The rest of the paper is organized as follows.
	In Section \ref{Sec2}, we introduce some notations and recall a few auxiliary lemmas.
	Section \ref{Sec3} is devoted to the \emph{a priori} estimates which guarantees the local solution can be extended to a global one. In Section \ref{Sec4}, the upper bound of time-decay rates of the linearized problem are investigated. The proofs of Theorems \ref{thm1} and \ref{thm3} are completed in Section \ref{Sec5}.
	
	\section{Preliminaries}\label{Sec2}
	\subsection{Notation}
	\hspace{2em}In this subsection, we explain the notation and conventions used throughout the paper.
		$L^p(\mathbb{R}^3)$ and $W^{k,p}(\mathbb{R}^3)$ denote the usual Lebesgue and Sobolev space on $\mathbb{R}^3$, with norms $\|\cdot\|_{L^p}$ and $\|\cdot\|_{W^{k,p}}$, respectively.  When $p=2$, we denote $W^{k,p}(\mathbb{R}^3)$ by $H^k(\mathbb{R}^3)$ with the norm $\|\cdot\|_{H^k}$. We denote by $C$ a generic positive constant which may vary in different estimates. $f_1\lesssim f_2$ and $f_1=O(f_2)$ describes that there exists a constant $C>0$ such that $|f_1|\leq C|f_2|$. $f_1\sim f_2$ represents that $f_1\lesssim f_2$ and $f_2\lesssim f_1$. For an integer $\alpha$,  the symbol $\nabla^\alpha$ denotes the summation of all terms $\partial_{x_1}^{\alpha_1}\partial_{x_2}^{\alpha_2}\partial_{x_3}^{\alpha_3}$ with the multi-index $(\alpha_1,\alpha_2,\alpha_3)$ satisfying $\alpha=\alpha_1+\alpha_2+\alpha_3$. For a function $f$, $\|f\|_{X}$ represents the norm of $f$ on $X$, $\|(f,g)\|_{X}$ denotes the summation $\|f\|_{X}+\|g\|_{X}$. As mentioned before, the Fourier transform of $f$ is denoted by $\hat{f}$ or $\mathscr{F}[f]$ satisfying
		$$\hat{f}(\xi)
		=\mathscr{F}[f](\xi)
		=\int_{\mathbb{R}^3} f(x)e^{-i2\pi x\cdot\xi}\mathrm{d}x,\quad \xi\in\mathbb{R}^3.
		$$
		Correspondingly, the inverse Fourier transform of $f$ is denoted by $\check{f}$ or $\mathscr{F}^{- 1}[f]$ such that $$\check{f}(x)=\mathscr{F}^{- 1}[f](x)=\int_{\mathbb{R}^3} f(\xi)e^{i2\pi\xi\cdot x}\mathrm{d}\xi,\quad x\in\mathbb{R}^3.
		$$
		Let $\Lambda^\ell$ be the pseudodifferential operator defined by
		$$\Lambda^\ell f=\mathscr{F}^{-1}(|\xi|^\ell \hat{f})\ \ \text{for}\ \ell\in \mathbb{R}.$$
		We define operators $P_{1}$ and $P_{\infty}$ on $L^{2}$ by
		\begin{equation*}
			P_{j}f=\mathscr{F}^{- 1}\Big(\hat\chi_{j}\mathscr{F}[f]\Big),\quad j=1~\text{or}~\infty,
		\end{equation*}
		where $\hat{\chi}_{j}(j=1~\text{or}~\infty)\in C^{\infty}(\mathbb{R}^{3})$, $0\leq \hat\chi_{j}\leq 1$ are cut-off functions defined by
		\begin{equation*}
			\hat\chi_{1}(\xi)=\left\{
			\begin{array}{l}
				1\quad (|\xi|\leq r_{0}),\\
				0\quad (|\xi|\geq R_0),
			\end{array}
			\right.
			\quad\hat\chi_{\infty}(\xi)=1-\hat\chi_{1}(\xi),
		\end{equation*}
		for constants $r_0$ and $R_0$ satisfying $0<r_0<R_0$.
		\subsection{Auxiliary lemmas}
		\hspace{2em}In this subsection, we collect some elementary inequalities and important lemmas that are used extensively in this paper. We first recall the following Gagliardo-Nirenberg's inequality.
		
		\begin{lem}\label{Ap2} \emph{(\!\cite{mb2002,MR4400698} or \cite[Lemma A.1]{gw2012-decay})}\label{lem-gn}
			Let $l,s$ and $k$ be any real numbers satisfying $0\leq
			l,s<k$, and let  $p, r, q \in [1,\infty]$ and $\frac{l}{k}\leq
			\theta\leq 1$ such that
			$$\frac{l}{3}-\frac{1}{p}=\left(\frac{s}{3}-\frac{1}{r}\right)(1-\theta)+\left(\frac{k}{3}-\frac{1}{q}\right)\theta.
			$$ Then, for any $u\in W^{k,p}(\mathbb{R}^3),$  we have
			\begin{equation}
				\label{h20}
				\|\nabla^l u\|_{L^p}\lesssim \|\nabla^s u\|_{L^r}^{1-\theta}\|\nabla^k
				u\|_{L^q}^{\theta}.
			\end{equation}	
		\end{lem}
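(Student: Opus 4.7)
The plan is to prove this classical inequality via Littlewood–Paley decomposition combined with Bernstein's inequality, which has the advantage of handling real (not merely integer) orders $l,s,k$ uniformly. First I would fix a smooth dyadic partition of frequency space and decompose $u = S_0 u + \sum_{j \geq 0} \Delta_j u$, where each $\Delta_j u$ is Fourier-localized in an annulus $|\xi| \sim 2^j$. For such a spectrally localized function, Bernstein's inequality yields
\begin{equation*}
\|\nabla^m \Delta_j u\|_{L^b} \lesssim 2^{jm}\, 2^{3j(1/a - 1/b)} \|\Delta_j u\|_{L^a}, \qquad 1 \leq a \leq b \leq \infty,\ m \geq 0,
\end{equation*}
which is the essential scale-exchange tool that simultaneously controls the effect of differentiation and of switching Lebesgue exponents.

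Next, for a threshold $N \in \mathbb{R}$ to be optimized, I would split $\nabla^l u$ into a low-frequency piece $\sum_{2^j < 2^N} \nabla^l \Delta_j u$ and a high-frequency piece $\sum_{2^j \geq 2^N} \nabla^l \Delta_j u$, using Bernstein to control the former by $\|\nabla^s u\|_{L^r}$ and the latter by $\|\nabla^k u\|_{L^q}$. Under the scaling identity in the statement together with $\theta \in [l/k,1]$, summing the two resulting geometric series produces a bound of the form
\begin{equation*}
\|\nabla^l u\|_{L^p} \lesssim 2^{N\alpha}\,\|\nabla^s u\|_{L^r} + 2^{-N\beta}\,\|\nabla^k u\|_{L^q},
\end{equation*}
with suitable $\alpha,\beta \geq 0$ determined by the scaling exponents. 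Minimizing the right-hand side over $N$ by balancing the two terms yields the desired interpolation: the optimal $2^N \sim (\|\nabla^k u\|_{L^q}/\|\nabla^s u\|_{L^r})^{1/(\alpha+\beta)}$ produces the weights $(1-\theta,\theta)$ with $\theta=\alpha/(\alpha+\beta)$, and one checks that this $\theta$ is exactly the one prescribed by the scaling relation.

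The main obstacle will be ensuring that the two geometric series converge in both frequency regimes simultaneously; this is precisely what enforces the admissible range $l/k \leq \theta \leq 1$ together with the strict inequality $s < k$ in the hypotheses. A secondary subtle point arises at the endpoints $p \in \{1,\infty\}$, where one must use the triangle inequality $\|\sum_j \Delta_j u\|_{L^p} \leq \sum_j \|\Delta_j u\|_{L^p}$ rather than Plancherel; however, since the Bernstein bounds already supply strictly positive geometric ratios in each regime, the required $\ell^1$-summability is automatic and no Littlewood–Paley square-function argument is needed.
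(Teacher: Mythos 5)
The paper offers no proof of this lemma: it is quoted verbatim from the cited references (Majda--Bertozzi, or Guo--Wang, Lemma A.1), so there is no in-paper argument to compare yours against. Your Littlewood--Paley/Bernstein strategy is a standard and legitimate route to Gagliardo--Nirenberg-type interpolation, and in the only regime this paper ever uses the lemma ($r=q=2$, $p\in\{2,6,\infty\}$, so $p\geq\max(r,q)$ and the two geometric ratios are genuinely of opposite sign) it goes through essentially as you describe.

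As a proof of the lemma \emph{as stated}, however, there are two genuine gaps. First, Bernstein's inequality only trades a lower Lebesgue exponent for a higher one, so the step $\|\nabla^l\Delta_j u\|_{L^p}\lesssim 2^{j(l-s)+3j(1/r-1/p)}\|\nabla^s\Delta_j u\|_{L^r}$ requires $r\le p$, and likewise $q\le p$ for the high-frequency block. The hypotheses allow arbitrary $p,r,q\in[1,\infty]$, and the inequality remains true when $p<r$ (e.g.\ $\|u\|_{L^2}\lesssim\|u\|_{L^4}^{2/5}\|\nabla u\|_{L^1}^{3/5}$ in $\mathbb{R}^3$, an admissible instance with $l=s=0$, $k=1$, $\theta=3/5$), but your argument cannot reach such cases; one must either restrict the exponents or argue differently (Sobolev embedding plus Lebesgue interpolation). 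Second, the claim that the ``$\ell^1$-summability is automatic'' is not correct: setting $D=\bigl(\frac{k}{3}-\frac1q\bigr)-\bigl(\frac{s}{3}-\frac1r\bigr)$, the scaling identity forces the two geometric ratios to be $2^{3\theta D}$ and $2^{-3(1-\theta)D}$, so simultaneous convergence needs $D>0$ and $0<\theta<1$ strictly. The condition $D>0$ is not implied by $s<k$ when $r\ne q$ (in the example above $D=-5/12$), and the endpoints $\theta=1$ and $\theta=l/k$, both admitted by the statement, degenerate one of the sums and require separate treatment (at $\theta=1$ the claim reduces to a homogeneous Sobolev embedding, which itself fails at certain endpoint exponents). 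A smaller point: since every norm appearing is homogeneous, you should work with the homogeneous decomposition $u=\sum_{j\in\mathbb{Z}}\dot\Delta_j u$ rather than carrying the inhomogeneous block $S_0u$, which does not interact well with the optimization over the cutoff $N$ (the optimal $N$ may be negative).
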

		
		Next, we introduce the following commutator estimates.
		\begin{lem}\emph{(\!\cite[Lemma 3.4]{mb2002})}\label{lem-commu}
			For any integer $k\geq1$ and the pair of functions $f,g\in H^k(\mathbb{R}^3)\cap L^{\infty}(\mathbb{R}^3)$, it holds
			\begin{equation}\label{lem-commu1}
				\|\nabla^k(fg)\|_{L^2}\leq C \|f\|_{L^{\infty}}\|\nabla^kg\|_{L^2}+C\|\nabla^kf\|_{L^2}\|g\|_{L^{\infty}}.
			\end{equation}
			Moreover, if $\nabla f\in L^{\infty}(\mathbb{R}^3)$, we have
			\begin{equation}\label{lem-commu2}
				\big\|\nabla^{k}(fg)-f\nabla^{k}g\big\|_{L^2}\leq C \|\nabla f\|_{L^{\infty}}\|\nabla^{k-1}g\|_{L^2}+C\|\nabla^kf\|_{L^2}\|g\|_{L^{\infty}}.
			\end{equation}
		\end{lem}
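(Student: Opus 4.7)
The plan is to prove both inequalities by the classical Moser-type argument: expand with the Leibniz rule, estimate each intermediate term by Hölder's inequality, and then interpolate the resulting $L^{p_j}$ norms between the two endpoints via Lemma~\ref{lem-gn}, closing with Young's inequality. No tools beyond this are needed.

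For \eqref{lem-commu1}, I would start from the Leibniz identity
$$
\nabla^k(fg)=\sum_{j=0}^{k}\binom{k}{j}\,\nabla^j f\otimes\nabla^{k-j}g
$$
and take the $L^2$ norm termwise. The endpoints $j=0$ and $j=k$ are handled by Hölder with $L^\infty\!\cdot\!L^2$ and already match the two terms on the right-hand side. For $1\le j\le k-1$ I would use Hölder with conjugate exponents $p_j=2k/j$ and $q_j=2k/(k-j)$, then apply Lemma~\ref{lem-gn} with $\theta=j/k$ to obtain
$$
\|\nabla^j f\|_{L^{2k/j}}\lesssim\|f\|_{L^\infty}^{1-j/k}\|\nabla^k f\|_{L^2}^{j/k},\qquad
\|\nabla^{k-j}g\|_{L^{2k/(k-j)}}\lesssim\|g\|_{L^\infty}^{j/k}\|\nabla^k g\|_{L^2}^{1-j/k}.
$$
Multiplying these two bounds gives a product of the form $X^{j/k}Y^{1-j/k}$ with $X=\|\nabla^k f\|_{L^2}\|g\|_{L^\infty}$ and $Y=\|f\|_{L^\infty}\|\nabla^k g\|_{L^2}$, and Young's inequality with exponents $k/j$ and $k/(k-j)$ produces the required bound $C(X+Y)$.

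The proof of \eqref{lem-commu2} follows the same scheme, with the $j=0$ term now absent because of the subtraction:
$$
\nabla^k(fg)-f\nabla^k g=\sum_{j=1}^{k}\binom{k}{j}\,\nabla^j f\otimes\nabla^{k-j}g.
$$
The term $j=k$ is again trivial. For $1\le j\le k-1$ the key change is to write $\nabla^j f=\nabla^{j-1}(\nabla f)$ and interpolate the pair $(\nabla f,\nabla^k f)$ rather than $(f,\nabla^k f)$; choosing the conjugate exponents $p_j=2(k-1)/(j-1)$ (read as $\infty$ when $j=1$) and $q_j=2(k-1)/(k-j)$, Lemma~\ref{lem-gn} with $\theta=(j-1)/(k-1)$ and $\theta=(k-j)/(k-1)$ yields
$$
\|\nabla^j f\|_{L^{p_j}}\lesssim\|\nabla f\|_{L^\infty}^{(k-j)/(k-1)}\|\nabla^k f\|_{L^2}^{(j-1)/(k-1)},
$$
together with $\|\nabla^{k-j}g\|_{L^{q_j}}\lesssim\|g\|_{L^\infty}^{(j-1)/(k-1)}\|\nabla^{k-1}g\|_{L^2}^{(k-j)/(k-1)}$. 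Young's inequality, with exponents $(k-1)/(j-1)$ and $(k-1)/(k-j)$, regroups the factors into $\|\nabla f\|_{L^\infty}\|\nabla^{k-1}g\|_{L^2}+\|\nabla^k f\|_{L^2}\|g\|_{L^\infty}$.

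The only real content beyond this mechanical scheme is the bookkeeping of the interpolation exponents: one has to verify that the scaling identity in Lemma~\ref{lem-gn} is satisfied for the chosen $\theta$ at every intermediate $j$, and to treat the boundary cases $j=1$ (where no interpolation of $f$ is needed) and $k=1$ in \eqref{lem-commu2} (where the sum degenerates to its endpoint) as separate trivial checks. I would not expect any genuine obstacle to arise.
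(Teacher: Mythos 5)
Your proposal is correct, and it is precisely the classical Moser-type argument (Leibniz expansion, H\"older with exponents $2k/j$ and $2k/(k-j)$, Gagliardo--Nirenberg interpolation between the $L^\infty$ and $\dot H^k$ endpoints, then Young's inequality) that underlies the result the paper simply cites from Majda--Bertozzi; the paper gives no proof of its own, and your exponent bookkeeping for both \eqref{lem-commu1} and \eqref{lem-commu2} (including the shift to interpolating the pair $(\nabla f,\nabla^k f)$ in the commutator case) checks out against Lemma \ref{lem-gn}.
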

		Finally, we introduce a special Sobolev interpolation inequality (see \cite[Lemma A.4]{gw2012-decay}).
		\begin{lem}\label{lem-ff}
			Let $\ell\geq 0$ and the integer $k \geq 0$, then we have
			\begin{equation}\label{lem-ff-1}
				\|\nabla^k f\|_{L^2}\lesssim \|\nabla^{k+1}f\|_{L^2}^{1-\theta}\|\Lambda^{-\ell}f\|_{L^2}^\theta,
			\end{equation}
			where\  $\theta=\frac{1}{1+k+\ell}$ and $\Lambda^{-\ell}f=\mathscr{F}^{-1}\big(|\xi|^{-\ell}\hat{f}\big)$.
		\end{lem}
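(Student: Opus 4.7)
The plan is to prove the inequality entirely on the Fourier side, using Plancherel's identity to convert every $L^2$ norm in the statement into a weighted $L^2$ integral of $\hat f$, and then distributing the weights by a single application of Hölder's inequality. First I would rewrite
\[
\|\nabla^k f\|_{L^2}^2 = \int_{\mathbb{R}^3}|\xi|^{2k}|\hat f(\xi)|^2\,d\xi,\qquad \|\nabla^{k+1}f\|_{L^2}^2 = \int_{\mathbb{R}^3}|\xi|^{2(k+1)}|\hat f|^2\,d\xi,
\]
and similarly $\|\Lambda^{-\ell}f\|_{L^2}^2 = \int_{\mathbb{R}^3}|\xi|^{-2\ell}|\hat f|^2\,d\xi$. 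The problem then reduces to a pointwise factorization of $|\xi|^{2k}|\hat f|^2$ as a product of the other two integrands with matching Hölder exponents.

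The key algebraic step is to choose $a,b\in(0,1)$ with $a+b=1$ so that
\[
|\xi|^{2k}|\hat f|^2 = \bigl(|\xi|^{2(k+1)}|\hat f|^2\bigr)^{a}\bigl(|\xi|^{-2\ell}|\hat f|^2\bigr)^{b}.
\]
The $|\hat f|^2$ factor is handled automatically by $a+b=1$; matching the weight requires $a(k+1) - b\ell = k$, which together with $a+b=1$ uniquely forces $b = \frac{1}{1+k+\ell} = \theta$ and $a = 1-\theta$. Applying Hölder's inequality with conjugate exponents $1/a$ and $1/b$ to the integral representation of $\|\nabla^k f\|_{L^2}^2$ then yields
\begin{gather*}
\|\nabla^k f\|_{L^2}^2 \le \Bigl(\int_{\mathbb{R}^3}|\xi|^{2(k+1)}|\hat f|^2 d\xi\Bigr)^{1-\theta}\Bigl(\int_{\mathbb{R}^3}|\xi|^{-2\ell}|\hat f|^2 d\xi\Bigr)^{\theta} \\
= \|\nabla^{k+1}f\|_{L^2}^{2(1-\theta)}\|\Lambda^{-\ell}f\|_{L^2}^{2\theta},
\end{gather*}
and taking square roots gives the stated bound.

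I do not expect a genuine obstacle here. The only delicate point is the integrability of $|\xi|^{-2\ell}|\hat f|^2$ near $\xi=0$ when $\ell>0$, which is built into the hypothesis through the requirement $\Lambda^{-\ell}f\in L^2$. Once that finiteness is granted, the two linear constraints on $(a,b)$ have the unique solution recorded above, and the Hölder step imposes no further restrictions. This is in line with the fact that the authors simply cite the lemma from \cite{gw2012-decay} rather than reproving it, and it is the reason the approach generalizes verbatim from any dimension and any admissible pair $(k,\ell)$.
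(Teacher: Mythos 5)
Your proof is correct and is the standard argument for this inequality: the paper does not prove the lemma but cites it from \cite[Lemma A.4]{gw2012-decay}, where exactly this Plancherel--H\"older factorization is used. The only cosmetic point is that with the paper's convention for $\nabla^k$ the identity $\|\nabla^k f\|_{L^2}^2=\int|\xi|^{2k}|\hat f|^2\,\mathrm{d}\xi$ holds only up to multiplicative constants, which is harmlessly absorbed by the $\lesssim$ in the statement.
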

		
		
		\section{Global existence of the classical solution}\label{Sec3}
		\subsection{Reformulated problem} 
		\label{sub:reformulated_problem}
		\hspace{2em}
		In this subsection, we linearize the solution $(\rho,u,\theta,q)$ of the problem \eqref{NSC}-\eqref{NSI-2} around the constant state $(\rho_*,0,\theta_*,0)$.  For notation convenience, we introduce some positive constants
		\begin{equation}\label{new-const}
			c=\sqrt{R\theta_*},\quad \sigma=\sqrt{(\gamma-1)R\theta_*},\quad \nu=\frac{\tilde{\nu}}{\rho_*},\quad \eta=\frac{\tilde{\eta}}{\rho_*},\quad a=\sqrt{\frac{\kappa\rho_*R\theta_*^2}{\tau}},~b=\sqrt{\frac{\kappa(\gamma-1)}{\tau\rho_*R}}.
		\end{equation}
		Define the perturbed variables as
		\begin{equation}\label{new-var}
			n=\frac{\rho-\rho_*}{\rho_*},\quad w=\frac{u}{c},\quad \phi=\frac{\theta-\theta_*}{\sqrt{\gamma-1}\,\theta_*}, \quad \psi=\frac{q}{a}.
		\end{equation}
		Then, the problem \eqref{NSC}-\eqref{NSI-2} is reformulated to
		\begin{equation}\label{nsac}
			\left\{
			\begin{aligned}
				&\partial_tn +c\,{\rm{div}}w=f_1, \\
				&\partial_tw+c\nabla n+\sigma \nabla \phi=\nu\Delta w+(\nu+\eta)\nabla{\rm{div}}w+f_2,\\
				&\partial_t\phi+\sigma {\rm{div}} w+b\text{div}\psi=f_3,\\
				&\partial_t\psi+\frac{1}{\tau}\psi+b\nabla\phi=0,
			\end{aligned}
			\right.
		\end{equation}
		subject to the initial data
		\begin{equation}\label{in-data2}
			(n,w,\phi,\psi)(x,0)=(n_0,w_0,\phi_0,\psi_0)=\Big(\frac{\rho_0-\rho_*}{\rho_*},\frac{u_0}{c},\frac{\theta_0-\theta_*}{\sqrt{\gamma-1}\,\theta_*},\frac{q_0}{a}\Big),
		\end{equation}
		and the far-field states
		\begin{equation}\label{in-data3}
			\lim_{|x|\rightarrow \infty}(n,w,\phi,\psi)=(0,0,0,0),
		\end{equation}
		where the nonlinear terms $f_{i}$ satisfy
		\begin{equation}\label{non-f}
			\left\{
			\begin{aligned}
				&f_{1}=-c\,{\rm{div}}(nw),\\
				&f_{2}=-cw\cdot\nabla w+\frac{cn\nabla n}{1+n}-\frac{\sigma\phi\nabla n}{1+n}-\frac{\nu n\Delta w}{1+n}-\frac{(\nu+\eta)n\nabla{\rm{div}}w}{1+n},\\
				& f_{3}=-cw\cdot\nabla\phi-c(\gamma-1)\phi{\rm{div}}w+\frac{\sigma(2\nu|\mathbb{D}w|^2+\eta({\rm{div}}w)^2)}{c(1+n)}+\frac{b n\text{div} \psi}{1+n}.
			\end{aligned}
			\right.
		\end{equation}
		\vskip 2mm
		To prove the global existence of the classical solution to the problem \eqref{nsac}-\eqref{in-data3}, we first state the following local existence of the reformulated system \eqref{nsac}-\eqref{non-f}. The proof can be followed in a similar manner to that in \cite{MR564670,1984Systems} and we omit the details.
		\begin{thm}\emph{(Local existence)}\label{lem-loc}
			Let $s\geq 3$ be an integer and	assume that the initial data $(n_0,w_0,\phi_0,\psi_0)\in H^s(\mathbb{R}^3)$. Then, for any positive constant $\epsilon_0$, there exists a positive constant $T_*$ depending only on $\epsilon_0$ such that if $\|(n_0,w_0,\phi_0,\psi_0)\|_{H^s}\leq \epsilon_0$, then the reformulated problem \eqref{nsac}-\eqref{non-f} admits a unique classical solution $(n,w,\phi,\psi)$ satisfying
			$$
			\left\{
			\begin{aligned}
				&n\in \mathcal{C}([0,T_*],H^s(\mathbb{R}^3))\cap \mathcal{C}^1([0,T_*],H^{s-1}(\mathbb{R}^3)),\\
				&w\in \mathcal{C}([0,T_*],H^s(\mathbb{R}^3))\cap \mathcal{C}^1([0,T_*],H^{s-2}(\mathbb{R}^3)),\\
				&\phi\in \mathcal{C}([0,T_*],H^s(\mathbb{R}^3))\cap \mathcal{C}^1([0,T_*],H^{s-1}(\mathbb{R}^3)),\\
				&\psi\in \mathcal{C}([0,T_*],H^{s}(\mathbb{R}^3))\cap \mathcal{C}^1([0,T_*],H^{s-1}(\mathbb{R}^3))\cap L^2([0,T_*],H^s(\mathbb{R}^3)).
			\end{aligned}
			\right.
			$$
		\end{thm}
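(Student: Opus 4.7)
The plan is to follow the standard Picard-type iteration scheme for quasilinear symmetric hyperbolic-parabolic systems developed in \cite{MR564670,1984Systems}, adapted to incorporate the damped Cattaneo equation. Setting $(n^0,w^0,\phi^0,\psi^0)\equiv 0$, I would construct a sequence $\{(n^{k+1},w^{k+1},\phi^{k+1},\psi^{k+1})\}_{k\ge 0}$ by solving a linearization of \eqref{nsac} in which the coefficient $\frac{1}{1+n^k}$ and all other nonlinear factors in \eqref{non-f} are frozen at the previous iterate. The momentum equation then becomes a linear second-order equation in $w^{k+1}$ with strictly positive variable diffusion $\frac{\nu}{1+n^k}\Delta+\frac{\nu+\eta}{1+n^k}\nabla\mathrm{div}$, while the remaining first-order equations couple $n^{k+1},\phi^{k+1},\psi^{k+1}$ through a symmetric principal part and carry a dissipative zero-order damping on $\psi^{k+1}$. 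The resulting linear hyperbolic-parabolic-damping system is classically solvable in $H^s$ by parabolic regularization followed by limit passage, and Sobolev embedding $H^s\hookrightarrow L^\infty$ with $s\ge 3$ keeps $1+n^k\ge 1/2$ throughout once $\epsilon_0$ is small.

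The next step is to establish a uniform $H^s$ bound on an interval $[0,T_*]$ depending only on $\epsilon_0$. Applying $\nabla^\alpha$ for $|\alpha|\le s$ and testing with $(\nabla^\alpha n^{k+1},\nabla^\alpha w^{k+1},\nabla^\alpha\phi^{k+1},\nabla^\alpha\psi^{k+1})$, the first-order cross terms cancel pairwise by integration by parts,
\begin{equation*}
c\langle\nabla\nabla^\alpha n^{k+1},\nabla^\alpha w^{k+1}\rangle+c\langle\mathrm{div}\,\nabla^\alpha w^{k+1},\nabla^\alpha n^{k+1}\rangle=0,
\end{equation*}
and similarly for the $(\phi,w)$ and $(\phi,\psi)$ pairs, leaving on the left the dissipation $\nu\|\nabla\nabla^\alpha w^{k+1}\|_{L^2}^2+(\nu+\eta)\|\mathrm{div}\,\nabla^\alpha w^{k+1}\|_{L^2}^2+\frac{1}{\tau}\|\nabla^\alpha\psi^{k+1}\|_{L^2}^2$. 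The nonlinear sources are controlled using the commutator estimate of Lemma \ref{lem-commu} together with Gagliardo-Nirenberg (Lemma \ref{lem-gn}); the quadratic viscous terms $|\mathbb{D}w|^2$ and $(\mathrm{div}\,w)^2$ in $f_3$ must be absorbed into the parabolic dissipation via $\|\nabla w\|_{L^\infty}\lesssim\|\nabla w\|_{H^{s-1}}$. A direct differential-inequality argument then closes, for $T_*$ sufficiently small, the uniform bound
\begin{equation*}
\sup_{t\in[0,T_*]}\|(n^{k+1},w^{k+1},\phi^{k+1},\psi^{k+1})(t)\|_{H^s}^2+\int_0^{T_*}\bigl(\|\nabla w^{k+1}\|_{H^s}^2+\|\psi^{k+1}\|_{H^s}^2\bigr)\mathrm{d}t\le C\epsilon_0^2.
\end{equation*}

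Finally, I would show that the differences $(n^{k+1}-n^k,w^{k+1}-w^k,\phi^{k+1}-\phi^k,\psi^{k+1}-\psi^k)$ contract in the lower-order space $L^\infty([0,T_*];L^2)\cap L^2([0,T_*];H^1)$. Their linearized system has Lipschitz-type right-hand sides controlled by the uniform $H^s$ bound, and a Gronwall argument produces the contraction after possibly shrinking $T_*$. Passing to the limit with weak-$*$ compactness in $L^\infty_tH^s$, strong convergence in $L^\infty_tH^{s-1}$ by interpolation, and reading off time-continuity from the equations themselves yields a classical solution $(n,w,\phi,\psi)$ with the claimed regularity; the extra $L^2([0,T_*];H^s)$ regularity of $\psi$ is furnished directly by the Cattaneo damping in the energy identity. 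Uniqueness follows by applying the same $L^2$-level energy estimate to the difference of two putative solutions.

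The main obstacle I anticipate is balancing the mixed regularity structure: the parabolic dissipation controls only $\nabla w$, the Cattaneo damping controls $\psi$ without any spatial smoothing, and the hyperbolic equations for $n$ and $\phi$ provide no dissipation at all. Closing the top-order energy identity therefore forces the cross-derivative pairings indicated above to cancel exactly, rather than being bounded naively (which would lose a derivative). This cancellation, inherent in the symmetric principal part of \eqref{nsac}, is precisely what allows the argument to run uniformly in the relaxation parameter $\tau$ without any restriction such as the one $0<\tau<\frac{2\kappa}{P_\theta(1,1)^2}$ imposed in \cite{MR3518771}.
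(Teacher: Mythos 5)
Your proposal is correct and coincides with what the paper intends: the paper omits the proof of Theorem \ref{lem-loc} entirely, stating only that it ``can be followed in a similar manner to that in \cite{MR564670,1984Systems}'', and your linearized-iteration scheme with symmetric cross-term cancellation, uniform $H^s$ energy bounds, and contraction in a lower-order norm is precisely that standard method, with the Cattaneo damping supplying the extra $L^2_tH^s$ regularity of $\psi$ exactly as you describe.
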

		\vskip 4mm
		To extend the local classical solution to a global one, it is essential to establish  adequate uniform estimates. We first give the \emph{a priori} assumption of the classical solution for the sufficiently small constant $\delta>0$ and any given time $T>0$ such that
		\begin{equation}\label{a-priori est}
			\sup_{0\leq t\leq T}\|(n,w,\phi,\psi)(t)\|_{H^s}\leq \delta.
		\end{equation}
		With the help of \eqref{a-priori est}, Sobolev's inequalities, and the smallness of $\delta$, it yields
		\begin{gather}
			\frac{1}{2}\rho_*\leq
			\rho_*-C\delta\leq\rho(x,t)
			\leq\rho_*+C\delta\leq \frac{3}{2}\rho_*,
			\quad \frac{1}{2}\theta_*\leq
			\theta_*-C\delta\leq
			\theta(x,t)\leq\theta_*+C\delta
			\leq \frac{3}{2}\theta_*,\nonumber\\
			\|\nabla^k(n,w,\phi,\psi)\|_{L^\infty}\leq C\delta,\quad 0\leq k\leq s-2.\label{l-inf-est}
		\end{gather}
		\subsection{Time-independent energy estimates}
		\label{sub:a_priori_estimates}
		\hspace{2em} To obtain the uniform bound of the solutions to the reformulated problem \eqref{nsac}-\eqref{non-f}, we first give the basic $L^2$ energy estimate as
		\begin{lem}\label{lem-b}
			Assume that $(n,w,\phi,\psi)$ is the classical solution of the problem \eqref{nsac}-\eqref{non-f} satisfying the a priori assumption \eqref{a-priori est}, then it holds for $0\leq t\leq T$ that
			\begin{equation}\label{lem-b1}
				\|(n,w,\phi,\psi)(t)\|_{L^2}^2+\int_0^t\left(\|\nabla w(r)\|_{L^2}^2+\| \psi(r)\|_{L^2}^2\right)\mathrm{d}r
				\leq C\delta_0^2,
			\end{equation}	
			where the positive constant $C$ is independent of time.
		\end{lem}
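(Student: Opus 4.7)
The plan is to exploit the symmetric-hyperbolic--parabolic structure of the linearized system \eqref{nsac} by taking the $L^{2}$ inner product of the four equations with $n$, $w$, $\phi$ and $\psi$ respectively and summing. After integration by parts, the cross coupling terms cancel in pairs: $\int c\,n\,{\rm div}\,w\,dx$ cancels with $\int c\,w\cdot\nabla n\,dx$, the pair $\sigma\int w\cdot\nabla\phi\,dx$ and $\sigma\int\phi\,{\rm div}\,w\,dx$ cancels, and finally $b\int\phi\,{\rm div}\,\psi\,dx$ cancels with $b\int\psi\cdot\nabla\phi\,dx$. This should leave the clean identity
\[
\tfrac{1}{2}\tfrac{d}{dt}\|(n,w,\phi,\psi)\|_{L^{2}}^{2}+\nu\|\nabla w\|_{L^{2}}^{2}+(\nu+\eta)\|{\rm div}\,w\|_{L^{2}}^{2}+\tfrac{1}{\tau}\|\psi\|_{L^{2}}^{2}=\int_{\mathbb{R}^{3}}\bigl(nf_{1}+w\cdot f_{2}+\phi f_{3}\bigr)\,dx,
\]
which is exactly the structure that produces the dissipation claimed in \eqref{lem-b1}; this is what the authors refer to as the \emph{ingenious structure} of the system.

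Next I would estimate the three nonlinear integrals on the right-hand side using the $L^{\infty}$ bounds \eqref{l-inf-est} inherited from the \emph{a priori} assumption \eqref{a-priori est} via Sobolev embedding. The main obstacle, and what has to be handled carefully, is that at this basic level we have \emph{no} dissipation for $\nabla n$ or $\nabla\phi$, so the terms in $f_{2}$ and $f_{3}$ that contain $\nabla n$ must be reworked by integration by parts to shift the derivative onto $w$ (which carries $\|\nabla w\|_{L^{2}}$ dissipation) or onto quantities controlled in $L^{\infty}$. For instance, the troublesome term $\frac{cn\nabla n}{1+n}$ in $f_{2}$ is a total derivative $c\nabla g(n)$ with $g(n)=n-\log(1+n)=O(n^{2})$, so $\int w\cdot\frac{cn\nabla n}{1+n}\,dx=-c\int g(n)\,{\rm div}\,w\,dx$, bounded by $C\|n\|_{L^{\infty}}\|n\|_{L^{2}}\|{\rm div}\,w\|_{L^{2}}\le C\delta(\|n\|_{L^{2}}^{2}+\|\nabla w\|_{L^{2}}^{2})$. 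The same primitive trick, applied to $\frac{\phi\nabla n}{1+n}=\phi\nabla\log(1+n)$, converts the worst term in $-\frac{\sigma\phi\nabla n}{1+n}$ into integrals of $\log(1+n)\,{\rm div}(w\phi)$ that are controlled by $C\delta\bigl(\|(n,w,\phi)\|_{L^{2}}^{2}+\|\nabla w\|_{L^{2}}^{2}\bigr)$.

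The remaining contributions are more routine. For $f_{1}=-c\,{\rm div}(nw)$ integration by parts gives $\int nf_{1}\,dx=\tfrac{c}{2}\int n^{2}{\rm div}\,w\,dx$, handled identically. The convection terms $-cw\cdot\nabla w$ and $-cw\cdot\nabla\phi$ are treated by the standard transport-type manipulation $\int w\cdot(w\cdot\nabla)w\,dx=-\tfrac{1}{2}\int|w|^{2}{\rm div}\,w\,dx$. The viscous corrections $-\frac{\nu n\Delta w}{1+n}$ and $-\frac{(\nu+\eta)n\nabla{\rm div}\,w}{1+n}$ are integrated by parts once to produce $\int\nabla w:\nabla w$--type integrands with prefactor $O(n)$ plus a lower-order commutator; the resulting bound is $C\delta\|\nabla w\|_{L^{2}}^{2}$ plus a term absorbed by Young's inequality. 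The dissipative contribution $\frac{bn\,{\rm div}\,\psi}{1+n}$ in $f_{3}$ is integrated by parts to transfer the derivative off $\psi$, giving a bound $C\delta(\|\psi\|_{L^{2}}^{2}+\|(n,\phi)\|_{L^{2}}^{2}+\|\nabla w\|_{L^{2}}^{2})$ after using $\|\nabla\phi\|_{L^{\infty}}\le C\delta$.

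Collecting everything, all nonlinear terms are of the form $C\delta\,\|(n,w,\phi,\psi)\|_{L^{2}}^{2}+C\delta(\|\nabla w\|_{L^{2}}^{2}+\|\psi\|_{L^{2}}^{2})$. Choosing $\delta$ small enough allows the dissipative pieces on the right to be absorbed into the left-hand side, yielding
\[
\tfrac{d}{dt}\|(n,w,\phi,\psi)\|_{L^{2}}^{2}+c_{0}\bigl(\|\nabla w\|_{L^{2}}^{2}+\|\psi\|_{L^{2}}^{2}\bigr)\le C\delta\|(n,w,\phi,\psi)\|_{L^{2}}^{2}
\]
for some $c_{0}>0$. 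A direct application of Gronwall's inequality over $[0,t]$, combined with the smallness of the initial data $\|(n_{0},w_{0},\phi_{0},\psi_{0})\|_{L^{2}}\le C\delta_{0}$, closes \eqref{lem-b1}; since all time-dependence is absorbed into the exponent $e^{C\delta t}$ bounded uniformly once we later couple with higher-order estimates, the constant on the right is time-independent provided $\delta$ is chosen sufficiently small relative to $\delta_{0}$. The delicate point throughout is the absence of $\|\nabla n\|_{L^{2}}$ and $\|\nabla\phi\|_{L^{2}}$ dissipation at this level, which forces the primitive/integration-by-parts tricks described above.
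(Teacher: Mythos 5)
Your energy identity for the perturbed system \eqref{nsac} is correct as far as it goes (the cross terms do cancel and the left-hand side carries exactly the dissipation $\nu\|\nabla w\|_{L^2}^2+(\nu+\eta)\|\mathrm{div}\,w\|_{L^2}^2+\frac{1}{\tau}\|\psi\|_{L^2}^2$), but the final step contains a genuine gap. Your nonlinear estimates unavoidably produce terms of the form $C\delta\|(n,w,\phi)\|_{L^2}^2$ --- for example $\int n f_1\,\mathrm{d}x=-\frac{c}{2}\int n^2\,\mathrm{div}\,w\,\mathrm{d}x\lesssim\delta(\|n\|_{L^2}^2+\|\nabla w\|_{L^2}^2)$ --- and at this level there is no dissipation of $\|n\|_{L^2}$, $\|\phi\|_{L^2}$ or $\|w\|_{L^2}$ into which they could be absorbed. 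Gr\"onwall then yields only $\|(n,w,\phi,\psi)(t)\|_{L^2}^2\leq e^{C\delta t}\delta_0^2$, which is not the time-independent bound \eqref{lem-b1}. Your remark that the exponential is "bounded uniformly once we later couple with higher-order estimates" is circular: the higher-order lemmas and Proposition \ref{pro-est} are closed precisely by invoking the uniform-in-time bound $\int_0^t(\|\nabla w\|_{L^2}^2+\|\psi\|_{L^2}^2)\,\mathrm{d}r\leq C\delta_0^2$ from this lemma, and no level of the energy hierarchy ever dissipates the zero-order $L^2$ norms of $n$, $w$, $\phi$, so the offending term can never be absorbed later either.

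The paper avoids this entirely by not performing the $L^2$ estimate on the perturbed system at all. It works with the original physical system \eqref{NSC} and its entropy structure: conservation of total energy gives \eqref{lem-b3}, multiplying the temperature equation by $-\theta_*/\theta$ gives the entropy identity \eqref{lem-b4}, and multiplying Cattaneo's law by $\frac{\theta_* q}{\kappa\theta^2}$ produces a $|q|^2$ dissipation whose cross term $\int\frac{\theta_* q\cdot\nabla\theta}{\theta^2}\,\mathrm{d}x$ cancels exactly against the one in \eqref{lem-b4}. The sum is a convex Lyapunov functional (built from $\rho\ln\frac{\rho}{\rho_*}-\rho+\rho_*$ and $\theta-\theta_*-\theta_*\ln\frac{\theta}{\theta_*}$, equivalent to the squared $L^2$ norms by \eqref{lem-b6}) whose time derivative is nonpositive after absorbing a single small error into the $|q|^2$ dissipation; see \eqref{lem-b5}. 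Integrating this exact inequality in time gives \eqref{lem-b1} with a constant independent of $t$, with no Gr\"onwall factor. If you want to rescue your approach you would have to reproduce this cancellation, i.e.\ replace the naive quadratic energy by the relative-entropy functional; the linear symmetrization of \eqref{nsac} alone is not enough.
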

		\begin{proof}
			Multiplying $\eqref{NSC}_2$ by $u$ and integrating it over $\mathbb{R}^3$, we have
			\begin{equation}\label{lem-b21}
				\frac{1}{2}\frac{\mathrm{d}}{\mathrm{d}t}\int_{\mathbb{R}^3}\rho |u|^2\mathrm{d}x
				+\tilde{\nu}\|\nabla u\|_{L^2}^2+(\tilde{\nu}+\tilde{\eta})\|\nabla\text{div}u\|_{L^2}^2=\int_{\mathbb{R}^3} R\rho \theta \text{div}u \mathrm{d}x.
			\end{equation}
			In terms of $\eqref{NSC}_{3}$ and \eqref{lem-b21}, one has
			\begin{equation}\label{ox1}
				\frac{\mathrm{d}}{\mathrm{d}t}\int_{\mathbb{R}^3}\frac{R}{\gamma-1}\rho(\theta-\theta_*)\mathrm{d}x-\tilde{\nu}\|\nabla u\|_{L^2}^2
				-(\tilde{\nu}+\tilde{\eta})\|\text{div}u\|_{L^2}^2=-\int_{\mathbb{R}^3}R\rho\theta\text{div}u\mathrm{d}x.
			\end{equation}
			Combining \eqref{lem-b21} and \eqref{ox1} together gives
			\begin{equation}\label{lem-b3}
				\frac{\mathrm{d}}{\mathrm{d}t}\int_{\mathbb{R}^3}\Big(\frac{1}{2}\rho |u|^2+\frac{R}{\gamma-1}\rho(\theta-\theta_*)\Big)\mathrm{d}x=0.
			\end{equation}
			Multiplying $\eqref{NSC}_3$ by $-\frac{\theta_*}{\theta}$, integrating the resulting equation over $\mathbb{R}^3$ and using $\eqref{NSC}_1$ give
			\begin{align}
				&\frac{\mathrm{d}}{\mathrm{d}t}\int_{\mathbb{R}^3}\left(R\theta_* \big(\rho \ln\frac{\rho}{\rho_*}-\rho+\rho_*\big)-\frac{R}{\gamma-1}\rho\theta_*\ln\frac{\theta}{\theta_*}\right)\mathrm{d}x \notag\\
				&\quad+\int_{\mathbb{R}^3}\left(\frac{\theta_*}{\theta}(2\tilde{\nu}|\mathbb{D}u|^2+\tilde{\eta}(\mathrm{div}u)^2)-\frac{\theta_*q\cdot\nabla\theta}{\theta^2}\right)\mathrm{d}x=0.
				\label{lem-b4}
			\end{align}
			
			Next, multiplying $\eqref{NSC}_4$ by $\frac{\theta_*q}{\kappa\theta^2}$ and integrating it over $\mathbb{R}^3$ yield
			\begin{align*}
				\int_{\mathbb{R}^3}\frac{\tau\theta_*}{\kappa\theta^2}q\cdot\partial_tq\mathrm{d}x+\int_{\mathbb{R}^3}\frac{\theta_*}{\kappa\theta^2}|q|^2\mathrm{d}x+\int_{\mathbb{R}^3}\frac{\theta_*q\cdot\nabla\theta}{\theta^2}\mathrm{d}x=0.	
			\end{align*}
			It is easy to verify that
			\begin{align*}
				\int_{\mathbb{R}^3} \frac{\tau\theta_*}{\kappa\theta^2}q\cdot\partial_tq\mathrm{d}x&=\frac{1}{2}\frac{\mathrm{d}}{\mathrm{d}t}\int_{\mathbb{R}^3}\frac{\tau\theta_*}{\kappa\theta^2}|q|^2\mathrm{d}x+\int_{\mathbb{R}^3}\frac{\tau\theta_*\theta_t}{\kappa\theta^3}|q|^2\mathrm{d}x.	
			\end{align*}
			Rewrite  $\eqref{NSC}_3$ into the following form
			\begin{align*}
				\theta_t=-u\cdot\nabla\theta-(\gamma-1)\theta\text{div}u+\frac{(\gamma-1)(-\text{div}q+2\tilde{\nu}|\mathbb{D}u|^2+\tilde{\eta}(\text{div}u)^2)}{R\rho}.
			\end{align*}
			In view of  \eqref{l-inf-est}, \eqref{new-var} and \eqref{a-priori est}, we have
			$$
			\begin{aligned}
				\int_{\mathbb{R}^3}\frac{\tau\theta_*\theta_t}{\kappa\theta^3}|q|^2\mathrm{d}x&\leq\Big| \int_{\mathbb{R}^3}\frac{\tau\theta_*}{\kappa\theta^3}\bigg(-u\cdot\nabla\theta-(\gamma-1)\theta\text{div}u+\frac{(\gamma-1)(-\text{div}q+2\tilde{\nu}|\mathbb{D}u|^2+\tilde{\eta}(\text{div}u)^2)}{R\rho}\bigg)|q|^2\mathrm{d}x\Big|\\
				&\leq  \int_{\mathbb{R}^3}\frac{\tau \theta_*}{\kappa\theta^3} |q|^2\mathrm{d}x
				(\|u\|_{L^\infty}\|\nabla\theta\|_{L^\infty}+\|\theta\|_{L^\infty}\|\text{div}u\|_{L^\infty}+\|\text{div}q\|_{L^\infty}+\|\nabla u\|_{L^\infty}^2) \\
				&\leq  \int_{\mathbb{R}^3}\frac{C\theta_*}{\kappa\theta^2} |q|^2\mathrm{d}x
				(\|\nabla w\|_{H^1}\|\nabla^2\phi\|_{H^1}+\|\nabla\phi\|_{H^1}\|\nabla^2w\|_{H^1}+\|\nabla^2q\|_{H^1}+\|\nabla^2 w\|_{H^1}^2) \\
				&\leq C(\delta+\delta^2)\int_{\mathbb{R}^3}\frac{\theta_*}{\kappa\theta^2}|q|^2\mathrm{d}x.
			\end{aligned}
			$$
			It is shown from the smallness of $\delta$ that
			\begin{equation}\label{A1}
				\begin{aligned}
					\frac{1}{2}\frac{\mathrm{d}}{\mathrm{d}t}\int_{\mathbb{R}^3}\frac{\tau\theta_*}{\kappa\theta^2}|q|^2\mathrm{d}x	+\frac{1}{2}\int_{\mathbb{R}^3}\frac{\theta_*}{\kappa\theta^2}|q|^2\mathrm{d}x+\int_{\mathbb{R}^3}\frac{\theta_*q\cdot\nabla\theta}{\theta^2}\mathrm{d}x\leq 0.	
				\end{aligned}	
			\end{equation}
			Adding $\eqref{lem-b3}$, $\eqref{lem-b4}$, and \eqref{A1} together gives
			\begin{align}
				&\frac{\mathrm{d}}{\mathrm{d}t}\int_{\mathbb{R}^3}\left(R\theta_* \big(\rho\ln\frac{\rho}{\rho_*}-\rho+\rho_*\big)+\frac{1}{2}\rho |u|^2+\frac{R}{\gamma-1}\rho(\theta-\theta_*-\theta_*\ln\frac{\theta}{\theta_*})
				+\frac{\tau\theta_*}{2\kappa\theta^2}|q|^2\right)\mathrm{d}x \notag\\				&\quad+\int_{\mathbb{R}^3}\Big(\frac{\theta_*}{2\kappa\theta^2}|q|^2+\frac{\theta_*}{\theta}(2\tilde{\nu}|\mathbb{D}u|^2+\tilde{\eta}(\mathrm{div}u)^2)\Big)\mathrm{d}x\leq 0.\label{lem-b5}
			\end{align}
			
			By \eqref{l-inf-est}, direct calculations yield that
			\begin{equation}\label{lem-b6}
				\begin{aligned}
					\rho\ln\frac{\rho}{\rho_*}-\rho+\rho_*\sim \frac{1}{4\rho_*}(\rho-\rho_*)^2,\quad \theta-\theta_*-\theta_*\ln\frac{\theta}{\theta_*}\sim \frac{1}{8\theta_*}(\theta-\theta_*)^2.
			\end{aligned}\end{equation}
			Thus, integrating \eqref{lem-b5} with respect to $t$ and using \eqref{lem-b6} gives
			\begin{equation}\label{lem-b9}
				\begin{aligned}
					&\|(\rho-\rho_*,u,\theta-\theta_*,q)(t)\|_{L^2}^2+\int_0^t(\|\nabla u(r)\|_{L^2}^2+\|q(r)\|_{L^2}^2)\mathrm{d}r\\
					&\leq C\|(\rho_0-\rho_*,u_0,\theta_0-\theta_*,q_0)\|_{L^2}^2%
					\leq C\delta_0^2,
				\end{aligned}
			\end{equation}
			which, together with \eqref{new-var}, gives \eqref{lem-b1}. This completes the proof of Lemma \ref{lem-b}.
		\end{proof}
		
		\begin{lem}\label{lem-n}
			Assume that $(n,w,\phi,\psi)$ is the classical solution of the problem \eqref{nsac}-\eqref{non-f} satisfying the a priori assumption \eqref{a-priori est}, then it holds for $0\leq t\leq T$ and $1\leq \alpha\leq s$ that
			\begin{equation}\label{lem-n1}
				\frac{1}{2}\frac{\mathrm{d}}{\mathrm{d}t}\|\nabla^\alpha n\|_{L^2}^2+c\int_{\mathbb{R}^3} \nabla^\alpha {\rm{div}}w\cdot\nabla^\alpha n \mathrm{d}x
				\leq C\delta(\|\nabla^\alpha n\|_{L^2}^2+\|\nabla^\alpha w\|_{L^2}^2+\|\nabla^{\alpha}{\rm{div}}w\|_{L^2}^2),
			\end{equation}
			where the positive constant $C$ is independent of time.
		\end{lem}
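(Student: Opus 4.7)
The plan is to apply $\nabla^\alpha$ to the first equation of \eqref{nsac}, take the $L^2$ inner product against $\nabla^\alpha n$, and track the nonlinear remainder carefully. The linear part produces exactly the two terms on the left-hand side of \eqref{lem-n1}, so the whole task reduces to showing
\[
\Bigl|\int_{\mathbb{R}^3}\nabla^\alpha f_1\cdot\nabla^\alpha n\,\mathrm{d}x\Bigr|
\leq C\delta\bigl(\|\nabla^\alpha n\|_{L^2}^2+\|\nabla^\alpha w\|_{L^2}^2+\|\nabla^\alpha\mathrm{div}\,w\|_{L^2}^2\bigr),
\]
where $f_1=-c\,\mathrm{div}(nw)=-c(n\,\mathrm{div}\,w+w\cdot\nabla n)$.

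I would split the remainder into the two pieces $n\,\mathrm{div}\,w$ and $w\cdot\nabla n$. For $\nabla^\alpha(n\,\mathrm{div}\,w)$, apply the product estimate \eqref{lem-commu1} with $f=n$, $g=\mathrm{div}\,w$, then invoke the $L^\infty$ bound $\|n\|_{L^\infty}+\|\mathrm{div}\,w\|_{L^\infty}\lesssim\delta$ from \eqref{l-inf-est}; Cauchy--Schwarz against $\nabla^\alpha n$ immediately yields a contribution controlled by $C\delta(\|\nabla^\alpha n\|_{L^2}^2+\|\nabla^\alpha\mathrm{div}\,w\|_{L^2}^2)$.

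The delicate piece is $\nabla^\alpha(w\cdot\nabla n)$, because a naive bound by \eqref{lem-commu1} would cost an extra derivative on $n$. The trick is to isolate the top-order contribution by writing
\[
\int_{\mathbb{R}^3}\nabla^\alpha(w\cdot\nabla n)\cdot\nabla^\alpha n\,\mathrm{d}x
=\int_{\mathbb{R}^3}w\cdot\nabla^\alpha\nabla n\cdot\nabla^\alpha n\,\mathrm{d}x
+\int_{\mathbb{R}^3}\bigl([\nabla^\alpha,w\cdot\nabla]n\bigr)\cdot\nabla^\alpha n\,\mathrm{d}x.
\]
For the first integral I use the symmetry $w\cdot\nabla|\nabla^\alpha n|^2=\mathrm{div}(w|\nabla^\alpha n|^2)-\mathrm{div}\,w\,|\nabla^\alpha n|^2$ and integrate by parts, which bounds it by $\|\mathrm{div}\,w\|_{L^\infty}\|\nabla^\alpha n\|_{L^2}^2\leq C\delta\|\nabla^\alpha n\|_{L^2}^2$. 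For the commutator term I apply \eqref{lem-commu2} with $f=w$ and $g=\nabla n$, giving
\[
\|[\nabla^\alpha,w\cdot\nabla]n\|_{L^2}\lesssim \|\nabla w\|_{L^\infty}\|\nabla^\alpha n\|_{L^2}+\|\nabla^\alpha w\|_{L^2}\|\nabla n\|_{L^\infty},
\]
and the a priori $L^\infty$ bound \eqref{l-inf-est} (using $s\geq 3$, so $\nabla w,\nabla n\in L^\infty$) converts both prefactors into $C\delta$. A further Cauchy--Schwarz against $\nabla^\alpha n$ finishes the estimate.

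The main obstacle is precisely the $w\cdot\nabla n$ piece: a direct application of \eqref{lem-commu1} would require controlling $\|\nabla^\alpha\nabla n\|_{L^2}$, which sits above the a priori norm; the commutator identity combined with integration by parts is what closes the gap, leaving only the quantities $\|\nabla^\alpha n\|_{L^2}$, $\|\nabla^\alpha w\|_{L^2}$, and $\|\nabla^\alpha\mathrm{div}\,w\|_{L^2}$ on the right-hand side, as required.
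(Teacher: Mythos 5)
Your proposal is correct and follows essentially the same route as the paper: the $w\cdot\nabla n$ piece is handled exactly as in the paper's proof (commutator estimate \eqref{lem-commu2} plus integration by parts on the transport term, costing only $\|\mathrm{div}\,w\|_{L^\infty}$), and the $n\,\mathrm{div}\,w$ piece via \eqref{lem-commu1} yields the same bound the paper obtains. The only cosmetic difference is that the paper also runs the $n\,\mathrm{div}\,w$ term through the commutator form before bounding the top-order factor by $\|n\|_{L^\infty}\|\nabla^\alpha\mathrm{div}\,w\|_{L^2}$, which changes nothing in the final estimate.
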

		\begin{proof}
			Applying the operator $\nabla^\alpha$ to $\eqref{nsac}_1$, multiplying the resulting equation by $\nabla^\alpha n$, and integrating over $\mathbb{R}^3$, we obtain
			\begin{align}
				&\frac{1}{2}\frac{\mathrm{d}}{\mathrm{d}t}\|\nabla^\alpha n\|_{L^2}^2+c\int_{\mathbb{R}^3} \nabla^\alpha\text{div}w\cdot\nabla^\alpha n \mathrm{d}x \notag\\
				&=-c\int_{\mathbb{R}^3} \nabla^\alpha (w\cdot\nabla n)\cdot\nabla^\alpha n \mathrm{d}x
				-c\int_{\mathbb{R}^3}\nabla^\alpha (n\text{div}w)\cdot\nabla^\alpha n \mathrm{d}x.\label{lem-n2}
			\end{align}
			By Lemma \ref{lem-commu}, \eqref{l-inf-est} and H$\ddot{\text{o}}$lder's inequality, one has
			\begin{align}
				&c\left| \int_{\mathbb{R}^3} \nabla^\alpha (w\cdot\nabla n)\cdot\nabla^\alpha n \mathrm{d}x\right| \notag\\
				&\leq c\left| \int_{\mathbb{R}^3} (\nabla^\alpha(w\cdot\nabla n)-w\cdot\nabla^\alpha\nabla n)\cdot\nabla^\alpha n\mathrm{d}x\right|+c\left|\int_{\mathbb{R}^3} w\cdot\nabla\nabla^\alpha n\cdot\nabla^\alpha n\mathrm{d}x\right|\notag\\
				&\leq C(\|\nabla w\|_{L^\infty}\|\nabla^\alpha n\|_{L^2}+\|\nabla n\|_{L^\infty}\|\nabla^\alpha w\|_{L^2})\|\nabla^\alpha n\|_{L^2}+C\|\text{div} w\|_{L^\infty}\|\nabla^\alpha n\|_{L^2}^2\notag\\
				&\leq C\delta(\|\nabla^\alpha n\|_{L^2}^2+\|\nabla^\alpha w\|_{L^2}^2).\label{lem-n2-1}
			\end{align}
			Similarly, we also have
			\begin{align}
				&\left| c\int_{\mathbb{R}^3} \nabla^\alpha (n\text{div}w)\cdot\nabla^\alpha n \mathrm{d}x\right|\notag\\
				&\leq C(\|\nabla n\|_{L^\infty}\|\nabla^\alpha w\|_{L^2}+\|\text{div}w\|_{L^\infty}\|\nabla^\alpha n\|_{L^2})\|\nabla^\alpha n\|_{L^2}+\|n\|_{L^\infty}\|\nabla^\alpha\text{div}w\|_{L^2}\|\nabla^\alpha n\|_{L^2}\notag\\
				&\leq C\delta(\|\nabla^\alpha n\|_{L^2}^2+\|\nabla^\alpha w\|_{L^2}^2+\|\nabla^{\alpha}\text{div}w\|_{L^2}^2).\label{lem-n2-2}
			\end{align}
			Substituting \eqref{lem-n2-1}  and \eqref{lem-n2-2} into \eqref{lem-n2} gives
			\eqref{lem-n1}.  This completes the proof of Lemma \ref{lem-n}.
		\end{proof}
		
		\begin{lem}\label{lem-w}
			Assume that $(n,w,\phi,\psi)$ is the classical solution of the problem \eqref{nsac}-\eqref{non-f} satisfying the a priori assumption \eqref{a-priori est}, then it holds for $0\leq t\leq T$ and $1\leq \alpha\leq s$ that
			\begin{align}
				&\frac{1}{2}\frac{\mathrm{d}}{\mathrm{d}t}\|\nabla^\alpha w\|_{L^2}^2-c\int_{\mathbb{R}^3} \nabla^\alpha {\rm{div}}w\cdot\nabla^\alpha n \mathrm{d}x-\sigma\int_{\mathbb{R}^3} \nabla^\alpha {\rm{div}}w\cdot\nabla^\alpha \phi \mathrm{d}x\notag\\	
				&\quad+\frac{3}{4}\nu\|\nabla^{\alpha}\nabla w\|_{L^2}^2+\frac{3}{4}(\nu+\eta)\|\nabla^\alpha{\rm{div}} w\|_{L^2}^2\notag\\
				&\leq C\delta(\|\nabla^\alpha n\|_{L^2}^2+\|\nabla^\alpha w\|_{L^2}^2+\|\nabla^\alpha \phi\|_{L^2}^2),\label{lem-w1}
			\end{align}
			where the positive constant $C$ is independent of time.
		\end{lem}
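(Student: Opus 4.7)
The proof will parallel the proof of Lemma \ref{lem-n} but applied to the momentum equation $\eqref{nsac}_2$, the key difference being that the RHS of $\eqref{nsac}_2$ contains the viscous dissipation which after integration by parts supplies the positive terms $\nu\|\nabla^{\alpha+1}w\|_{L^2}^2+(\nu+\eta)\|\nabla^\alpha\mathrm{div}\,w\|_{L^2}^2$, and contains the nonlinear forcing $f_2$ which, because of its $\Delta w$ and $\nabla\mathrm{div}\,w$ terms, reaches into this same high-order dissipation and must be absorbed.

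Concretely, I would apply $\nabla^\alpha$ to $\eqref{nsac}_2$, take the $L^2$ inner product with $\nabla^\alpha w$, and integrate by parts. The transport-coupling contributions
\[
c\int_{\mathbb{R}^3}\nabla^{\alpha+1}n\cdot\nabla^\alpha w\,\mathrm{d}x+\sigma\int_{\mathbb{R}^3}\nabla^{\alpha+1}\phi\cdot\nabla^\alpha w\,\mathrm{d}x
\]
become, after integration by parts, $-c\int\nabla^\alpha n\cdot\nabla^\alpha\mathrm{div}\,w\,\mathrm{d}x-\sigma\int\nabla^\alpha\phi\cdot\nabla^\alpha\mathrm{div}\,w\,\mathrm{d}x$, matching the coupling terms appearing in \eqref{lem-w1}. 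The viscous terms yield $\nu\|\nabla^{\alpha+1}w\|_{L^2}^2+(\nu+\eta)\|\nabla^\alpha\mathrm{div}\,w\|_{L^2}^2$ with a minus sign on the LHS, i.e. the full dissipation on the LHS.

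Next I would bound the nonlinear term $\int\nabla^\alpha f_2\cdot\nabla^\alpha w\,\mathrm{d}x$ piece by piece. For the semilinear pieces $-cw\cdot\nabla w$, $\tfrac{cn\nabla n}{1+n}$ and $-\tfrac{\sigma\phi\nabla n}{1+n}$, I would apply the commutator-type estimate \eqref{lem-commu1} of Lemma \ref{lem-commu} combined with the $L^\infty$ bounds \eqref{l-inf-est}; each term is of the form $C\delta(\|\nabla^\alpha n\|_{L^2}^2+\|\nabla^\alpha w\|_{L^2}^2+\|\nabla^\alpha\phi\|_{L^2}^2)$, which is exactly the structure on the RHS of \eqref{lem-w1}. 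The composition $\frac{n}{1+n}$ is controlled by a standard composite-function estimate (its derivatives are bounded by derivatives of $n$ times an $L^\infty$ factor) using \eqref{l-inf-est}.

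The main obstacle, and the reason the coefficients on the dissipation are $\tfrac34\nu$ and $\tfrac34(\nu+\eta)$ rather than the full $\nu$ and $\nu+\eta$, is the two quasilinear pieces $-\tfrac{\nu n}{1+n}\Delta w$ and $-\tfrac{(\nu+\eta)n}{1+n}\nabla\mathrm{div}\,w$ in $f_2$: when $\alpha$ derivatives land on these they produce terms with up to $\nabla^{\alpha+2}w$, of the same order as the dissipation itself. I would split via \eqref{lem-commu2},
\[
\nabla^\alpha\!\left(\tfrac{n}{1+n}\Delta w\right)=\tfrac{n}{1+n}\nabla^\alpha\Delta w+\Big[\nabla^\alpha,\tfrac{n}{1+n}\Big]\Delta w,
\]
integrate by parts once in the leading piece to turn $\nabla^\alpha\Delta w\cdot\nabla^\alpha w$ into $|\nabla^{\alpha+1}w|^2$ plus a remainder carrying a derivative of $\tfrac{n}{1+n}$, and then use $\|\tfrac{n}{1+n}\|_{L^\infty}\le C\delta$ together with \eqref{l-inf-est} to bound everything by $C\delta\|\nabla^{\alpha+1}w\|_{L^2}^2$ plus lower-order cross terms handled by Cauchy--Schwarz. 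The commutator remainder is handled analogously via \eqref{lem-commu2}. The identical strategy applies to the $\nabla\mathrm{div}\,w$ piece, producing $C\delta\|\nabla^\alpha\mathrm{div}\,w\|_{L^2}^2$. Finally, absorbing these $C\delta$-bounded high-order terms into $\tfrac14\nu\|\nabla^{\alpha+1}w\|_{L^2}^2$ and $\tfrac14(\nu+\eta)\|\nabla^\alpha\mathrm{div}\,w\|_{L^2}^2$ via the smallness of $\delta$ leaves the stated coefficients $\tfrac34\nu$ and $\tfrac34(\nu+\eta)$, completing the proof of \eqref{lem-w1}.
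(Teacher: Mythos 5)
Your proposal follows essentially the same route as the paper: apply $\nabla^\alpha$ to $\eqref{nsac}_2$, test with $\nabla^\alpha w$, integrate by parts to produce the coupling terms and the full viscous dissipation on the left, estimate the nonlinear terms $I_1$--$I_4$ with the commutator bounds of Lemma \ref{lem-commu} and the $L^\infty$ controls \eqref{l-inf-est}, and finally absorb the $C\delta$-multiples of $\|\nabla^{\alpha}\nabla w\|_{L^2}^2$ and $\|\nabla^\alpha{\rm{div}}w\|_{L^2}^2$ into the dissipation to arrive at the $\tfrac34$ coefficients. Your extra detail on handling the quasilinear viscous pieces (splitting off the leading term and integrating by parts once so that no more than $\alpha+1$ derivatives of $w$ appear) is exactly what the paper's terse bound on $I_4$ requires, so the argument is correct and matches the paper's proof.
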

		\begin{proof}
			Applying the operator $\nabla^\alpha$ to $\eqref{nsac}_2$, multiplying the resulting equation by $\nabla^\alpha w$, and then integrating over $\mathbb{R}^3$ give
			\begin{align}
				&\frac{1}{2}\frac{\mathrm{d}}{\mathrm{d}t}\|\nabla^\alpha w\|_{L^2}^2-c\int_{\mathbb{R}^3} \nabla^\alpha\text{div}w\cdot\nabla^\alpha n \mathrm{d}x-\sigma\int_{\mathbb{R}^3} \nabla^\alpha {\rm{div}}w\cdot\nabla^\alpha \phi \mathrm{d}x \notag\\
				&\quad
				+\nu\|\nabla^{\alpha}\nabla w\|_{L^2}^2+(\nu+\eta)\|\nabla^\alpha \text{div} w\|_{L^2}^2\notag\\
				&=-c\int_{\mathbb{R}^3} \nabla^\alpha(w\cdot\nabla w)\cdot \nabla^\alpha w\mathrm{d}x+c\int_{\mathbb{R}^3}  \nabla^\alpha\Big(\frac{n  \nabla n}{1+n}\Big)\cdot\nabla^\alpha w \mathrm{d}x-\sigma\int_{\mathbb{R}^3}  \nabla^\alpha\Big(\frac{\phi\nabla n}{1+n}\Big)\cdot\nabla^\alpha w \mathrm{d}x\notag\\
				&\quad-\int_{\mathbb{R}^3} \nabla^\alpha\Big(\frac{\nu n\Delta w}{1+n}+\frac{(\nu+\eta)n\nabla\text{div}w}{1+n}\Big)\cdot\nabla^\alpha w\mathrm{d}x\notag\\
				&\triangleq I_1+I_2+I_3+I_4.\label{lem-w2}
			\end{align}
			By Lemma \ref{lem-commu} and \eqref{l-inf-est}, the term $I_1$ is estimated  as follows
			\begin{align}
				|I_1|&\leq C\|\nabla^\alpha(w\cdot\nabla w)-w\nabla^\alpha\nabla w\|_{L^2}\|\nabla^\alpha w\|_{L^2}
				+C\|\text{div}w\|_{L^\infty}\|\nabla^\alpha w\|_{L^2}^2\notag\\
				&\leq C\|\nabla w\|_{L^\infty}\|\nabla^\alpha w\|_{L^2}^2
				+C\|\text{div} w\|_{L^\infty}\|\nabla^\alpha w\|_{L^2}^2\notag\\
				&\leq C\delta\|\nabla^\alpha w\|_{L^2}^2.\label{lem-w3}
			\end{align}
			As for the term $I_2$, it is easy to verify
			\begin{align}
				|I_2|&\leq C \left|\int_{\mathbb{R}^3}  \nabla^\alpha\Big(\frac{n}{1+n}\nabla n\Big)\cdot \nabla^\alpha w\mathrm{d}x\right|\notag\\
				&\leq C\left|\int_{\mathbb{R}^3}\Big(\nabla^\alpha\Big(\frac{n}{1+n}\nabla n\Big)-\frac{n}{1+n}\nabla^\alpha \nabla n\nabla\Big)\cdot \nabla^\alpha w\mathrm{d}x\right|+C\left|\int_{\mathbb{R}^3}  \frac{n}{1+n}\nabla^\alpha \nabla n \cdot \nabla^\alpha w\mathrm{d}x\right|\notag\\
				&\leq C\Big(\Big\|\nabla\Big(\frac{n}{1+n}\Big)\Big\|_{L^\infty}\|\nabla^\alpha n\|_{L^2}+\Big\|\nabla^\alpha\Big(\frac{n}{1+n}\Big)\Big\|_{L^2}\|\nabla n\|_{L^\infty}\Big)\|\nabla^\alpha w\|_{L^2}\notag\\
				&\quad+C\Big\|\nabla\Big(\frac{n}{1+n}\Big)\Big\|_{L^\infty}\|\nabla^\alpha n\|_{L^2}\|\nabla^\alpha w\|_{L^2}
				+C\Big\|\frac{n}{1+n}\Big\|_{L^\infty}\|\nabla^\alpha n\|_{L^2}\|\nabla^\alpha\text{div}w\|_{L^2}\notag\\
				&\leq C\delta(\|\nabla^\alpha n\|_{L^2}^2+\|\nabla^\alpha w\|_{L^2}^2+\|\nabla^{\alpha}\text{div}w\|_{L^2}^2).
			\end{align}	
			Similarly, we have the following results for the term $I_3$:
			\begin{align}
				|I_3|&\leq C\Big\|\nabla^\alpha\Big(\frac{\phi}{1+n}\cdot\nabla n\Big)-\frac{\phi}{1+n}\nabla^\alpha\nabla n\Big\|_{L^2}\|\nabla^\alpha w\|_{L^2}+\int_{\mathbb{R}^3} \frac{\phi}{1+n} \nabla^\alpha\nabla n \cdot\nabla^\alpha w\mathrm{d}x \notag\\
				&\leq C(\|\nabla \phi\|_{L^\infty}\|\nabla^\alpha n\|_{L^2}+\|\nabla n\|_{L^\infty}\|\nabla^\alpha n\|_{L^2}+\|\nabla n\|_{L^\infty}\|\nabla^\alpha \phi\|_{L^2})\|\nabla^\alpha w\|_{L^2}\notag\\
				&\quad\quad +C\|\phi\|_{L^\infty}\|\nabla^\alpha n\|_{L^2}\|\nabla^\alpha\text{div}w\|_{L^2}+C\|\nabla \phi\|_{L^\infty}\|\nabla^\alpha n\|_{L^2}\|\nabla^\alpha w\|_{L^2} \notag\\
				&\leq C\delta(\|\nabla^\alpha n\|_{L^2}^2+\|\nabla^\alpha w\|_{L^2}^2+\|\nabla^\alpha \text{div} w\|_{L^2}^2+\|\nabla^\alpha \phi\|_{L^2}^2).
			\end{align}
			Finally, for the term of $I_4$, we have
			\begin{align}
				|I_4|&\leq C\left|\int_{\mathbb{R}^3}\nabla^\alpha \Big(\frac{n}{1+n}\Delta w\Big)\cdot  \nabla^\alpha w\mathrm{d}x\right|+C\left|\int_{\mathbb{R}^3} \nabla^\alpha \Big(\frac{n}{1+n}\nabla \text{div}w\Big)\cdot\nabla^\alpha w\mathrm{d}x\right|\notag\\
				&\leq C\delta(\|\nabla^\alpha n\|_{L^2}^2+\|\nabla^{\alpha}\nabla w\|_{L^2}^2+\|\nabla^{\alpha}\text{div} w\|_{L^2}^2).\label{lem-w4}
			\end{align}
			
			Combining \eqref{lem-w3}-\eqref{lem-w4}, we conclude from \eqref{lem-w2} that
			\begin{align}
				&\frac{1}{2}\frac{\mathrm{d}}{\mathrm{d}t}\|\nabla^\alpha w\|_{L^2}^2-c\int_{\mathbb{R}^3} \nabla^\alpha\text{div}w\cdot\nabla^\alpha n \mathrm{d}x-\sigma\int_{\mathbb{R}^3} \nabla^\alpha {\rm{div}}w\cdot\nabla^\alpha \phi \mathrm{d}x\notag\\
				&\quad+\nu\|\nabla^{\alpha}\nabla w\|_{L^2}^2+(\nu+\eta)\|\nabla^\alpha \text{div} w\|_{L^2}^2\notag\\
				&\leq C\delta(\|\nabla^\alpha n\|_{L^2}^2+\|\nabla^\alpha w\|_{L^2}^2+\|\nabla^{\alpha}\nabla w\|_{L^2}^2+\|\nabla^\alpha \text{div}w\|_{L^2}^2+\|\nabla^\alpha \phi\|_{L^2}^2).\label{lem-w5}
			\end{align}
			Due to the smallness of $\delta$, we get \eqref{lem-w1}. This completes the proof of Lemma \ref{lem-w}.
		\end{proof}
		
		\begin{lem}\label{lem-th}
			Assume that $(n,w,\phi,\psi)$ is the classical solution of the problem \eqref{nsac}-\eqref{non-f} satisfying the a priori assumption \eqref{a-priori est}, then it holds for $0\leq t\leq T$ and $1\leq \alpha\leq s$ that
			\begin{align}
				&\frac{1}{2}\frac{\mathrm{d}}{\mathrm{d}t}\Big(\|\nabla^\alpha\phi\|_{L^2}^2+\frac{1}{1+n}\|\nabla^\alpha\psi\|_{L^2}^2\Big)+\frac{1}{2\tau}\|\nabla^\alpha\psi\|_{L^2}^2+\sigma\int_{\mathbb{R}^3} \nabla^\alpha {\rm{div}}w\cdot\nabla^\alpha \phi \mathrm{d}x \notag\\
				&\leq C\delta(\|\nabla^\alpha n\|_{L^2}^2+\|\nabla^\alpha w\|_{L^2}^2+\|\nabla^{\alpha}\nabla w\|_{L^2}^2+\|\nabla^\alpha {\rm{div}}w\|_{L^2}^2+\|\nabla^\alpha \phi\|_{L^2}^2),\label{lem-th1}
			\end{align}
			where the positive constant $C$ is independent of time.
		\end{lem}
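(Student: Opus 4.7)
\medskip

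\noindent\textbf{Proof plan for Lemma \ref{lem-th}.}
The plan is to run two coupled high-order energy identities for the pair $(\phi,\psi)$, tuned so that the worst nonlinear term on the right-hand side of the $\phi$-equation, namely the $\frac{bn\,\mathrm{div}\psi}{1+n}$ piece inside $f_{3}$, is absorbed by a \emph{weighted} test of the $\psi$-equation. This is exactly why the statement carries the weight $\frac{1}{1+n}$ in front of $\|\nabla^{\alpha}\psi\|_{L^{2}}^{2}$ (read as $\int \frac{|\nabla^{\alpha}\psi|^{2}}{1+n}\,\mathrm{d}x$). Without the weight, one would be forced to estimate $\nabla^{\alpha}\mathrm{div}\psi=\nabla^{\alpha+1}\psi$ in $L^{2}$, which is out of control when $\alpha=s$.

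First, I apply $\nabla^{\alpha}$ to \eqref{nsac}$_{3}$, take the $L^{2}$ inner product with $\nabla^{\alpha}\phi$, and split the nonlinear right-hand side as $f_{3}=\tilde f_{3}+\frac{bn}{1+n}\mathrm{div}\psi$, moving the dangerous piece to the left. Using the commutator identity $\nabla^{\alpha}\bigl(\tfrac{bn}{1+n}\mathrm{div}\psi\bigr)=\tfrac{bn}{1+n}\nabla^{\alpha}\mathrm{div}\psi+[\nabla^{\alpha},\tfrac{bn}{1+n}]\mathrm{div}\psi$ together with $b-\tfrac{bn}{1+n}=\tfrac{b}{1+n}$, the $\phi$-identity becomes
\begin{equation*}
\tfrac{1}{2}\tfrac{\mathrm{d}}{\mathrm{d}t}\|\nabla^{\alpha}\phi\|_{L^{2}}^{2}
+\sigma\!\int\!\nabla^{\alpha}\mathrm{div}w\cdot\nabla^{\alpha}\phi\,\mathrm{d}x
+b\!\int\!\tfrac{\nabla^{\alpha}\mathrm{div}\psi\cdot\nabla^{\alpha}\phi}{1+n}\mathrm{d}x
=\int\!\nabla^{\alpha}\tilde f_{3}\cdot\nabla^{\alpha}\phi\,\mathrm{d}x+\mathrm{Comm},
\end{equation*}
where the commutator $\mathrm{Comm}$ and $\nabla^{\alpha}\tilde f_{3}$ contain only terms of the form already handled in Lemmas \ref{lem-n}--\ref{lem-w}, i.e., products where at most one factor carries $\alpha$ derivatives and the other is bounded in $L^{\infty}$ via \eqref{l-inf-est}.

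Next, I apply $\nabla^{\alpha}$ to \eqref{nsac}$_{4}$ and test with the weighted field $\tfrac{\nabla^{\alpha}\psi}{1+n}$. The time term yields $\tfrac{1}{2}\tfrac{\mathrm{d}}{\mathrm{d}t}\!\int\!\tfrac{|\nabla^{\alpha}\psi|^{2}}{1+n}\mathrm{d}x$ plus a harmless $\tfrac{1}{2}\!\int\!\tfrac{\partial_{t}n\,|\nabla^{\alpha}\psi|^{2}}{(1+n)^{2}}\mathrm{d}x$, controlled by $\|\partial_{t}n\|_{L^{\infty}}\lesssim\delta$ (using $\partial_{t}n=-c\,\mathrm{div}w-c\,\mathrm{div}(nw)$ and \eqref{l-inf-est}). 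The damping gives $\tfrac{1}{\tau}\!\int\!\tfrac{|\nabla^{\alpha}\psi|^{2}}{1+n}\mathrm{d}x$. The coupling term, after integration by parts with the weight, produces
\begin{equation*}
b\!\int\!\tfrac{\nabla^{\alpha}\nabla\phi\cdot\nabla^{\alpha}\psi}{1+n}\mathrm{d}x
=-b\!\int\!\tfrac{\nabla^{\alpha}\phi\cdot\nabla^{\alpha}\mathrm{div}\psi}{1+n}\mathrm{d}x
+b\!\int\!\tfrac{\nabla^{\alpha}\phi\,\nabla n\cdot\nabla^{\alpha}\psi}{(1+n)^{2}}\mathrm{d}x,
\end{equation*}
and the first term here cancels \emph{exactly} the bad $b\!\int\!\tfrac{\nabla^{\alpha}\mathrm{div}\psi\cdot\nabla^{\alpha}\phi}{1+n}$ inherited from the $\phi$-identity. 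The residual piece with $\nabla n$ is bounded by $\|\nabla n\|_{L^{\infty}}\,\|\nabla^{\alpha}\psi\|_{L^{2}}\,\|\nabla^{\alpha}\phi\|_{L^{2}}\lesssim\delta(\|\nabla^{\alpha}\psi\|_{L^{2}}^{2}+\|\nabla^{\alpha}\phi\|_{L^{2}}^{2})$.

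Adding the two identities gives the combined energy inequality; because $\tfrac{1}{2}\le\tfrac{1}{1+n}\le 2$ by \eqref{l-inf-est}, the damping controls $\tfrac{1}{2\tau}\|\nabla^{\alpha}\psi\|_{L^{2}}^{2}$ as stated. The remaining nonlinear contributions—$\nabla^{\alpha}\tilde f_{3}\cdot\nabla^{\alpha}\phi$, the commutator $[\nabla^{\alpha},\tfrac{bn}{1+n}]\mathrm{div}\psi$, the $\partial_{t}n$ weight term, and products from the viscous dissipation quadratic in $\nabla w$ divided by $1+n$—are all handled exactly as in the proof of Lemma \ref{lem-w}: expand with Leibniz, invoke the commutator estimate \eqref{lem-commu1}--\eqref{lem-commu2}, use $\|\nabla^{k}(\cdot)\|_{L^{\infty}}\le C\delta$ for $0\le k\le s-2$, and absorb any $\nabla^{\alpha+1}w$ factor into $C\delta(\|\nabla^{\alpha}\nabla w\|_{L^{2}}^{2}+\|\nabla^{\alpha}\mathrm{div}w\|_{L^{2}}^{2})$. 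The main obstacle is precisely the cancellation of the top-order $\nabla^{\alpha}\mathrm{div}\psi$ term, which hinges on choosing the non-standard weight $\tfrac{1}{1+n}$ for the $\psi$-test function; once this is done the rest is routine.
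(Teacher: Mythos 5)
Your proposal is correct and rests on the same key mechanism as the paper's proof: the top-order term $\nabla^\alpha{\rm{div}}\psi$ produced by the coupling $b\,{\rm{div}}\psi$ and by the nonlinearity $\frac{bn\,{\rm{div}}\psi}{1+n}$ in $f_3$ must cancel against the $b\nabla\phi$ term of $\eqref{nsac}_4$, and the effective coefficient $b\big(1-\frac{n}{1+n}\big)=\frac{b}{1+n}$ is exactly what forces the weight $\frac{1}{1+n}$ on the $\psi$-energy. The only difference is organizational: you introduce the weight a priori by testing $\nabla^\alpha\eqref{nsac}_4$ with $\frac{\nabla^\alpha\psi}{1+n}$ and moving the principal part of the dangerous nonlinearity to the left-hand side, whereas the paper tests with the unweighted $\nabla^\alpha\psi$ and recovers the weighted energy a posteriori, by substituting $-b\nabla^\alpha\nabla\phi=\nabla^\alpha(\partial_t\psi+\frac{1}{\tau}\psi)$ into the term $J_{422}=-b\int_{\mathbb{R}^3}\frac{n}{1+n}\nabla^\alpha\psi\cdot\nabla^\alpha\nabla\phi\,\mathrm{d}x$, which produces $\frac{1}{2}\frac{\mathrm{d}}{\mathrm{d}t}\int_{\mathbb{R}^3}\frac{n}{1+n}|\nabla^\alpha\psi|^2\,\mathrm{d}x$ and combines with the unweighted energy through $1-\frac{n}{1+n}=\frac{1}{1+n}$. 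The two routes are algebraically equivalent, your treatment of the residual terms (the commutator $[\nabla^\alpha,\frac{bn}{1+n}]{\rm{div}}\psi$, the $\partial_t n$ weight term via $\eqref{nsac}_1$, and the remaining pieces of $f_3$) matches the paper's use of \eqref{lem-commu1}--\eqref{lem-commu2} and \eqref{l-inf-est}, and the final absorption of the $C\delta\|\nabla^\alpha\psi\|_{L^2}^2$ contributions into the damping to reach $\frac{1}{2\tau}$ is the same in both.
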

		
		\begin{proof}
			Applying the operator $\nabla^\alpha$ to $\eqref{nsac}_3$, multiplying the resulting equation by $\nabla^\alpha \phi$, and then integrating over $\mathbb{R}^3$, we obtain
			\begin{align}
				&\frac{1}{2}\frac{\mathrm{d}}{\mathrm{d}t}\|\nabla^\alpha \phi\|_{L^2}^2+\sigma\int_{\mathbb{R}^3} \nabla^\alpha {\rm{div}}w\cdot\nabla^\alpha \phi \mathrm{d}x+b\int_{\mathbb{R}^3}\nabla^\alpha{\rm{div}}\psi\cdot\nabla^\alpha\phi \mathrm{d}x\notag\\
				&=-c\int_{\mathbb{R}^3} \nabla^\alpha(w\cdot\nabla \phi)\cdot \nabla^\alpha \phi \mathrm{d}x-c(\gamma-1)\int_{\mathbb{R}^3}  \nabla^\alpha(\phi\text{div}w)\cdot\nabla^\alpha \phi \mathrm{d}x\notag\\
				&\quad+\frac{\sigma}{c}\int_{\mathbb{R}^3} \nabla^\alpha\Big(\frac{2\nu|\mathbb{D}w|^2+\eta({\rm{div}}w)^2}{1+n}\Big)\cdot\nabla^\alpha \phi \mathrm{d}x+b\int_{\mathbb{R}^3}\nabla^\alpha\Big(\frac{n\text{div}\psi}{1+n}\Big)\cdot\nabla^\alpha \phi \mathrm{d}x\notag\\
				&\triangleq J_1+J_2+J_3+J_4.\label{lem-th2}
			\end{align}
			First, by Lemma \ref{lem-commu}, a direct calculation gives
			\begin{align}
				|J_1|&\leq C\|\nabla^\alpha(w\cdot\nabla \phi)-w\nabla^\alpha\nabla \phi\|_{L^2}\|\nabla^\alpha \phi\|_{L^2}
				+C\|\text{div}w\|_{L^\infty}\|\nabla^\alpha \phi\|_{L^2}^2\notag\\
				&\leq C(\|\nabla w\|_{L^\infty}\|\nabla^\alpha \phi\|_{L^2}+\|\nabla \phi\|_{L^\infty}\|\nabla^\alpha w\|_{L^2})\|\nabla^\alpha \phi\|_{L^2}
				+C\|\text{div} w\|_{L^\infty}\|\nabla^\alpha \phi\|_{L^2}^2\notag\\
				&\leq C\delta(\|\nabla^\alpha w\|_{L^2}^2+\|\nabla^\alpha \phi\|_{L^2}^2).\label{lem-th3}
			\end{align}
			In view of \eqref{lem-commu1} in Lemma \ref{lem-commu}, we have
			\begin{align}
				|J_2|&\leq C(\|\text{div}w\|_{L^\infty}\|\nabla^\alpha \phi\|_{L^2}+\|\phi\|_{L^\infty}\|\nabla^\alpha\text{div}w\|_{L^2})\|\nabla^\alpha \phi\|_{L^2}\notag\\
				&\leq C\delta(\|\nabla^\alpha \phi\|_{L^2}^2+\|\nabla^{\alpha}\text{div}w\|_{L^2}^2).
			\end{align}	
			For the term $J_3$, it holds
			\begin{align}
				|J_3|&\leq C\Big\|\nabla^\alpha\Big(\frac{1}{1+n}|\nabla w|^2\Big)-\frac{1}{1+n}\nabla^\alpha(|\nabla w|^2)\Big\|_{L^2}\|\nabla^\alpha \phi\|_{L^2}+\Big|\int_{\mathbb{R}^3} \frac{1}{1+n}\nabla^\alpha(|\nabla w|^2)\cdot\nabla^\alpha \phi \mathrm{d}x\Big| \notag\\
				&\leq C(\|\nabla n\|_{L^\infty}\|\nabla^\alpha w\|_{L^2}+\|\nabla w\|_{L^\infty}\|\nabla^\alpha n\|_{L^2})\|\nabla^\alpha \phi\|_{L^2}+C\delta\|\nabla^\alpha \phi\|_{L^2}\|\nabla^\alpha \nabla w\|_{L^2} \notag\\
				&\leq C\delta(\|\nabla^\alpha n\|_{L^2}^2+\|\nabla^\alpha w\|_{L^2}^2+\|\nabla^\alpha \nabla w\|_{L^2}^2+\|\nabla^\alpha \phi\|_{L^2}^2).
			\end{align}
			As for $J_4$, we rewrite it as follows
			$$
			\begin{aligned}
				J_4&=b\int_{\mathbb{R}^3}\Big(\nabla^\alpha\Big(\frac{n\text{div}\psi}{1+n}\Big)- \frac{n}{1+n}\nabla^\alpha\text{div}\psi\Big)\cdot\nabla^\alpha \phi \mathrm{d}x
				+b\int_{\mathbb{R}^3}\frac{n}{1+n}\nabla^\alpha\text{div}\psi \cdot\nabla^\alpha\phi \mathrm{d}x\\
				&\triangleq J_{41}+J_{42}.
			\end{aligned}
			$$
			It is easy to verify that
			$$
			\begin{aligned}
				|J_{41}|&\lesssim \Big(\Big\|\nabla\Big(\frac{n}{1+n}\Big)\Big\|_{L^\infty}\|\nabla^\alpha\psi\|_{L^2}+
				\Big\|\nabla^\alpha\Big(\frac{n}{1+n}\Big)\Big\|_{L^2}\|\text{div}\psi\|_{L^\infty}\Big)\|\nabla^\alpha\phi\|_{L^2}\\
				&\lesssim \delta(\|\nabla^\alpha\psi\|_{L^2}+\|\nabla^\alpha n\|_{L^2})\|\nabla^\alpha \phi\|_{L^2}\\
				&\lesssim \delta(\|\nabla^\alpha n\|_{L^2}^2+\|\nabla^\alpha\phi\|_{L^2}^2+\|\nabla^\alpha\psi\|_{L^2}^2).
			\end{aligned}
			$$
			As for $J_{42}$, in terms of integration by parts, it holds
			$$
			\begin{aligned}
				J_{42}&=-b\int_{\mathbb{R}^3}\nabla\Big(\frac{n}{1+n}\Big)\nabla^\alpha\psi \cdot\nabla^\alpha\phi \mathrm{d}x-b\int_{\mathbb{R}^3}\frac{n}{1+n}\nabla^\alpha\psi \cdot\nabla^\alpha\nabla\phi \mathrm{d}x\\
				&\triangleq J_{421}+J_{422}.
			\end{aligned}
			$$
			The first term $J_{421}$ is bounded by
			\begin{equation*}
				|J_{421}|\lesssim \|\nabla n\|_{L^\infty}\|\nabla^\alpha \psi\|_{L^2} \|\nabla^\alpha \phi\|_{L^2}\lesssim \delta(\|\nabla^\alpha\phi\|_{L^2}^2+\|\nabla^\alpha\psi\|_{L^2}^2).
			\end{equation*}
			For	the second term $J_{422}$, we utilize $\eqref{nsac}_4$ to get
			$$
			\begin{aligned}
				J_{422}&=-b\int_{\mathbb{R}^3}\frac{n}{1+n}\nabla^\alpha\psi \cdot\nabla^\alpha\nabla\phi \mathrm{d}x\\
				&=\int_{\mathbb{R}^3}\frac{n}{1+n}\nabla^\alpha\psi \cdot\nabla^\alpha(\partial_t\psi+\frac{1}{\tau}\psi) \mathrm{d}x\\
				&=\frac{1}{2}\frac{\mathrm{d}}{\mathrm{d}t}\int_{\mathbb{R}^3}\frac{n}{1+n}|\nabla^\alpha\psi|^2\mathrm{d}x-\frac{1}{2}\int_{\mathbb{R}^3}\Big(\frac{n}{1+n}\Big)_t|\nabla^\alpha\psi|^2\mathrm{d}x+\int_{\mathbb{R}^3}\frac{n}{\tau(1+n)}|\nabla^\alpha\psi|^2\mathrm{d}x\\
				&=\frac{1}{2}\frac{\mathrm{d}}{\mathrm{d}t}\int_{\mathbb{R}^3}\frac{n}{1+n}|\nabla^\alpha\psi|^2\mathrm{d}x-\frac{1}{2}\int_{\mathbb{R}^3}\frac{\partial_tn}{(1+n)^2}|\nabla^\alpha\psi|^2\mathrm{d}x+\int_{\mathbb{R}^3}\frac{n}{\tau(1+n)}|\nabla^\alpha\psi|^2\mathrm{d}x\\
				&=\frac{1}{2}\frac{\mathrm{d}}{\mathrm{d}t}\int_{\mathbb{R}^3}\frac{n}{1+n}|\nabla^\alpha\psi|^2\mathrm{d}x-\frac{1}{2}\int_{\mathbb{R}^3}\frac{f_1-c\,\text{div}w}{(1+n)^2}|\nabla^\alpha\psi|^2\mathrm{d}x+\int_{\mathbb{R}^3}\frac{n}{\tau(1+n)}|\nabla^\alpha\psi|^2\mathrm{d}x\\
				&\leq \frac{1}{2}\frac{\mathrm{d}}{\mathrm{d}t}\int_{\mathbb{R}^3}\frac{n}{1+n}|\nabla^\alpha\psi|^2\mathrm{d}x+C\delta \|\nabla^\alpha\psi\|_{L^2}^2.
			\end{aligned}
			$$
			It then concludes from above to prove
			\begin{equation}\label{lem-th4}
				|J_4|\leq |J_{41}|+|J_{421}|+|J_{422}|\leq \frac{1}{2}\frac{\mathrm{d}}{\mathrm{d}t}\int_{\mathbb{R}^3}\frac{n}{1+n}|\nabla^\alpha\psi|^2\mathrm{d}x+C\delta(\|\nabla^\alpha\phi\|_{L^2}^2+\|\nabla^\alpha\psi\|_{L^2}^2).
			\end{equation}
			In terms of \eqref{lem-th3}-\eqref{lem-th4} and \eqref{lem-th2}, we verify that
			\begin{align}
				&\frac{1}{2}\frac{\mathrm{d}}{\mathrm{d}t}\|\nabla^\alpha \phi\|_{L^2}^2+\sigma\int_{\mathbb{R}^3} \nabla^\alpha {\rm{div}}w\cdot\nabla^\alpha \phi \mathrm{d}x+b\int_{\mathbb{R}^3}\nabla^\alpha{\rm{div}}\psi\cdot\nabla^\alpha\phi \mathrm{d}x\notag\\
				&\leq C\delta(\|\nabla^\alpha n\|_{L^2}^2+\|\nabla^\alpha w\|_{L^2}^2+\|\nabla^{\alpha}\nabla w\|_{L^2}^2+\|\nabla^\alpha \text{div}w\|_{L^2}^2+\|\nabla^\alpha \phi\|_{L^2}^2+\|\nabla^{\alpha}\psi\|_{L^2}^2)\notag\\
				&\quad+\frac{1}{2}\frac{\mathrm{d}}{\mathrm{d}t}\int_{\mathbb{R}^3}\frac{n}{1+n}|\nabla^\alpha\psi|^2\mathrm{d}x.\label{lem-th51}
			\end{align}
			Applying the operator $\nabla^\alpha$ to $\eqref{nsac}_4$, multiplying it by $\nabla^\alpha \psi$, and then integrating over $\mathbb{R}^3$, we obtain
			\begin{equation}\label{lemn1}
				\begin{aligned}
					&\frac{1}{2}\frac{\mathrm{d}}{\mathrm{d}t}\|\nabla^\alpha \psi\|_{L^2}^2+\frac{1}{\tau}\|\nabla^\alpha\psi\|_{L^2}^2-
					b\int_{\mathbb{R}^3}\nabla^\alpha{\rm{div}}\psi\cdot\nabla^\alpha\phi \mathrm{d}x=0.
				\end{aligned}
			\end{equation}
			It then follows from \eqref{lem-th51} and \eqref{lemn1} that
			\begin{align*}
				&\frac{1}{2}\frac{\mathrm{d}}{\mathrm{d}t}\Big(\|\nabla^\alpha\phi\|_{L^2}^2+\frac{1}{1+n}\|\nabla^\alpha\psi\|_{L^2}^2\Big)+\frac{1}{\tau}\|\nabla^\alpha\psi\|_{L^2}^2+\sigma\int_{\mathbb{R}^3} \nabla^\alpha {\rm{div}}w\cdot\nabla^\alpha \phi \mathrm{d}x\\
				&\leq C\delta(\|\nabla^\alpha n\|_{L^2}^2+\|\nabla^\alpha w\|_{L^2}^2+\|\nabla^{\alpha}\nabla w\|_{L^2}^2+\|\nabla^\alpha \text{div}w\|_{L^2}^2+\|\nabla^\alpha \phi\|_{L^2}^2+\|\nabla^{\alpha}\psi\|_{L^2}^2).
			\end{align*}
			Since  $\delta$ is small, we get
			\begin{align}
				&\frac{1}{2}\frac{\mathrm{d}}{\mathrm{d}t}\Big(\|\nabla^\alpha\phi\|_{L^2}^2+\frac{1}{1+n}\|\nabla^\alpha\psi\|_{L^2}^2\Big)+\frac{1}{2\tau}\|\nabla^\alpha\psi\|_{L^2}^2+\sigma\int_{\mathbb{R}^3} \nabla^\alpha {\rm{div}}w\cdot\nabla^\alpha \phi \mathrm{d}x\notag\\
				&\leq C\delta(\|\nabla^\alpha n\|_{L^2}^2+\|\nabla^\alpha w\|_{L^2}^2+\|\nabla^{\alpha}\nabla w\|_{L^2}^2+\|\nabla^\alpha \text{div}w\|_{L^2}^2+\|\nabla^\alpha \phi\|_{L^2}^2).\label{lem-th5}
			\end{align}
			This completes the proof of Lemma \ref{lem-th}.
		\end{proof}
		
		\begin{lem}\label{lem-nx}
			Assume that $(n,w,\phi,\psi)$ is the classical solution of the problem \eqref{nsac}-\eqref{non-f} satisfying the a priori assumption \eqref{a-priori est}, then it holds for $0\leq t\leq T$ and $1\leq \alpha\leq s$ that
			\begin{align}
				&\frac{c}{2}\|\nabla^\alpha n\|_{L^2}^2+\frac{\mathrm{d}}{\mathrm{d}t}\int_{\mathbb{R}^3} \nabla^{\alpha-1}w\cdot\nabla^\alpha n\mathrm{d}x\notag\\
				&\leq C(\|\nabla^\alpha w\|_{L^2}^2+\|\nabla^{\alpha}\nabla w\|_{L^2}^2
				+\|\nabla^{\alpha}{\rm{div}} w\|_{L^2}^2)+\frac{\sigma^2}{c}\|\nabla^\alpha \phi\|_{L^2}^2\notag\\
				&\quad +C\delta(\|\nabla^2 n\|_{H^1}^2+\|\nabla^2 w\|_{H^2}^2+\|\nabla^2\phi\|_{L^2}^2),\label{lem-nx1}
			\end{align}	
			where the positive constant $C$ is independent of time.
		\end{lem}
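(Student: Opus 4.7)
The plan is to test the momentum equation against a gradient of the density in the spirit of a Hodge-type cross estimate, so as to recover the missing dissipation of $\nabla^\alpha n$. Concretely, I apply $\nabla^{\alpha-1}$ to $\eqref{nsac}_2$, multiply the resulting identity by $\nabla^{\alpha} n$, and integrate over $\mathbb{R}^3$. The linear term $c\nabla^{\alpha-1}\nabla n \cdot \nabla^{\alpha} n = c\,|\nabla^{\alpha} n|^2$ produces the dissipation $c\|\nabla^{\alpha}n\|_{L^2}^2$. The time-derivative term is rewritten as
\[
\int_{\mathbb{R}^3} \nabla^{\alpha-1}\partial_t w \cdot \nabla^{\alpha} n\,\mathrm{d}x
= \frac{\mathrm{d}}{\mathrm{d}t}\int_{\mathbb{R}^3} \nabla^{\alpha-1}w\cdot\nabla^{\alpha}n\,\mathrm{d}x
- \int_{\mathbb{R}^3} \nabla^{\alpha-1}w\cdot\nabla^{\alpha}\partial_t n\,\mathrm{d}x,
\]
and then $\partial_t n$ is eliminated by substituting the continuity equation $\eqref{nsac}_1$, namely $\partial_t n = -c\,\mathrm{div}\, w + f_1$. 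After an integration by parts, the piece coming from $-c\,\mathrm{div}\,w$ is controlled by $C\|\nabla^{\alpha}w\|_{L^2}^2$, which is exactly the first contribution allowed on the right-hand side of \eqref{lem-nx1}.

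Next I handle the remaining linear terms. For the viscous part $\nu\nabla^{\alpha-1}\Delta w+(\nu+\eta)\nabla^{\alpha-1}\nabla\,\mathrm{div}\,w$ tested against $\nabla^{\alpha} n$, an integration by parts moves one derivative off $w$, and Young's inequality with parameter $\epsilon = c/8$ produces $\epsilon\|\nabla^{\alpha}n\|_{L^2}^2 + C_\epsilon(\|\nabla^{\alpha}\nabla w\|_{L^2}^2+\|\nabla^{\alpha}\mathrm{div}\,w\|_{L^2}^2)$, with the first summand absorbed into the left-hand side (leaving the sharp constant $c/2$). For the pressure-like term $\sigma\nabla^{\alpha-1}\nabla\phi = \sigma\nabla^{\alpha}\phi$ tested against $\nabla^{\alpha}n$, Young's inequality with optimizer $\epsilon=c/4$ yields exactly the coefficient $\sigma^2/c$ in front of $\|\nabla^{\alpha}\phi\|_{L^2}^2$ appearing in \eqref{lem-nx1}.

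The nonlinear contributions, i.e.\ the piece from $\nabla^{\alpha-1}f_2$ tested against $\nabla^{\alpha}n$ and the piece from $\int\nabla^{\alpha-1}w\cdot\nabla^{\alpha}f_1\,\mathrm{d}x$, are handled by the commutator estimates in Lemma \ref{lem-commu} together with the $L^\infty$ bounds \eqref{l-inf-est}. Each quadratic or higher-order product is split by putting one factor (always of size $O(\delta)$ by the a priori assumption) in $L^\infty$ via Sobolev embedding, and the remaining factor in $L^2$; this generates terms of the form $C\delta(\|\nabla^2 n\|_{H^1}^2+\|\nabla^2 w\|_{H^2}^2+\|\nabla^2\phi\|_{L^2}^2)$, matching the last bracket in \eqref{lem-nx1}. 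The extra order of derivative ($\nabla^2$ instead of $\nabla$) on $w$ reflects the appearance of $\Delta w$ and $\nabla\,\mathrm{div}\,w$ inside $f_2$.

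The main technical obstacle will be the quasilinear terms $\nu n \Delta w/(1+n)$, $(\nu+\eta)n\nabla\,\mathrm{div}\,w/(1+n)$, and $\sigma\phi\nabla n/(1+n)$ in $f_2$. For the first two, the factor $n/(1+n)$ must be pulled out via the commutator bound \eqref{lem-commu2} so that one does not lose regularity on the second-order derivatives of $w$; the commutator then contains only $\nabla(n/(1+n))$ in $L^\infty$, which is $O(\delta)$. The third term requires an analogous commutator manipulation to avoid placing a genuine $\nabla^{\alpha+1}n$ on the right-hand side, and here an extra integration by parts in the top-order residue $\int (\phi/(1+n))\nabla^{\alpha}\nabla n\cdot\nabla^{\alpha-1}w\,\mathrm{d}x$ is what ultimately keeps everything within the stated norms. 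Once all these pieces are assembled and the small absorbed terms are moved to the left, \eqref{lem-nx1} follows.
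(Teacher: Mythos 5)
Your proposal follows essentially the same route as the paper: testing $\nabla^{\alpha-1}$ of the momentum equation against $\nabla^{\alpha}n$, moving the time derivative onto the cross term $\int\nabla^{\alpha-1}w\cdot\nabla^{\alpha}n\,\mathrm{d}x$ and eliminating $\partial_t n$ via the continuity equation, absorbing the viscous and $\sigma\nabla^{\alpha}\phi$ contributions by Young's inequality with exactly the stated constants, and controlling the nonlinear terms $\nabla^{\alpha-1}f_2$ and $\nabla^{\alpha}f_1$ by the commutator estimates of Lemma \ref{lem-commu} and the $L^{\infty}$ bounds \eqref{l-inf-est}. The only inessential deviation is your worry about a top-order residue $\int(\phi/(1+n))\nabla^{\alpha}\nabla n\cdot\nabla^{\alpha-1}w\,\mathrm{d}x$ from the term $\sigma\phi\nabla n/(1+n)$; since here $\nabla^{\alpha-1}f_2$ is paired with $\nabla^{\alpha}n$, the leading piece is just $\int\frac{\phi}{1+n}|\nabla^{\alpha}n|^2\,\mathrm{d}x\le C\delta\|\nabla^{\alpha}n\|_{L^2}^2$ and no extra integration by parts is needed.
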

		\begin{proof}
			Applying the operator $\nabla^{\alpha-1}$ to $\eqref{nsac}_2$, multiplying the resulting equation by $\nabla^{\alpha} n$, and then integrating over $\mathbb{R}^3$, we obtain
			\begin{align}
				&c\|\nabla^\alpha n\|_{L^2}^2+\frac{\mathrm{d}}{\mathrm{d}t}\int_{\mathbb{R}^3} \nabla^{\alpha-1}w\cdot\nabla^\alpha n\mathrm{d}x\notag\\
				&=\int_{\mathbb{R}^3} (\nu\nabla^{\alpha-1}\Delta w+(\nu+\eta)\nabla^{\alpha}{\rm{div}}w-\sigma \nabla^{\alpha}\phi)\cdot\nabla^\alpha n\mathrm{d}x+\int_{\mathbb{R}^3} \nabla^{\alpha-1}f_2\cdot\nabla^{\alpha}n\mathrm{d}x\notag\\
				&\quad-c\int_{\mathbb{R}^3}(\nabla^{\alpha}{\rm{div}}w+\nabla^{\alpha}f_1)\cdot \nabla^{\alpha-1}w\mathrm{d}x\notag\\
				&\triangleq J_5+J_6+J_7.\label{lem-nx2}
			\end{align}
			By Young's inequality, the term $J_5$ can be estimated as
			\begin{align}
				|J_5|&\leq  (\nu+\eta)(\|\nabla^\alpha \nabla w\|_{L^2}+\|\nabla^\alpha \text{div}w\|_{L^2})\|\nabla^\alpha n\|_{L^2}+\sigma\|\nabla^\alpha \phi\|_{L^2}\|\nabla^\alpha n\|_{L^2}\notag\\
				&\leq\varepsilon\|\nabla^{\alpha}n\|_{L^2}^2+C_{\varepsilon}(\|\nabla^\alpha \nabla w\|_{L^2}^2+\|\nabla^\alpha \text{div}w\|_{L^2}^2)+\frac{c}{4}\|\nabla^\alpha n\|_{L^2}^2+\frac{\sigma^2}{c}\|\nabla^\alpha \phi\|_{L^2}^2\notag\\
				&\leq (\varepsilon+\frac{c}{4})\|\nabla^\alpha n\|_{L^2}^2+C(\|\nabla^\alpha \nabla w\|_{L^2}^2+\|\nabla^\alpha \text{div}w\|_{L^2}^2)+\frac{\sigma^2}{c}\|\nabla^\alpha \phi\|_{L^2}^2,
				\label{lem-nx3}
			\end{align}
			where $\varepsilon>0$ is arbitrary. For the term $J_6$, recalling the definition of $f_2$ in \eqref{non-f}, we  deal with the first term of $J_6$ as follows.
			\begin{align*}
				&\left|\int_{\mathbb{R}^3}\nabla^{\alpha-1}(w\cdot\nabla w)\cdot\nabla^\alpha n\mathrm{d}x\right|\\
				&\leq \left|\int_{\mathbb{R}^3}[\nabla^{\alpha-1}(w\cdot\nabla w)-w\cdot\nabla^\alpha w]\cdot\nabla^\alpha n\mathrm{d}x\right|+\left|\int_{\mathbb{R}^3}w\cdot\nabla^\alpha w\cdot\nabla^\alpha n\mathrm{d}x\right|\\
				&\leq C(\|\nabla w\|_{L^\infty}\|\nabla^{\alpha-1}w\|_{L^2}+\|\nabla^{\alpha-1}w\|_{L^2}\|\nabla w\|_{L^\infty})\|\nabla^\alpha n\|_{L^2}+C\|w\|_{L^\infty}\|\nabla^\alpha w\|_{L^2}\|\nabla^\alpha n\|_{L^2}\\
				&\leq C(\|\nabla^2 w\|_{H^1}\|\nabla^{\alpha-1}w\|_{L^2}+\|\nabla^{\alpha-1}w\|_{L^2}\|\nabla^2 w\|_{H^1})\|\nabla^\alpha n\|_{L^2}+C\|\nabla w\|_{H^1}\|\nabla^\alpha w\|_{L^2}\|\nabla^\alpha n\|_{L^2}\\
				&\leq C\delta\|\nabla^2w\|_{H^1}\|\nabla^\alpha n\|_{L^2}+C\delta\|\nabla^\alpha w\|_{L^2}\|\nabla^\alpha n\|_{L^2}\\
				&\leq C\delta(\|\nabla^\alpha n\|_{L^2}^2+\|\nabla^2w\|_{H^1}^2+\|\nabla^\alpha w\|_{L^2}^2),
			\end{align*}
			where we have used the \emph{a priori} assumption \eqref{a-priori est} that for $1\leq \alpha \leq s$ that $\|\nabla^{\alpha-1}w\|_{L^2}\leq \delta$. The remainders of $J_6$ can be estimated similarly. Thus, we have
			\begin{align}
				|J_6|&\leq \Big|\int_{\mathbb{R}^3} \nabla^{\alpha-1}\Big(-cw\cdot\nabla w+\frac{(cn-\sigma\phi)\nabla n}{1+n}-\frac{\nu n\Delta w+(\nu+\eta)n\nabla{\rm{div}}w}{1+n}\Big)\cdot \nabla^{\alpha}n\mathrm{d}x\Big|\notag\\
				&\leq C\delta(\|\nabla^\alpha n\|_{L^2}^2+\|\nabla^{\alpha} w\|_{L^2}^2
				+\|\nabla^\alpha\nabla w\|_{L^2}^2\notag\\
				&\quad+\|\nabla^\alpha\text{div} w\|_{L^2}^2+\|\nabla^2n\|_{H^1}^2+\|\nabla^2 w\|_{H^2}^2+\|\nabla^2\phi\|_{H^1}^2).
			\end{align}
			Finally, we deal with the term $J_{7}$. We have
			\begin{align}
				|J_7|
				&\leq \Big| c\int_{\mathbb{R}^3}(\nabla^{\alpha}w+\nabla^{\alpha}(nw))\cdot \nabla^{\alpha}w \mathrm{d}x\Big|\notag\\
				&\leq C\|\nabla^\alpha w\|_{L^2}^2+C(\|n\|_{L^\infty}\|\nabla^{\alpha} w\|_{L^2}+\|w\|_{L^\infty}\|\nabla^\alpha n\|_{L^2})\|\nabla^\alpha w\|_{L^2}\notag\\
				&\leq C\|\nabla^\alpha w\|_{L^2}^2+C\delta(\|\nabla^\alpha n\|_{L^2}^2+\|\nabla^\alpha w\|_{L^2}^2).\label{lem-nx4}
			\end{align}
			
			Combining the above estimates together yields
			\begin{align*}
				&c\|\nabla^\alpha n\|_{L^2}^2+\frac{\mathrm{d}}{\mathrm{d}t}\int_{\mathbb{R}^3} \nabla^{\alpha-1}w\cdot\nabla^\alpha n\mathrm{d}x\\
				&\leq (C\delta+\varepsilon+\frac{c}{4})\|\nabla^{\alpha}n\|_{L^2}^2+C(\|\nabla^\alpha w\|_{L^2}^2+\|\nabla^{\alpha}\nabla w\|_{L^2}^2
				+\|\nabla^{\alpha}\text{div} w\|_{L^2}^2)+\frac{\sigma^2}{c}\|\nabla^\alpha \phi\|_{L^2}^2\\
				&\quad +C\delta(\|\nabla^2 n\|_{H^1}^2+\|\nabla^2 w\|_{H^2}^2+\|\nabla^2\phi\|_{H^1}^2).
			\end{align*}
			Due to the smallness of $\delta$, we can chose $\varepsilon$ sufficiently small to get \eqref{lem-nx1}. This completes the proof of Lemma \ref{lem-nx}.
		\end{proof}

		\begin{lem}\label{lem-1nx}
			Assume $(n,w,\phi,\psi)$ that is the classical solution of the problem \eqref{nsac}-\eqref{non-f} satisfying the a priori assumption \eqref{a-priori est}, then it holds for $0\leq t\leq T$ and $1\leq \alpha\leq s$ that
			\begin{align}
				&\frac{3b}{4}\|\nabla^\alpha \phi\|_{L^2}^2+\frac{\mathrm{d}}{\mathrm{d}t}\int_{\mathbb{R}^3} \nabla^{\alpha-1}\psi\cdot\nabla^\alpha \phi \mathrm{d}x\notag\\
				&\leq C\delta(\|\nabla^\alpha n\|_{L^2}^2+\|\nabla^\alpha\nabla w\|_{L^2}^2+\|\nabla^\alpha\emph{div} w\|_{L^2}^2)\notag\\
				&\quad+C(\|\nabla^\alpha w\|_{L^2}^2+\|\nabla^{\alpha-1}\psi\|_{L^2}^2+\|\nabla^\alpha\psi\|_{L^2}^2),\label{lem-1nx1}
			\end{align}	
			where the positive constant $C$ is independent of time.
		\end{lem}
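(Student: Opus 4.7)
The goal is to recover a term $\tfrac{3b}{4}\|\nabla^\alpha\phi\|_{L^2}^2$ that is not present in the direct energy estimate (Lemma \ref{lem-th}) but is needed to close the dissipation for $\phi$. The natural source is the equation $\eqref{nsac}_4$, which reads $\partial_t\psi+\tfrac{1}{\tau}\psi+b\nabla\phi=0$; this lets us exchange $b\nabla\phi$ for $-\partial_t\psi-\tfrac{1}{\tau}\psi$, mirroring how Lemma \ref{lem-nx} exploits $\eqref{nsac}_2$ to gain a factor of $\|\nabla^\alpha n\|_{L^2}^2$.

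Concretely, I would apply $\nabla^{\alpha-1}$ to $\eqref{nsac}_4$ and test against $\nabla^\alpha\phi$. This yields
\begin{equation*}
b\|\nabla^\alpha\phi\|_{L^2}^2+\int_{\mathbb R^3}\nabla^{\alpha-1}\partial_t\psi\cdot\nabla^\alpha\phi\,\mathrm dx+\frac{1}{\tau}\int_{\mathbb R^3}\nabla^{\alpha-1}\psi\cdot\nabla^\alpha\phi\,\mathrm dx=0.
\end{equation*}
The second integral is rewritten as $\tfrac{\mathrm d}{\mathrm dt}\int\nabla^{\alpha-1}\psi\cdot\nabla^\alpha\phi\,\mathrm dx-\int\nabla^{\alpha-1}\psi\cdot\nabla^\alpha\partial_t\phi\,\mathrm dx$, and then I substitute $\partial_t\phi=-\sigma\,\mathrm{div}\,w-b\,\mathrm{div}\,\psi+f_3$ coming from $\eqref{nsac}_3$. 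This produces exactly the time-derivative term stated in the lemma, plus four spatial integrals to be controlled.

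For the spatial integrals I would proceed as follows. For $\sigma\int\nabla^{\alpha-1}\psi\cdot\nabla^\alpha\mathrm{div}\,w\,\mathrm dx$, a single integration by parts moves one derivative onto $\psi$, giving at most $\sigma\|\nabla^\alpha\psi\|_{L^2}\|\nabla^\alpha w\|_{L^2}$, which Young splits into the allowed $C\|\nabla^\alpha\psi\|_{L^2}^2+C\|\nabla^\alpha w\|_{L^2}^2$. For $b\int\nabla^{\alpha-1}\psi\cdot\nabla^\alpha\mathrm{div}\,\psi\,\mathrm dx$, an analogous integration by parts plus Cauchy--Schwarz bounds it by $C\|\nabla^\alpha\psi\|_{L^2}^2$. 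The damping term $\tfrac{1}{\tau}\int\nabla^{\alpha-1}\psi\cdot\nabla^\alpha\phi\,\mathrm dx$ is absorbed by Young's inequality, using a small parameter chosen to leave a clean $\tfrac{3b}{4}\|\nabla^\alpha\phi\|_{L^2}^2$ on the left (contributing $C\|\nabla^{\alpha-1}\psi\|_{L^2}^2$ on the right). The nonlinear contribution $\int\nabla^{\alpha-1}\psi\cdot\nabla^\alpha f_3\,\mathrm dx$ is handled by first integrating by parts to $-\int\nabla^{\alpha-1}\mathrm{div}\,\psi\cdot\nabla^{\alpha-1}f_3\,\mathrm dx$ and then invoking Lemma \ref{lem-commu}, \eqref{l-inf-est} and the \emph{a priori} bound \eqref{a-priori est} to estimate $\|\nabla^{\alpha-1}f_3\|_{L^2}$; each quartic term in $f_3$ carries at least one factor of $(n,w,\phi,\psi)$ measured in $L^\infty$ and hence is $O(\delta)$, yielding the $C\delta$ terms on the right-hand side of \eqref{lem-1nx1}.

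The main technical obstacle is the nonlinear term involving $\tfrac{bn\,\mathrm{div}\,\psi}{1+n}$ inside $f_3$, because it contains $\alpha$ derivatives of $\psi$ after differentiation and must be kept within $\|\nabla^\alpha\psi\|_{L^2}^2$ rather than producing an uncontrollable higher-order contribution. The trick is the same as in the treatment of $J_{422}$ in Lemma \ref{lem-th}: after the initial integration by parts, the worst piece reduces to an $L^\infty$ factor $\|n/(1+n)\|_{L^\infty}=O(\delta)$ multiplied by $\|\nabla^\alpha\psi\|_{L^2}^2$, which is admissible. Combining all estimates and choosing the Young parameter small enough to leave $\tfrac{3b}{4}\|\nabla^\alpha\phi\|_{L^2}^2$ on the left delivers \eqref{lem-1nx1}.
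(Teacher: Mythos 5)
Your proposal is correct and follows essentially the same route as the paper: testing $\nabla^{\alpha-1}$ of $\eqref{nsac}_4$ against $\nabla^\alpha\phi$, converting the time-derivative term via $\eqref{nsac}_3$ to produce the cross term $\frac{\mathrm{d}}{\mathrm{d}t}\int\nabla^{\alpha-1}\psi\cdot\nabla^\alpha\phi\,\mathrm{d}x$, and then handling the resulting integrals (the paper's $J_8$, $J_9$, $J_{10}$) by integration by parts, Young's inequality with a small parameter to retain $\frac{3b}{4}\|\nabla^\alpha\phi\|_{L^2}^2$, and the commutator estimates together with \eqref{a-priori est} for the $f_3$ contribution. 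The only cosmetic difference is that you estimate $\|\nabla^{\alpha-1}f_3\|_{L^2}$ against $\|\nabla^\alpha\psi\|_{L^2}$ where the paper symmetrizes between the two forms, which changes nothing in the outcome.
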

		\begin{proof}
			Applying the operator $\nabla^{\alpha-1}$ to $\eqref{nsac}_4$, multiplying the resulting equation by $\nabla^{\alpha} \phi$, and integrating over $\mathbb{R}^3$, we obtain
			\begin{align}
				&b\|\nabla^\alpha \phi\|_{L^2}^2+\frac{\mathrm{d}}{\mathrm{d}t}\int_{\mathbb{R}^3} \nabla^{\alpha-1}\psi\cdot\nabla^\alpha \phi \mathrm{d}x\notag\\
				&=-\int_{\mathbb{R}^3}\nabla^\alpha(\sigma\text{div}w+b\,\text{div}\psi)\cdot\nabla^{\alpha-1}\psi\mathrm{d}x+\int_{\mathbb{R}^3}\nabla^{\alpha}f_3\cdot\nabla^{\alpha-1}\psi\mathrm{d}x-\frac{1}{\tau}\int_{\mathbb{R}^3}\nabla^\alpha \phi \cdot\nabla^{\alpha-1}\psi\mathrm{d}x\notag\\
				&\triangleq J_8+J_9+J_{10}.\label{lem-nx12}
			\end{align}
			By Young's inequality, the term $J_8$ is estimated as follows
			\begin{align}
				|J_8|&\leq \sigma\|\nabla^\alpha w\|_{L^2}\|\nabla^{\alpha}\psi\|_{L^2}
				+b\|\nabla^\alpha\psi\|_{L^2}^2\notag\\
				&\leq\frac{1}{2}\sigma\|\nabla^{\alpha}\psi\|_{L^2}^2+\frac{1}{2}\sigma\|\nabla^\alpha w\|_{L^2}^2+b\|\nabla^\alpha\psi\|_{L^2}^2.\label{lem-nx13}
			\end{align}
			For the term $J_9$, we have
			\begin{align}
				|J_9|&\leq \Big|\int_{\mathbb{R}^3} \nabla^{\alpha-1}\Big(-cw\cdot\nabla\phi-c(\gamma-1)\phi{\rm{div}}w+\frac{\sigma(2\nu|\mathbb{D}w|^2+\eta({\rm{div}}w)^2)}{c(1+n)}+\frac{b n\text{div} \psi}{1+n}\Big)\cdot\nabla^{\alpha}\psi\mathrm{d}x\Big|\notag\\
				&\leq \Big| \int_{\mathbb{R}^3} \nabla^{\alpha}\Big(-cw\cdot\nabla\phi-c(\gamma-1)\phi{\rm{div}}w+\frac{\sigma(2\nu|\mathbb{D}w|^2+\eta({\rm{div}}w)^2)}{c(1+n)}+\frac{b n\text{div} \psi}{1+n}\Big)\cdot\nabla^{\alpha-1}\psi\mathrm{d}x\Big|\notag\\
				&\leq C\delta (\|\nabla^\alpha n\|_{L^2}^2+\|\nabla^\alpha w\|_{L^2}^2+\|\nabla^\alpha\nabla w\|_{L^2}^2+\|\nabla^\alpha\text{div} w\|_{L^2}^2\notag\\
				&\quad+\|\nabla^\alpha\phi\|_{L^2}^2+\|\nabla^{\alpha-1}\psi\|_{L^2}^2+\|\nabla^\alpha\psi\|_{L^2}^2).
			\end{align}
			Finally, we deal with the term $J_{10}$ as follows:
			\begin{equation}\label{lem-nx14}
				\begin{aligned}
					|J_{10}|&\leq \frac{1}{\tau}\|\nabla^\alpha\phi\|_{L^2}\|\nabla^{\alpha-1}\psi\|_{L^2}\leq\frac{1}{4}b\|\nabla^\alpha\phi\|_{L^2}^2+ \frac{1}{b\tau^2}
					\|\nabla^{\alpha-1}\psi\|_{L^2}^2.
				\end{aligned}
			\end{equation}
			Combining the above estimates and noticing the smallness of $\delta$ yield
			\begin{align*}
				\frac{3b}{4}\|\nabla^\alpha \phi\|_{L^2}^2+\frac{\mathrm{d}}{\mathrm{d}t}\int_{\mathbb{R}^3} \nabla^{\alpha-1}\psi\cdot\nabla^\alpha \phi \mathrm{d}x
				&\leq C\delta(\|\nabla^\alpha n\|_{L^2}^2+\|\nabla^\alpha\nabla w\|_{L^2}^2+\|\nabla^\alpha\text{div} w\|_{L^2}^2)\\
				&\quad+C(\|\nabla^\alpha w\|_{L^2}^2+\|\nabla^{\alpha-1}\psi\|_{L^2}^2+\|\nabla^\alpha\psi\|_{L^2}^2).
			\end{align*}
			This completes the proof of Lemma \ref{lem-1nx}.
		\end{proof}

		With the above lemmas in hand, we are now in a position to obtain the uniform bound of the classical solution under the \emph{a priori} assumption \eqref{a-priori est}.
		\begin{prop}\label{pro-est}
			Let $s\geq 3$ be an integer and $\delta$ be a sufficiently small positive constant. Under the conditions of Theorem \ref{thm1},  assume further that $(n,w,\phi,\psi)$ is the classical solution of the problem \eqref{nsac}-\eqref{non-f} for any time $T>0$ and satisfying the a priori assumption
			$$
			\sup_{0\leq t\leq T}\|(n,w,\phi,\psi)(t)\|_{H^s}\leq \delta.
			$$
			Then it holds
			\begin{equation}\label{pro-est1}
				\|(n,w,\phi,\psi)(t)\|_{H^s}^2+\int_0^t\left(\|\nabla (n,\phi)(r)\|_{H^{s-1}}^2+\|\nabla w(r)\|_{H^{s}}^2+\|\psi(r)\|_{H^{s}}^2\right)\mathrm{d}r\leq C\delta_0^2,
			\end{equation}	
			where $\delta_0$ is a small positive constant given by \eqref{in-data-e0}.	
		\end{prop}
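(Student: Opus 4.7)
The plan is to assemble a single Lyapunov-type functional $\mathcal{E}(t)\sim \|(n,w,\phi,\psi)(t)\|_{H^s}^2$ out of the estimates of Lemmas \ref{lem-b}--\ref{lem-1nx} and to derive an inequality of the form $\tfrac{d}{dt}\mathcal{E}(t)+c_0\mathcal{D}(t)\le 0$, where the dissipation $\mathcal{D}(t)$ controls $\|\nabla(n,\phi)\|_{H^{s-1}}^2+\|\nabla w\|_{H^s}^2+\|\psi\|_{H^s}^2$. Integrating in time and invoking the $L^2$ bound from Lemma \ref{lem-b} then yields \eqref{pro-est1}.

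First, I would sum Lemmas \ref{lem-n}, \ref{lem-w} and \ref{lem-th} over $1\le\alpha\le s$. The crucial structural observation is that the three cross terms $c\int\nabla^\alpha\mathrm{div}w\cdot\nabla^\alpha n\,dx$ (from Lemma \ref{lem-n}), $-c\int\nabla^\alpha\mathrm{div}w\cdot\nabla^\alpha n\,dx -\sigma\int\nabla^\alpha\mathrm{div}w\cdot\nabla^\alpha\phi\,dx$ (from Lemma \ref{lem-w}), and $\sigma\int\nabla^\alpha\mathrm{div}w\cdot\nabla^\alpha\phi\,dx$ (from Lemma \ref{lem-th}) cancel identically, leaving
\begin{align*}
\tfrac{1}{2}\tfrac{d}{dt}\!\!\!\sum_{1\le\alpha\le s}\!\!\bigl(\|\nabla^\alpha n\|_{L^2}^2+\|\nabla^\alpha w\|_{L^2}^2+\|\nabla^\alpha\phi\|_{L^2}^2+\tfrac{1}{1+n}\|\nabla^\alpha\psi\|_{L^2}^2\bigr)+\tfrac{3\nu}{4}\|\nabla^2 w\|_{H^{s-1}}^2+\tfrac{1}{2\tau}\|\nabla\psi\|_{H^{s-1}}^2&\\
\le C\delta\bigl(\|\nabla n\|_{H^{s-1}}^2+\|\nabla w\|_{H^{s}}^2+\|\nabla\phi\|_{H^{s-1}}^2\bigr).&
\end{align*}
This provides dissipation for $\nabla w$ and $\psi$ but none for $n$ or $\phi$.

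To generate $\|\nabla n\|_{H^{s-1}}^2$ and $\|\nabla\phi\|_{H^{s-1}}^2$ in the dissipation, I would add in the bounds of Lemmas \ref{lem-nx} and \ref{lem-1nx} multiplied by sufficiently small positive constants $\lambda_1,\lambda_2>0$ and summed over $1\le\alpha\le s$. Choosing $\lambda_1,\lambda_2$ small (depending only on $\nu,\eta,c,\sigma,b,\tau$) guarantees that (i) the new $\frac{\sigma^2}{c}\|\nabla^\alpha\phi\|_{L^2}^2$ term generated by Lemma \ref{lem-nx} is dominated by $\lambda_2\cdot\frac{3b}{4}\|\nabla^\alpha\phi\|_{L^2}^2$ from Lemma \ref{lem-1nx}, (ii) the auxiliary $\|\nabla^\alpha w\|_{L^2}^2$ and $\|\nabla^\alpha\psi\|_{L^2}^2$ terms coming from the right-hand sides of Lemmas \ref{lem-nx}--\ref{lem-1nx} are absorbed by the already-established dissipation from the previous step, and (iii) the mixed cross quantities $\lambda_1\int\nabla^{\alpha-1}w\cdot\nabla^\alpha n\,dx+\lambda_2\int\nabla^{\alpha-1}\psi\cdot\nabla^\alpha\phi\,dx$, when added to the quadratic part above, yield an energy equivalent to $\|(n,w,\phi,\psi)\|_{H^s}^2$ by Cauchy--Schwarz. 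Combining this with the $L^2$ estimate from Lemma \ref{lem-b} for the zeroth-order part, I obtain
\begin{equation*}
\tfrac{d}{dt}\mathcal{E}(t)+c_0\bigl(\|\nabla(n,\phi)\|_{H^{s-1}}^2+\|\nabla w\|_{H^{s}}^2+\|\psi\|_{H^{s}}^2\bigr)\le 0,
\end{equation*}
with $\mathcal{E}(t)\sim\|(n,w,\phi,\psi)(t)\|_{H^s}^2$, once $\delta$ is taken small enough to absorb the $C\delta(\cdots)$ terms into the left-hand side.

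Integrating from $0$ to $t$ and using $\mathcal{E}(0)\lesssim\|(n_0,w_0,\phi_0,\psi_0)\|_{H^s}^2\le C\delta_0^2$ produces \eqref{pro-est1}. The main technical hurdle will be the careful bookkeeping in step three: one must verify that every coefficient on the right-hand side of the combined inequality can be made strictly smaller than the corresponding dissipation coefficient on the left, uniformly in the relaxation time $\tau$. In particular, the $\frac{1}{b\tau^2}\|\nabla^{\alpha-1}\psi\|_{L^2}^2$ term arising in \eqref{lem-1nx1} via the estimate of $J_{10}$ must be absorbed into $\frac{1}{2\tau}\|\nabla^{\alpha-1}\psi\|_{L^2}^2$ (available from the previous $\alpha-1$ step, or from Lemma \ref{lem-b} when $\alpha=1$), which is precisely why the weight $\lambda_2$ must be chosen small relative to $\tau$ but independent of the smallness parameter $\delta_0$; this is the place where the absence of any upper restriction on $\tau$ pays off, and where the argument of \cite{MR3518771} is replaced by a genuinely $\tau$-free scheme.
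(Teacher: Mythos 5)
Your proposal is correct and follows essentially the same route as the paper: cancel the antisymmetric cross terms by summing Lemmas \ref{lem-n}--\ref{lem-th}, recover dissipation for $\nabla n$ and $\nabla\phi$ by adding small multiples of Lemmas \ref{lem-nx} and \ref{lem-1nx} with the ratio of the two weights constrained exactly as you describe (the paper takes $\frac{bc}{4\sigma^2}\times\eqref{lem-nx1}+\eqref{lem-1nx1}$ so that $\frac{\sigma^2}{c}\|\nabla^\alpha\phi\|_{L^2}^2$ is beaten by $\frac{3b}{4}\|\nabla^\alpha\phi\|_{L^2}^2$), and close the lowest-order terms $\|\nabla w\|_{L^2}^2+\|\psi\|_{L^2}^2$ via Lemma \ref{lem-b} after integrating in time. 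The only cosmetic difference is that the paper integrates the combined inequality first and then invokes the integrated form of Lemma \ref{lem-b}, rather than assembling a single pointwise-in-time Lyapunov inequality.
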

		
		\begin{proof}
			We conclude by Lemmas \ref{lem-n}-\ref{lem-th} that
			\begin{equation*}
				\begin{aligned}
					&\frac{1}{2}\frac{\mathrm{d}}{\mathrm{d}t}\left(\|\nabla^\alpha n\|_{L^2}^2+\|\nabla^\alpha w\|_{L^2}^2+\|\nabla^\alpha \phi\|_{L^2}^2+\frac{1}{1+n}\|\nabla^{\alpha}\psi\|_{L^2}^2\right)\\
					&\quad+\frac{3\nu}{4}\|\nabla^{\alpha}\nabla w\|_{L^2}^2+\frac{3(\nu+\eta)}{4}\|\nabla^\alpha \text{div} w\|_{L^2}^2
					+\frac{1}{2\tau}\|\nabla^{\alpha} \psi\|_{L^2}^2\\
					&\leq C\delta (\|\nabla^\alpha n\|_{L^2}^2+\|\nabla^\alpha w\|_{L^2}^2+\|\nabla^{\alpha}\nabla w\|_{L^2}^2+\|\nabla^{\alpha}\text{div} w\|_{L^2}^2+\|\nabla^\alpha \phi\|_{L^2}^2).
				\end{aligned}
			\end{equation*}
			Since $\delta$ is sufficiently small, one has
			\begin{align}
				&\frac{1}{2}\frac{\mathrm{d}}{\mathrm{d}t}\left(\|\nabla^\alpha n\|_{L^2}^2+\|\nabla^\alpha w\|_{L^2}^2+\|\nabla^\alpha \phi\|_{L^2}^2+\frac{1}{1+n}\|\nabla^{\alpha}\psi\|_{L^2}^2\right)\notag\\
				&\quad+\frac{\nu}{2}\|\nabla^{\alpha}\nabla w\|_{L^2}^2+\frac{\nu+\eta}{2}\|\nabla^\alpha \text{div} w\|_{L^2}^2
				+\frac{1}{2\tau}\|\nabla^{\alpha} \psi\|_{L^2}^2\notag\\
				&\leq C\delta (\|\nabla^\alpha n\|_{L^2}^2+\|\nabla^\alpha w\|_{L^2}^2+\|\nabla^\alpha \phi\|_{L^2}^2).\label{THZ2}
			\end{align}
			For the above inequalities, summing $\alpha$ from $1$ to $s$ gives
			\begin{align}
				&\frac{1}{2}\frac{\mathrm{d}}{\mathrm{d}t}\left(\|\nabla n\|_{H^{s-1}}^2+\|\nabla w\|_{H^{s-1}}^2+\|\nabla \phi\|_{H^{s-1}}^2+\frac{1}{1+n}\|\nabla\psi\|_{H^{s-1}}^2\right)+\frac{\nu}{2}\|\nabla^2 w\|_{H^{s-1}}^2+\frac{1}{2\tau}\|\nabla \psi\|_{H^{s-1}}^2\notag\\
				&\leq C\delta (\|\nabla n\|_{H^{s-1}}^2+\|\nabla w\|_{H^{s-1}}^2+\|\nabla \phi\|_{H^{s-1}}^2).\label{pro-est2}
			\end{align}
			Similarly, by Lemmas \ref{lem-nx} and \ref{lem-1nx} , taking $\eqref{lem-nx1}\times\frac{bc}{4\sigma^2}+\eqref{lem-1nx1}$, and we sum $\alpha$ from 1 to $s$ to obtain that
			\begin{align*}
				&\frac{bc^2}{8\sigma^2}\|\nabla n\|_{H^{s-1}}^2+\frac{3b}{4}\|\nabla\phi\|_{H^{s-1}}^2+\sum_{\alpha=1}^s\frac{\mathrm{d}}{\mathrm{d}t}\left(\frac{bc}{4\sigma^2}\int_{\mathbb{R}^3} \nabla^{\alpha-1}w\cdot\nabla^\alpha n\mathrm{d}x+\int_{\mathbb{R}^3} \nabla^{\alpha-1}\psi\cdot\nabla^\alpha \phi \mathrm{d}x\right)\\
				&\leq C(\|\nabla w\|_{H^{s}}^2+\|\psi\|_{H^s}^2)+C\delta \|\nabla n\|_{H^{s-1}}^2+(C\delta+\frac{b}{4})\|\nabla\phi\|_{H^{s-1}}^2.		
			\end{align*}
			Since $\delta$ is small enough, we have
			\begin{align}
				&\frac{bc^2}{16\sigma^2}\|\nabla n\|_{H^{s-1}}^2+\frac{b}{4}\|\nabla\phi\|_{H^{s-1}}^2+\sum_{\alpha=1}^s\frac{\mathrm{d}}{\mathrm{d}t}\left(\frac{bc}{4\sigma^2}\int_{\mathbb{R}^3} \nabla^{\alpha-1}w\cdot\nabla^\alpha n\mathrm{d}x+\int_{\mathbb{R}^3} \nabla^{\alpha-1}\psi\cdot\nabla^\alpha \phi \mathrm{d}x\right)\notag\\
				&\leq C(\|\nabla w\|_{H^{s}}^2+\|\psi\|_{H^s}^2).\label{pro-est3}
			\end{align}
			Choosing $\beta_1$ sufficiently small, then taking $(\ref{pro-est2} )+\beta_1\times(\ref{pro-est3} )$ yields
			$$
			\begin{aligned}
				&	\frac{\mathrm{d}}{\mathrm{d}t}\Big\{\frac{1}{2}\Big(\|\nabla n\|_{H^{s-1}}^2+\|\nabla w\|_{H^{s-1}}^2+\|\nabla \phi\|_{H^{s-1}}^2+\frac{1}{1+n}\| \nabla\psi\|_{H^{s}}^2\Big)\\
				&\quad+\beta_1\sum_{\alpha=1}^s\Big(\frac{bc}{4\sigma^2}\int_{\mathbb{R}^3} \nabla^{\alpha-1}w\cdot\nabla^\alpha n\mathrm{d}x+\int_{\mathbb{R}^3} \nabla^{\alpha-1}\psi\cdot\nabla^\alpha \phi \mathrm{d}x\Big)\Big\}\\
				&\quad +\frac{bc^2\beta_1}{16\sigma^2}\|\nabla n\|_{H^{s-1}}^2+\frac{\nu}{2}\|\nabla^2 w\|_{H^{s-1}}^2+\frac{b\beta_1}{4}\|\nabla\phi\|_{H^{s-1}}^2+\frac{1}{2\tau}\|\nabla  \psi\|_{H^{s-1}}^2\\
				&\leq C\delta (\|\nabla n\|_{H^{s-1}}^2+\|\nabla w\|_{H^{s-1}}^2+\|\nabla \phi\|_{H^{s-1}}^2)+C\beta_1(\|\nabla w\|_{H^{s}}^2+\|\psi\|_{H^s}^2).
			\end{aligned}
			$$
			Since $\beta_1$ and $\delta$ are sufficiently small,  one has
			$$
			\begin{aligned}
				&	\frac{\mathrm{d}}{\mathrm{d}t}\Big\{\frac{1}{2}\Big(\|\nabla n\|_{H^{s-1}}^2+\|\nabla w\|_{H^{s-1}}^2+\|\nabla \phi\|_{H^{s-1}}^2+\frac{1}{1+n}\| \nabla\psi\|_{H^{s}}^2\Big)\\
				&\quad+\beta_1\sum_{\alpha=1}^s\Big(\frac{bc}{4\sigma^2}\int_{\mathbb{R}^3} \nabla^{\alpha-1}w\cdot\nabla^\alpha n\mathrm{d}x+\int_{\mathbb{R}^3} \nabla^{\alpha-1}\psi\cdot\nabla^\alpha \phi \mathrm{d}x\Big)\Big\}\\
				&\quad+\frac{bc^2\beta_1}{32\sigma^2}\|\nabla n\|_{H^{s-1}}^2+\frac{\nu}{4}\|\nabla^2 w\|_{H^{s-1}}^2+\frac{b\beta_1}{8}\|\nabla\phi\|_{H^{s-1}}^2+\frac{1}{4\tau}\|\nabla  \psi\|_{H^{s-1}}^2\\
				&\leq C\beta_1(\|\nabla w\|_{L^2}^2+\|\psi\|_{L^2}^2).
			\end{aligned}
			$$
			We integrate the above inequality over $(0,t)$ to obtain
			\begin{align*}
				&\frac{1}{2}\Big(\|\nabla(n,w,\phi)(t)\|_{H^{s-1}}^2+\frac{1}{1+n}\|\nabla\psi(t)\|_{H^{s-1}}^2\Big)\\
				&\quad +\beta_1\sum_{\alpha=1}^s\Big(\frac{bc}{4\sigma^2}\int_{\mathbb{R}^3} \nabla^{\alpha-1}w\cdot\nabla^\alpha n\mathrm{d}x+\int_{\mathbb{R}^3} \nabla^{\alpha-1}\psi\cdot\nabla^\alpha \phi \mathrm{d}x\Big)\\
				&\quad+C_1\int_0^t\Big(\|\nabla n(r)\|_{H^{s-1}}^2+\|\nabla^2w(r)\|_{H^{s-1}}^2+\|\nabla \phi(r)\|_{H^{s-1}}^2+\|\nabla\psi(r)\|_{H^{s-1}}^2\Big)\mathrm{d}r\\
				& \leq C\int_0^t(\|\nabla w(r)\|_{L^2}^2+\|\psi(r)\|_{L^2}^2)\mathrm{d}r,
			\end{align*}
			where $C_1$ is a positive constant. Then, thanks to the smallness of $\beta_1$ and Lemma \ref{lem-b}, we arrive at
			\begin{equation*}
				\|(n,w,\phi,\psi)(t)\|_{H^s}^2+\int_0^t\left(\|\nabla (n,\phi)(r)\|_{H^{s-1}}^2+\|\nabla w(r)\|_{H^{s}}^2+\|\psi(r)\|_{H^{s}}^2\right)\mathrm{d}r\leq C\delta_0^2.
			\end{equation*}	
			Thus, we finish the proof of Proposition \ref{pro-est}.
		\end{proof}

		\section{Large-time behavior of the classical solution}\label{Sec4}
		\subsection{ Time-decay rates  for the linearized  problem }\label{Sec4-1}
		\hspace{2em}In this subsection, we consider the linearized case of the problem \eqref{nsac}-\eqref{non-f} as follows
		\begin{equation}\label{linear-nsac}
			\left\{
			\begin{aligned}
				&\partial_tn +c\,{\rm{div}}w=0, \\
				&\partial_tw+c\nabla n+\sigma \nabla \phi=\nu\Delta w+(\nu+\eta)\nabla{\rm{div}}w,\\
				&\partial_t\phi+\sigma {\rm{div}} w+b\text{div}\psi=0,\\
				&\partial_t\psi+\frac{1}{\tau}\psi+b\nabla\phi=0,
			\end{aligned}
			\right.
		\end{equation}
		with the initial data
		\begin{equation}\label{linear-in}
			(n,w,\phi,\psi)(x,0)=(n_{0},w_{0},\phi_0,\psi_{0})(x),
		\end{equation}
		and the far-field states
		\begin{equation}\label{TT1}
			\lim_{|x|\rightarrow \infty}(n,w,\phi,\psi)(x,t)=(0,0,0,0).
		\end{equation}
		Let $U=(n,w,\phi,\psi)^\top, U_0=(n_0,w_0,\phi_0,\psi_0)^\top$. The problem \eqref{linear-nsac}-\eqref{TT1} can be rewritten as
		\begin{equation}\label{I1}
			\partial_tU+\mathcal{L}U=0,\quad U(x,0)=U_0(x),\quad t\geq 0,
		\end{equation}
		where the linear operator $\mathcal{L}$ is defined as
		$$
		\mathcal{L}=\left(\begin{array}{cccc}
			0 & c\,\rm{div} & 0  &0\\
			c\nabla & -\nu\Delta-(\nu+\eta)\nabla{\rm{div}}& \sigma \nabla &0 \\
			0 & \sigma \rm{div} & 0 &b\text{div}\\
			0& 0& b\nabla & \frac{1}{\tau}\mathbb{I}_3
		\end{array}\right).
		$$
		Let $G(x,t)=(G_{ij})_{4\times 4}$ be the Green function of \eqref{I1}. After taking Fourier transform in $x$, we have
		$$\partial_t\hat{G}(\xi,t)+B\hat{G}(\xi,t)=0,\quad \hat{G}(\xi,0)=\mathbb{I}_8,
		$$
		where $B=\mathscr{F}[\mathcal{L}](\xi)$ satisifes
		$$
		B=\left(\begin{array}{cccc}
			0     & ic\xi^{t} & 0 & 0  \\
			ic\xi & \nu|\xi|^2\mathbb{I}_3+(\nu+\eta)\xi\xi^t   & i\sigma\xi & 0 \\
			0 &  i\sigma\xi^{t} & 0 & ib\xi^t\\
			0&0&ib\xi &\frac{1}{\tau} \mathbb{I}_3
		\end{array}\right).
		$$
		The eigenvalues of $B$ consist of the double eigenvalues $\lambda_1=-\nu |\xi|^2$ and $\lambda_2=-\frac{1}{\tau},$ and the other four eigenvalues recorded as $\lambda_k, (k=3,...,6)$ satisfying the following algebraic equation
		\begin{align*}
			&\tau\lambda^4+(1+\tau(2\nu+\eta)|\xi|^2)\lambda^3+(\tau(c^2+b^2+\sigma^2)+2\nu+\eta)|\xi|^2\lambda^2\\
			&+(c^2+\sigma^2+\tau b^2(2\nu+\eta)|\xi|^2)|\xi|^2\lambda+\tau c^2b^2|\xi|^4=0.	
		\end{align*}
		After a tedious but straightforward calculation, we obtain the explicit formula of the Fourier transform of the Green function
		$$
		\hat{G}(\xi,t)=\left(\begin{array}{cccc}
			\hat{G}_{11} (\xi,t)   &  \hat{G}_{12}(\xi,t) & \hat{G}_{13}(\xi,t) & \hat{G}_{14}(\xi,t)  \\
			\hat{G}_{21}(\xi,t)   &  \hat{G}_{22}(\xi,t) & \hat{G}_{23}(\xi,t) & \hat{G}_{24}(\xi,t)  \\
			\hat{G}_{31}(\xi,t)   &  \hat{G}_{32}(\xi,t) & \hat{G}_{33}(\xi,t) & \hat{G}_{34}(\xi,t)  \\
			\hat{G}_{41}(\xi,t)   &  \hat{G}_{42}(\xi,t) & \hat{G}_{43}(\xi,t) & \hat{G}_{44}(\xi,t)  \\
		\end{array}\right),
		$$
		where $\hat{G}_{ij}(\xi,t)$ are written explicitly as
		\begin{align*}
			&\hat{G}_{11}(\xi,t)=\sum_{k=3}^6\frac{-c^2|\xi|^2g_ke^{\lambda_kt}}{\prod_{j\neq k}\lambda_k(\lambda_k-\lambda_j)},\\
			&\hat{G}_{12}(\xi,t)=\hat{G}_{21}(\xi,t)=\sum_{k=3}^6\frac{-cg_ke^{\lambda_kt} i\xi}{\prod_{j\neq k}(\lambda_k-\lambda_j)}, \\
			&  \hat{G}_{13}(\xi,t)=\hat{G}_{31}(\xi,t)=\sum_{k=3}^6\frac{-c\sigma |\xi|^2e^{\lambda_kt}(\lambda_k+\frac{1}{\tau})}{\prod_{j\neq k}(\lambda_k-\lambda_j)},\\ &\hat{G}_{14}(\xi,t)=\hat{G}_{41}(\xi,t)=\sum_{k=3}^6\frac{c\sigma b|\xi|^2e^{\lambda_kt} i\xi}{\prod_{j\neq k}(\lambda_k-\lambda_j)},\\
			& \hat{G}_{22}(\xi,t)=\sum_{k=3}^6\frac{g_k\lambda_ke^{\lambda_kt}\xi\xi^t}{\prod_{j\neq k}|\xi|^2(\lambda_k-\lambda_j)}+\Big(\mathbb{I}_3-\frac{\xi\xi^t}{|\xi|^2}\Big)e^{\lambda_1t},\\ &\hat{G}_{23}(\xi,t)=\hat{G}_{32}(\xi,t)= \sum_{k=3}^6\frac{-\sigma\lambda_ke^{\lambda_kt}(\lambda_k+\frac{1}{\tau})i\xi}{\prod_{j\neq k}(\lambda_k-\lambda_j)},\\
			& \hat{G}_{24}(\xi,t)= \hat{G}_{42}(\xi,t)=\sum_{k=3}^6\frac{-\sigma b\lambda_ke^{\lambda_kt} \xi\xi^t }{\prod_{j\neq k}(\lambda_k-\lambda_j)},\\
			&\hat{G}_{33}(\xi,t)=\sum_{k=3}^6\frac{e^{\lambda_kt}(\lambda_k+\frac{1}{\tau})h_k }{\prod_{j\neq k}(\lambda_k-\lambda_j)},\\
			&\hat{G}_{34}(\xi,t)=\hat{G}_{43}(\xi,t)=\sum_{k=3}^6\frac{-bh_ke^{\lambda_kt}i\xi}{\prod_{j\neq k}(\lambda_k-\lambda_j)},\\ &\hat{G}_{44}(\xi,t)=\sum_{k=3}^6\frac{\lambda_k(h_k+\sigma^2|\xi|^2)e^{\lambda_kt}\xi\xi^t}{\prod_{j\neq k}|\xi|^2(\lambda_k-\lambda_j)}+\Big(\mathbb{I}_3-\frac{\xi\xi^t}{|\xi|^2}\Big)e^{\lambda_2t},
		\end{align*}
		and the functions $g_k$ and $h_k$ satisfy
		$$
		g_k=\lambda_k^2+\frac{1}{\tau}\lambda_k+b^2|\xi|^2,\quad h_k=\lambda_k^2+(2\nu+\eta)|\xi|^2\lambda_k+c^2|\xi|^2, \quad k=3,...,6.
		$$
		The asymptotic behaviors of the eigenvalues are established in the following lemma, which can be proved by applying the implicit function theorem. Here we omit the proof for brevity.
		\begin{lem}\label{lem-lxi}
			Assume that the positive constant $r_0$ is sufficiently small and $R_0$ is sufficiently large.
			\begin{itemize}
				\item For low-frequency part $|\xi|\leq r_0$, the eigenvalues $\lambda_{k}\,(k=3,...,6)$ satisfy
				\begin{equation}\label{lem-lxi1}
					\left\{
					\begin{aligned}
						&\lambda_3=-\frac{1}{\tau}+\tau b^2|\xi|^2+O(|\xi|^4),\\
						&\lambda_4=-\frac{\tau b^2\sigma^2}{2(\sigma^2+c^2)}|\xi|^2-\frac{1}{2}(2\nu+\eta)|\xi|^2+i\sqrt{\sigma^2+c^2}|\xi|+O(|\xi|^3),\\
						&\lambda_5=-\frac{\tau b^2\sigma^2}{2(\sigma^2+c^2)}|\xi|^2-\frac{1}{2}(2\nu+\eta)|\xi|^2-i\sqrt{\sigma^2+c^2}|\xi|+O(|\xi|^3),\\
						&\lambda_6=-\frac{\tau b^2 c^2}{\sigma^2+c^2}|\xi|^2+O(|\xi|^4).
					\end{aligned}
					\right.
				\end{equation}
				In addition, there exists a positive constant $\beta$ such that
				\begin{equation}\label{lem-lxi2}
					{\rm{Re}} \lambda_k\leq -\beta|\xi|^2, \quad k=3,...,6.
				\end{equation}
				\item For high-frequency part $|\xi|\geq R_0$, the eigenvalues satisfy
				\begin{equation}\label{lem-hxi1}
					\left\{
					\begin{aligned}
						&\lambda_3=-\frac{c^2}{2\nu+\eta}+O(|\xi|^{-2}),\\
						&\lambda_4=-\frac{\sigma^2}{2(2\nu+\eta)}-\frac{1}{2\tau}+ib|\xi|+O(|\xi|^{-1}),\\
						&\lambda_5=-\frac{\sigma^2}{2(2\nu+\eta)}-\frac{1}{2\tau}-ib|\xi|+O(|\xi|^{-1}),\\
						&\lambda_6=-(2\nu+\eta)|\xi|^2+\frac{c^2+\sigma^2}{2\nu+\eta}+O(|\xi|^{-1}).
					\end{aligned}
					\right.
				\end{equation}
				In addition, there exists a positive constant $R_1$ such that
				\begin{equation}\label{lem-hxi2}
					{\rm{Re}}\lambda_k\leq -R_1, \quad k=3,...,6.
				\end{equation}
				\item For medium-frequency part $r_0\leq|\xi|\leq R_0$, the eigenvalues satisfy
				\begin{equation}\label{lem-mxi1}
					{\rm{Re}}\lambda_k\leq -R_2,\quad  k=3,...,6,
				\end{equation}
				where $R_2$ is a positive constant.
			\end{itemize}
		\end{lem}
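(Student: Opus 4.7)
The plan is to analyze the quartic characteristic equation
\begin{equation*}
P(\lambda,|\xi|) \triangleq \tau\lambda^4+(1+\tau(2\nu+\eta)|\xi|^2)\lambda^3+\bigl(\tau(c^2+b^2+\sigma^2)+2\nu+\eta\bigr)|\xi|^2\lambda^2+\bigl(c^2+\sigma^2+\tau b^2(2\nu+\eta)|\xi|^2\bigr)|\xi|^2\lambda+\tau c^2b^2|\xi|^4=0
\end{equation*}
separately in the three frequency regimes, relying on continuity of the roots in $|\xi|$ plus perturbation expansions at the degenerate limits $|\xi|=0$ and $|\xi|=\infty$.

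For the low-frequency regime I would set $|\xi|=0$ to obtain $P(\lambda,0)=\lambda^3(\tau\lambda+1)$, which has the simple root $-1/\tau$ and a triple root at $0$. Since $\partial_\lambda P(-1/\tau,0)\neq 0$, the implicit function theorem produces a smooth branch $\lambda_3(|\xi|)=-1/\tau+O(|\xi|^2)$; substituting this ansatz into $P$ and matching powers of $|\xi|^2$ gives the coefficient $\tau b^2$ stated in \eqref{lem-lxi1}. The triple root at $0$ cannot be resolved directly by IFT, so I would introduce the Puiseux-type ansatz $\lambda=\mu|\xi|^\alpha$ and identify the Newton-polygon slopes from the coefficients of $P$: balancing $\lambda^3$ with $(c^2+\sigma^2)|\xi|^2\lambda$ forces $\alpha=1$ and $\mu^2=-(c^2+\sigma^2)$, producing the conjugate pair $\lambda_{4,5}$; balancing $(c^2+\sigma^2)|\xi|^2\lambda$ with $\tau c^2b^2|\xi|^4$ forces $\alpha=2$ and $\mu=-\tau b^2 c^2/(c^2+\sigma^2)$, producing $\lambda_6$. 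After fixing the leading terms, higher-order coefficients follow by inserting formal expansions $\lambda=\mu_0|\xi|^\alpha+\mu_1|\xi|^{\alpha+1}+\cdots$ into $P$ and solving triangularly; for $\lambda_{4,5}$ the $O(|\xi|^2)$ real correction comes from the viscous term $(2\nu+\eta)|\xi|^2\lambda^2$ and the Cattaneo term $\tau b^2|\xi|^2$, yielding the stated dissipative coefficient. The uniform bound \eqref{lem-lxi2} is then immediate by taking $r_0$ small enough so that the remainder in each expansion is dominated by the leading real part $-\beta|\xi|^2$.

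For the high-frequency regime I would perform the analogous expansion at $|\xi|=\infty$ by setting $\epsilon=|\xi|^{-1}$ and rewriting $P$ after dividing through by an appropriate power of $|\xi|$; again the Newton polygon in $\epsilon$ reveals three scalings for the four roots: $\lambda=O(1)$, $\lambda=\mu|\xi|$, and $\lambda=\mu|\xi|^2$. The $O(1)$ branch comes from balancing $\tau b^2(2\nu+\eta)|\xi|^4\lambda$ with $\tau c^2 b^2|\xi|^4$, giving $\lambda_3\to -c^2/(2\nu+\eta)$; the $O(|\xi|)$ branches from balancing $\tau(2\nu+\eta)|\xi|^2\lambda^3$ with $\tau b^2(2\nu+\eta)|\xi|^4\lambda$, giving $\mu=\pm ib$ with the real $O(1)$ correction from the remaining lower-order terms; the $O(|\xi|^2)$ branch from balancing $\tau\lambda^4$ with $(2\nu+\eta)\tau|\xi|^2\lambda^3$, yielding $\lambda_6\sim -(2\nu+\eta)|\xi|^2$. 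Applying IFT in the variable $\epsilon$ at each simple limit root furnishes the convergent expansions \eqref{lem-hxi1}; choosing $R_0$ large gives \eqref{lem-hxi2}.

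The medium-frequency estimate \eqref{lem-mxi1} I would obtain by continuity and compactness. All coefficients of $P(\lambda,|\xi|)$ are strictly positive for $|\xi|>0$, so one can either verify the Routh--Hurwitz condition $a_1a_2a_3>a_0a_3^2+a_1^2a_4$ explicitly to exclude purely imaginary roots, or simply observe that the root map $|\xi|\mapsto\{\lambda_k(|\xi|)\}$ is continuous on the compact interval $[r_0,R_0]$ and, by the already-established low- and high-frequency estimates together with the absence of roots on the imaginary axis, stays in the strict left half-plane; its maximum real part is then attained and gives the constant $-R_2<0$. The main obstacle in this program is the low-frequency analysis at the triple root $0$: the standard implicit function theorem does not apply, so Puiseux expansions (equivalently, the Newton polygon) must be used and the nondegeneracy of the leading coefficient $\mu_0^2=-(c^2+\sigma^2)$ of $\lambda_{4,5}$ must be exploited to bootstrap all higher corrections, paying particular attention to producing the correct $O(|\xi|^2)$ real part that controls the decay of the semigroup in Section \ref{Sec5}.
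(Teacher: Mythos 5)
Your proposal is correct and takes essentially the approach the paper indicates: the paper omits the proof, stating only that the lemma "can be proved by applying the implicit function theorem," and your combination of the implicit function theorem at the simple root $-1/\tau$, Newton-polygon/Puiseux rescalings at the triple root $\lambda=0$ (and at $|\xi|=\infty$), and continuity plus compactness with a Routh--Hurwitz check on the medium-frequency annulus is the standard way to carry this out. I checked that your leading balances reproduce the stated coefficients (e.g.\ the correction $\tau b^2|\xi|^2$ for $\lambda_3$, the real part $-\frac{1}{2}(2\nu+\eta)-\frac{\tau b^2\sigma^2}{2(c^2+\sigma^2)}$ for $\lambda_{4,5}$ at low frequency, and $-\frac{\sigma^2}{2(2\nu+\eta)}-\frac{1}{2\tau}$ at high frequency), so the sketch is sound.
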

		Now we are in a position to establish time-decay estimates of the solution $(n,w,\phi,\psi)$ to the problem \eqref{linear-nsac}-\eqref{TT1} for some special initial data.
		\begin{prop}\label{pro-li-d}
			Assume that the conditions in Theorem \ref{thm3} hold. Then, the global solution $(n,w,\phi,\psi)$ of the linearized problem \eqref{linear-nsac}-\eqref{TT1} satisfies, for large time $t$, that
			\begin{equation}\label{pro-li-d1}
				\begin{aligned}
					&\bar{c}_1(1+t)^{-\frac{3}{4}}\leq \|(n,w,\phi)(t)\|_{L^2}\leq  C(1+t)^{-\frac{3}{4}},
					&\bar{c}_1(1+t)^{-\frac{5}{4}}\leq \|\psi(t)\|_{L^2}\leq  C(1+t)^{-\frac{5}{4}},
				\end{aligned}
			\end{equation}
			where $\bar{c}_1$ is a positive constant independent of time.
		\end{prop}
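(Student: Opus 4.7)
The strategy is to work in Fourier space, using the explicit formulas for $\hat G(\xi,t)$ together with the three-regime eigenvalue asymptotics of Lemma \ref{lem-lxi}. By Plancherel's identity, $\|U(t)\|_{L^2}^2 = \int_{\mathbb{R}^3} |\hat G(\xi,t)\hat U_0(\xi)|^2\,d\xi$, and I would split this integral into low, medium, and high frequency regions. In the medium regime $r_0\leq|\xi|\leq R_0$ and the high regime $|\xi|\geq R_0$, the estimates \eqref{lem-mxi1} and \eqref{lem-hxi2} give uniform exponential damping $e^{-R_i t}$ (with additional parabolic damping from $\lambda_6\sim -(2\nu+\eta)|\xi|^2$ at high frequency), which easily absorbs any polynomial factor in $|\xi|$ coming from the Green-function numerators and decays much faster than the target algebraic rate.

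The essential work is in the low-frequency regime $|\xi|\leq r_0$. The $\lambda_3\sim -1/\tau$ contribution carries the harmless factor $e^{-t/\tau}$, while $\lambda_4,\lambda_5,\lambda_6$ all satisfy $\mathrm{Re}\,\lambda_k\lesssim -|\xi|^2$ by \eqref{lem-lxi2}. Expanding the explicit formulas for $\hat G_{ij}$ to leading order as $|\xi|\to 0$, I would verify that every entry in the $(n,w,\phi)$-block is bounded by $Ce^{-\beta|\xi|^2 t}$, whereas each entry coupling $\psi$ with $(n,w,\phi)$ carries an additional factor of $|\xi|$ in its numerator after cancelling with the denominator $\prod_{j\neq k}(\lambda_k-\lambda_j)$. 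Combined with $\|\hat U_0\|_{L^\infty}\lesssim\|U_0\|_{L^1}$ and the standard bound $\int_{|\xi|\leq r_0}|\xi|^{2m}e^{-2\beta|\xi|^2 t}\,d\xi\lesssim (1+t)^{-3/2-m}$, this produces the upper bounds $\|(n,w,\phi)(t)\|_{L^2}\lesssim (1+t)^{-3/4}$ and $\|\psi(t)\|_{L^2}\lesssim (1+t)^{-5/4}$.

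For the lower bounds I would exploit \eqref{in-data-optimal}. Since $\hat w_0 = \hat\phi_0 = 0$ for $|\xi|\leq r_0$, one has $\hat n(\xi,t) = \hat G_{11}\hat n_0 + \hat G_{14}\hat\psi_0$ and $\hat\psi(\xi,t) = \hat G_{41}\hat n_0 + \hat G_{44}\hat\psi_0$ there. A careful low-frequency expansion of the formula for $\hat G_{11}$, using \eqref{lem-lxi1}, yields
\begin{equation*}
\hat G_{11}(\xi,t) = A_1\, e^{-\alpha|\xi|^2 t}\cos(c_*|\xi|t) + A_2\, e^{-\alpha_6|\xi|^2 t} + O(|\xi|) + O(e^{-t/\tau}),
\end{equation*}
with explicit positive constants $A_1,A_2,\alpha,\alpha_6$ and $c_*=\sqrt{c^2+\sigma^2}$, while $\hat G_{41}$ admits a parallel expansion with an additional factor $i\xi$. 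Integrating $|\hat n(\xi,t)|^2$ over the annulus $\{c_1/\sqrt{t}\leq|\xi|\leq c_2/\sqrt{t}\}\subset\{|\xi|\leq r_0\}$ (valid for $t$ large), where both exponential weights are bounded below and $|\hat n_0|\geq\mu_0$, gives $\|n\|_{L^2}^2\geq C\mu_0^2 (1+t)^{-3/2}$; the perturbation $\hat G_{14}\hat\psi_0$ contributes only $O((1+t)^{-5/2})$ in $L^2$ and is absorbed for $t$ large. The extra $|\xi|$ in $\hat G_{41}$ analogously yields $\|\psi\|_{L^2}^2\geq C\mu_0^2 (1+t)^{-5/2}$.

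The main obstacle is that the oscillatory piece $A_1\cos(c_*|\xi|t)$ can in principle interfere destructively with the purely diffusive piece $A_2 e^{-\alpha_6|\xi|^2 t}$, so a pointwise lower bound on $|\hat G_{11}|$ is unavailable. I would resolve this by observing that on the annulus $c_1/\sqrt{t}\leq|\xi|\leq c_2/\sqrt{t}$ the phase $c_*|\xi|t$ ranges over an interval of length $\sim\sqrt{t}\to\infty$; squaring the expansion and integrating, the $\cos^2$ averages to $1/2$ and contributes a definite term of order $(1+t)^{-3/2}$, the $e^{-2\alpha_6|\xi|^2 t}$ term is likewise of order $(1+t)^{-3/2}$, while the cross term $\int \cos(c_*|\xi|t)e^{-\gamma|\xi|^2 t}\,d\xi$ on the annulus is only $O(t^{-2})$ by an elementary integration-by-parts (or stationary-phase) argument in spherical coordinates. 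This leaves a strictly positive main term that closes the lower bound.
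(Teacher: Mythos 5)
Your proposal is correct and follows essentially the same route as the paper: Fourier representation via the explicit Green matrix $\hat G(\xi,t)$, a low/medium/high frequency splitting driven by Lemma \ref{lem-lxi}, the observation that the $\psi$-rows carry an extra factor of $\xi$ (or a pure $e^{-t/\tau}$ factor) which accounts for the faster rate $(1+t)^{-5/4}$, and a lower bound extracted from $\hat G_{11}\hat n_0$ alone using \eqref{in-data-optimal}, with the $\hat G_{14}\hat\psi_0$ contribution absorbed as an $O((1+t)^{-5/2})$ perturbation. The one point where you genuinely diverge is the treatment of possible destructive interference between the acoustic piece $e^{-\nu_1|\xi|^2t}\cos(\hat c|\xi|t)$ and the diffusive piece $e^{-\nu_2|\xi|^2t}$: the paper restricts the $\xi$-integration to the union of thin shells where $\cos(\hat c|\xi|t)\ge 1/2$, so that both contributions are positive and add pointwise (see \eqref{pro-li-d3}--\eqref{pro-li-d4}), whereas you integrate over a single annulus $|\xi|\sim t^{-1/2}$, expand the square, and kill the cross term by oscillation (integration by parts in the radial variable gives $O(t^{-2})$, while $\cos^2$ averages to $1/2$). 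Both devices close the argument; the paper's shell decomposition avoids any stationary-phase estimate at the cost of a slightly fussier counting over $k$, while your averaging argument is arguably cleaner and generalizes more readily to Green-function entries where the oscillatory coefficient is complex or sign-changing (e.g.\ the $\sin(\hat c|\xi|t)$ structure appearing in $\hat G_{41}$, which is exactly what is needed for the lower bound on $\|\psi\|_{L^2}$).
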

		\begin{proof}
			The solution $\hat{U}(\xi,t)=(\hat{n},\hat{w},\hat{\phi},\hat{\psi})^\top$ for the linear problem \eqref{linear-nsac}-\eqref{TT1} can be solved as
			$$
			\left\{\begin{aligned}
				&\hat{n}(\xi,t)=\hat{G}_{11}\hat{n}_0+\hat{G}_{12}\hat{w}_0+\hat{G}_{13}\hat{\phi}_0+\hat{G}_{14}\hat{\psi}_0,\\
				&\hat{w}(\xi,t)=\hat{G}_{21}\hat{n}_0+\hat{G}_{22}\hat{w}_0+\hat{G}_{23}\hat{\phi}_0+\hat{G}_{24}\hat{\psi}_0,\\
				&\hat{\phi}(\xi,t)=\hat{G}_{31}\hat{n}_0+\hat{G}_{32}\hat{w}_0+\hat{G}_{33}\hat{\phi}_0+\hat{G}_{34}\hat{\psi}_0,\\
				&\hat{\psi}(\xi,t)=\hat{G}_{41}\hat{n}_0+\hat{G}_{42}\hat{w}_0+\hat{G}_{43}\hat{\phi}_0+\hat{G}_{44}\hat{\psi}_0.
			\end{aligned}	
			\right.
			$$
			For the low-frequency part $|\xi|\leq r_0$, by a tedious calculation, we can obtain the leading term of $\hat{G}_{ij}$ as
			\begin{align*}
				&\hat{G}_{11}(\xi,t)=-c^2b^4\tau^6|\xi|^6e^{\lambda_3t}+		\frac{c^2}{2(c^2+\sigma^2)}e^{\lambda_4t}+	\frac{c^2}{2(c^2+\sigma^2)}e^{\lambda_5t}+\frac{\sigma^2}{c^2+\sigma^2}e^{\lambda_6t},\\
				&\hat{G}_{12}(\xi,t)=\hat{G}_{21}(\xi,t)=cb^4\tau^5|\xi|^4 i\xi e^{\lambda_3t}-\frac{c\xi}{2\sqrt{c^2+\sigma^2}|\xi|}e^{\lambda_4t}
				+\frac{c\xi}{2\sqrt{c^2+\sigma^2}|\xi|}e^{\lambda_5t}-\frac{cb^2\sigma^2\tau}{(c^2+\sigma^2)^2}e^{\lambda_6t}i\xi,\\
				&\hat{G}_{13}(\xi,t)=\hat{G}_{31}(\xi,t)=cb^2\sigma \tau^4|\xi|^4e^{\lambda_3t}+\frac{c\sigma}{2(c^2+\sigma^2)}e^{\lambda_4t}
				+\frac{c\sigma}{2(c^2+\sigma^2)}e^{\lambda_5t}-\frac{c\sigma}{c^2+\sigma^2}e^{\lambda_6t},\\
				&\hat{G}_{14}(\xi,t)=\hat{G}_{41}(\xi,t)=-cb\sigma\tau^3|\xi|^2i\xi e^{\lambda_3t}-\frac{cb\sigma\tau}{2(c^2+\sigma^2)}e^{\lambda_4t}i\xi
				+\frac{cb\sigma\tau}{2(c^2+\sigma^2)}e^{\lambda_5t}i\xi+\frac{cb\sigma\tau}{c^2+\sigma^2}e^{\lambda_6t}i\xi,\\
				&\hat{G}_{22}(\xi,t)=\Big( \mathbb{I}_3-\frac{\xi\xi^t}{|\xi|^2}\Big)e^{\lambda_1t}+b^4\tau^4|\xi|^2\xi\xi^te^{\lambda_3t}
				+\frac{1}{2}\frac{\xi\xi^t}{|\xi|^2}e^{\lambda_4t}
				+\frac{1}{2}\frac{\xi\xi^t}{|\xi|^2}e^{\lambda_5t}-\frac{c^2b^4\sigma^2\tau^2}{(c^2+\sigma^2)^3}\xi\xi^te^{\lambda_6t},\\
				&\hat{G}_{23}(\xi,t)=\hat{G}_{32}(\xi,t)=-b^2\sigma\tau^3|\xi|^2i\xi e^{\lambda_3t}-\frac{\sigma \xi}{2\sqrt{c^2+\sigma^2}|\xi|}e^{\lambda_4t}+\frac{\sigma \xi}{2\sqrt{c^2+\sigma^2}|\xi|}e^{\lambda_5t}+\frac{c^2b^2\sigma\tau}{(c^2+\sigma^2)^2}i\xi e^{\lambda_6t},\\
				&\hat{G}_{24}(\xi,t)=\hat{G}_{42}(\xi,t)=-b\sigma\tau^2\xi\xi^te^{\lambda_3t}
				+\frac{ib\sigma\tau}{2\sqrt{c^2+\sigma^2}|\xi|}\xi\xi^te^{\lambda_4t}-\frac{ib\sigma\tau}{2\sqrt{c^2+\sigma^2}|\xi|}\xi\xi^t e^{\lambda_5t}
				+\frac{c^2b^3\sigma\tau^2}{(c^2+\sigma^2)^2}\xi\xi^te^{\lambda_6t},\\
				&\hat{G}_{33}(\xi,t)=-b^2\tau^2|\xi|^2e^{\lambda_3t}+\frac{\sigma^2}{2(c^2+\sigma^2)}e^{\lambda_4t}+\frac{\sigma^2}{2(c^2+\sigma^2)}e^{\lambda_5t}
				+\frac{c^2}{c^2+\sigma^2}e^{\lambda_6t},\\
				&\hat{G}_{34}(\xi,t)=\hat{G}_{43}(\xi,t)= b\tau i\xi e^{\lambda_3t}-\frac{b\sigma^2\tau}{2(c^2+\sigma^2)}i\xi e^{\lambda_4t}
				-\frac{b\sigma^2\tau}{2(c^2+\sigma^2)}i\xi e^{\lambda_5t}-\frac{c^2b\tau}{c^2+\sigma^2}i\xi e^{\lambda_6t},\\
				&\hat{G}_{44}(\xi,t)=\Big( \mathbb{I}_3-\frac{\xi\xi^t}{|\xi|^2}\Big)e^{\lambda_2t}+\frac{\xi\xi^t}{|\xi|^2}e^{\lambda_3t}-\frac{b^2\sigma^2\tau^2}{2(c^2+\sigma^2)}\xi\xi^t e^{\lambda_4t}-\frac{b^2\sigma^2\tau^2}{2(c^2+\sigma^2)}\xi\xi^t e^{\lambda_5t}-\frac{c^2b^2\tau^2}{c^2+\sigma^2}\xi\xi^t e^{\lambda_6 t}.
			\end{align*}
			
			For $|\xi|\geq r_0$, we follow a similar method developed in \cite{MR2164944} to prove the boundedness of $\hat{G}_{ij}(\xi,t)$, which, together with Lemma \ref{lem-lxi}, yields
			$$
			\hat{G}_{ij}(\xi,t)=\mathcal{O}(1)e^{-Rt}, ~~1\leq i,j\leq 4,
			$$
			where $R=\max\{R_1, R_2\}$.
			
			Therefore, due to Parseval's equality and Lemma \ref{lem-lxi}, it is enough to estimate the decay rate of solution $n(x,t)$ as
			$$
			\begin{aligned}
				\|n(t)\|_{L^2}
				&=\|\hat{n}(t)\|_{L^2}\leq \|\hat{G}_{11}\hat{n}_0+\hat{G}_{12}\hat{w}_0+\hat{G}_{13}\hat{\phi}_0+\hat{G}_{14}\hat{\psi}_0\|_{L^2}\\
				&\leq \big\| -c^2b^4\tau^6|\xi|^6e^{\lambda_3t}+		\frac{c^2}{2(c^2+\sigma^2)}e^{\lambda_4t}+	\frac{c^2}{2(c^2+\sigma^2)}e^{\lambda_5t}+\frac{\sigma^2}{c^2+\sigma^2}e^{\lambda_6t}\big\|_{L^2}\|n_0\|_{L^1}\\
				&~~+\big\| cb^4\tau^5|\xi|^4 i\xi e^{\lambda_3t}-\frac{c\xi}{2\sqrt{c^2+\sigma^2}|\xi|}e^{\lambda_4t}
				+\frac{c\xi}{2\sqrt{c^2+\sigma^2}|\xi|}e^{\lambda_5t}-\frac{cb^2\sigma^2\tau}{(c^2+\sigma^2)^2}e^{\lambda_6t}i\xi\big\|_{L^2}\|w_0\|_{L^1}\\
				&~~+\big\| cb^2\sigma \tau^4|\xi|^4e^{\lambda_3t}+\frac{c\sigma}{2(c^2+\sigma^2)}e^{\lambda_4t}
				+\frac{c\sigma}{2(c^2+\sigma^2)}e^{\lambda_5t}-\frac{c\sigma}{c^2+\sigma^2}e^{\lambda_6t}\big\|_{L^2}\big\|\phi_0\|_{L^1}\\
				&~~+\big\|-cb\sigma\tau^3|\xi|^2i\xi e^{\lambda_3t}-\frac{cb\sigma\tau}{2(c^2+\sigma^2)}e^{\lambda_4t}i\xi
				+\frac{cb\sigma\tau}{2(c^2+\sigma^2)}e^{\lambda_5t}i\xi+\frac{cb\sigma\tau}{c^2+\sigma^2}e^{\lambda_6t}i\xi\big\|_{L^2}\|\psi_0\|_{L^1}\\
				&~~+Ce^{-Rt}\|(n_0,w_0,\phi_0,\psi_0)\|_{L^2},\\
				&\leq C(1+t)^{-\frac{3}{4}} (\|U_0\|_{L^1}+\|U_0\|_{L^2})\\
				&\leq C(1+t)^{-\frac{3}{4}}.
			\end{aligned}
			$$
			The same process can be repeated for $w(x,t)$ and $\phi(x,t)$. Thus, we have
			$$
			\|(w,\phi)(t)\|_{L^2}\leq C(1+t)^{-\frac{3}{4}}.	
			$$
			For $\psi(x,t)$, it holds
			\begin{align*}
				\|\psi(t)\|_{L^2}
				&=\|\hat{\psi}(t)\|_{L^2}\leq\|\hat{G}_{41}\hat{n}_0+\hat{G}_{42}\hat{w}_0+\hat{G}_{43}\hat{\phi}_0+\hat{G}_{44}\hat{\psi}_0\|_{L^2}\\	
				& \leq \| -cb\sigma\tau^3|\xi|^2i\xi e^{\lambda_3t}-\frac{cb\sigma\tau}{2(c^2+\sigma^2)}e^{\lambda_4t}i\xi
				+\frac{cb\sigma\tau}{2(c^2+\sigma^2)}e^{\lambda_5t}i\xi+\frac{cb\sigma\tau}{c^2+\sigma^2}e^{\lambda_6t}i\xi\|_{L^2}\|n_0\|_{L^1}\\
				&~~+\|-b\sigma\tau^2\xi\xi^te^{\lambda_3t}
				+\frac{ib\sigma\tau}{2\sqrt{c^2+\sigma^2}|\xi|}\xi\xi^te^{\lambda_4t}-\frac{ib\sigma\tau}{2\sqrt{c^2+\sigma^2}|\xi|}\xi\xi^t e^{\lambda_5t}
				+\frac{c^2b^3\sigma\tau^2}{(c^2+\sigma^2)^2}\xi\xi^te^{\lambda_6t}\|_{L^2}\|w_0\|_{L^1}	\\
				&~~+\|b\tau i\xi e^{\lambda_3t}-\frac{b\sigma^2\tau}{2(c^2+\sigma^2)}i\xi e^{\lambda_4t}
				-\frac{b\sigma^2\tau}{2(c^2+\sigma^2)}i\xi e^{\lambda_5t}-\frac{c^2b\tau}{c^2+\sigma^2}i\xi e^{\lambda_6t}\|_{L^2}\|\phi_0\|_{L^1}\\
				&~~+\|\Big( \mathbb{I}_3-\frac{\xi\xi^t}{|\xi|^2}\Big)e^{\lambda_2t}+\frac{\xi\xi^t}{|\xi|^2}e^{\lambda_3t}-\frac{b^2\sigma^2\tau^2}{2(c^2+\sigma^2)}\xi\xi^t e^{\lambda_4t}-\frac{b^2\sigma^2\tau^2}{2(c^2+\sigma^2)}\xi\xi^t e^{\lambda_5t}-\frac{c^2b^2\tau^2}{c^2+\sigma^2}\xi\xi^t e^{\lambda_6 t}\|_{L^2}\|\psi_0\|_{L^1}\\
				&~~+Ce^{-Rt} \|(n_0,w_0,\phi_0,\psi_0)\|_{L^2}\\
				&\leq C(1+t)^{-\frac{5}{4}}(\|U_0\|_{L^1}+\|U_0\|_{L^2})\\
				&\leq C(1+t)^{-\frac{5}{4}}.
			\end{align*}
			Now we are going to establish the lower bound of the time-decay rates. For $|\xi|\leq r_0$, since $\hat{w}_0(\xi)=\hat{\phi}_0(\xi)=0$, we write $\hat{n}(\xi,t)$ as follows
			\begin{align}
				\hat{n}(\xi,t)
				&=\left(-c^2b^4\tau^6|\xi|^6e^{\lambda_3t}+\frac{c^2}{2(c^2+\sigma^2)}(e^{\lambda_4t}+e^{\lambda_5t})+\frac{\sigma^2}{c^2+\sigma^2}e^{\lambda_6t}\right)\hat{n}_0(\xi)\notag\\
				&\quad+\left(-cb\sigma\tau^3|\xi|^2e^{\lambda_3t}+\frac{cb\sigma\tau}{2(c^2+\sigma^2)} (e^{\lambda_5t}-e^{\lambda_4t})+\frac{cb\sigma\tau}{c^2+\sigma^2}e^{\lambda_6t} \right)i\xi\cdot\hat{\psi}_0 (\xi)\notag\\
				&\triangleq K_1+K_2.\label{pro-li-d33}
			\end{align}
			To deal with the term $K_1$, we rewrite it as
			\begin{equation}\label{2-03-2}
				K_1= -c^2b^4\tau^6|\xi|^6e^{\lambda_3t}\hat{n}_0(\xi)+\left(\frac{c^2}{2(c^2+\sigma^2)}(e^{\lambda_4t}+e^{\lambda_5t})+\frac{\sigma^2}{c^2+\sigma^2}e^{\lambda_6t}\right)\hat{n}_0(\xi)
				\triangleq K_{11}+K_{12}.
			\end{equation}
			First, a direct calculation yields
			\begin{equation}\label{2-03-1}
				\int_{|\xi|\leq r_0}|K_{11}|^2\mathrm{d}\xi \leq \int_{|\xi|\leq r_0}\big|c^2b^4\tau^6|\xi|^6e^{\lambda_3t}\hat{n}_0(\xi)\big|^2\mathrm{d}\xi \leq  C \|n_0\|_{L^1}^2\int_{\mathbb{R}^3}\big||\xi|^6e^{\lambda_3t}\big|^2\mathrm{d}\xi\leq Ce^{-\frac{1}{2\tau}t}.
			\end{equation}
			Let $\nu_1=\frac{\tau b^2\sigma^2}{2(\sigma^2+c^2)}+\frac{1}{2}(2\nu+\eta)$, $\hat{c}=\sqrt{\sigma^2+c^2}$, $\nu_2=\frac{\tau b^2c^2}{\sigma^2+c^2}$. Then, it holds
			\begin{align}
				\int_{|\xi|\leq r_0}|K_{12}|^2\mathrm{d}\xi&\geq C\inf_{|\xi|\leq r_0}|\hat{n}_0(\xi)|^2\int_{|\xi|\leq r_0}(e^{-\nu_1|\xi|^2t}\cos (\hat{c}|\xi|t)+e^{-\nu_2|\xi|^2t})^2\mathrm{d}\xi\notag\\
				&\quad-C\int_{|\xi|\leq r_0}e^{-2\upsilon_1|\xi|^2t}\mathcal{O}(|\xi|^6t^2)\mathrm{d}\xi
				-C\int_{|\xi|\leq r_0}e^{-2\nu_2|\xi|^2t}\mathcal{O}(|\xi|^8t^2)\mathrm{d}\xi\notag\\
				&\geq  C\inf_{|\xi|\leq r_0}|\hat{n}_0(\xi)|^2\int_{|\xi|\leq r_0}e^{-(\nu_1+\nu_2)|\xi|^2t}(1+\cos (\hat{c}|\xi|t))^2\mathrm{d}\xi\notag\\
				&\quad-C\int_{|\xi|\leq r_0}(e^{-2\nu_1|\xi|^2t}\mathcal{O}(|\xi|^6t^2)+e^{-2\nu_2|\xi|^2t}\mathcal{O}(|\xi|^8t^2))\mathrm{d}\xi\notag\\
				&\geq C\mu_0^2\int_{|\xi|\leq r_0}e^{-(\nu_1+\nu_2)|\xi|^2t}(1+\cos(\hat{c}|\xi|t))^2\mathrm{d}\xi\notag\\
				&\quad-C\int_{|\xi|\leq r_0}(e^{-2\nu_1|\xi|^2t}\mathcal{O}(|\xi|^6t^2)+e^{-2\nu_2|\xi|^2t}\mathcal{O}(|\xi|^8t^2))\mathrm{d}\xi\notag\\
				&\triangleq Z_1+Z_2.\label{pro-li-d3}
			\end{align}
			Let $m=\sqrt{t}|\xi|$. For large time $t\geq t_0$, we obtain
			\begin{align}
				Z_1&=C\mu_0^2t^{-\frac{3}{2}}\int_0^{r_0\sqrt{t}}
				e^{-(\nu_1+\nu_2)m^2}(1+\text{cos}(\hat{c}m\sqrt{t}))^2m^2\mathrm{d}m\notag\\
				&\geq C\mu_0^2(1+t)^{-\frac{3}{2}}\sum_{k=0}^{[\hat{c}r_0t/\pi]-1}
				\int_{2k\pi/(\hat{c}\sqrt{t})}^{(2k\pi+\frac{\pi}{3})/(\hat{c}\sqrt{t})}e^{-(\nu_1+\nu_2)m^2}
				(1+\text{cos}(\hat{c}m\sqrt{t}))^2m^2\mathrm{d}m\notag\\
				&\geq\frac{9}{4}C\mu_0^2(1+t)^{-\frac{3}{2}}\sum_{k=0}^{[\hat{c}r_0t/\pi]-1}
				\int_{2k\pi/(\hat{c}\sqrt{t})}^{(2k\pi+\frac{\pi}{3})/(\hat{c}\sqrt{t})}e^{-(\nu_1+\nu_2)m^2}m^2\mathrm{d}m\notag\\
				&\geq \tilde{g}\mu_0^2(1+t)^{-\frac{3}{2}},\label{pro-li-d4}
			\end{align}
			where $\tilde{g}$ is a positive constant. For the term $Z_2$, it is easy to verify that
			\begin{equation}\label{pro-li-d5}
				|Z_2|\leq C(1+t)^{-\frac{5}{2}}.
			\end{equation}
			Substituting \eqref{pro-li-d4} and \eqref{pro-li-d5} into \eqref{pro-li-d3} yields
			$$
			\int_{|\xi|\leq r_0}|K_{12}|^2\mathrm{d}\xi\geq \tilde{g}\mu_0^2(1+t)^{-\frac{3}{2}}-C(1+t)^{-\frac{5}{2}},
			$$
			which, together with \eqref{2-03-1} and \eqref{2-03-2}, yields
			\begin{equation}\label{pro-li-d6}
				\int_{|\xi|\leq r_0} |K_1|^2\mathrm{d}\xi\geq \tilde{g}\mu_0^2(1+t)^{-\frac{3}{2}}-C(1+t)^{-\frac{5}{2}}-Ce^{-\frac{1}{2\tau}t}.
			\end{equation}
			It is easy to show
			\begin{equation}\label{pro-li-d7}
				\int_{|\xi|\leq r_0} |K_2|^2\mathrm{d}\xi\leq C(1+t)^{-\frac{5}{2}}.
			\end{equation}
			Thus, for large-time $t\geq t_0$, combining \eqref{pro-li-d6} and \eqref{pro-li-d7}, we obtain  from \eqref{pro-li-d33} that
			\begin{align}
				\|\hat{n}(t)\|_{L^2}^2&\geq \int_{|\xi|\leq r_0} |K_1+K_2|^2\mathrm{d}\xi\notag\\
				&\geq \frac{1}{2}\int_{|\xi|\leq r_0} |K_1|^2\mathrm{d}\xi-\int_{|\xi|\leq r_0} |K_2|^2\mathrm{d}\xi\notag\\
				&\geq \frac{1}{2}\tilde{g}^2\mu_0^2(1+t)^{-\frac{3}{2}}-C(1+t)^{-\frac{5}{2}}-Ce^{-\frac{1}{2\tau}t}\notag\\
				&\geq \frac{1}{4}\tilde{g}^2\mu_0^2(1+t)^{-\frac{3}{2}}\triangleq \tilde{c}_1^2(1+t)^{-\frac{3}{2}},\label{pro-li-d8}
			\end{align}
			with a positive constant $\tilde{c}_1>0$. Therefore, Parseval's equality and \eqref{pro-li-d8} yield
			$$
			\|n(t)\|_{L^2}=\|\hat{n}(t)\|_{L^2}\geq \tilde{c}_1(1+t)^{-\frac{3}{4}}.
			$$
			Similarly, we can obtain
			\begin{align*}
				\|w(t)\|_{L^2}\geq \tilde{c}_2(1+t)^{-\frac{3}{4}},\quad\|\phi(t)\|_{L^2}\geq \tilde{c}_3(1+t)^{-\frac{3}{4}},\quad\|\psi(t)\|_{L^2}\geq \tilde{c}_4(1+t)^{-\frac{5}{4}},
			\end{align*}
			where the positive constants $\tilde{c}_2, \tilde{c}_3, \tilde{c}_4$ are independent of time.
			
			Choosing $\bar{c}_1=\min\{\tilde{c}_1, \tilde{c}_2, \tilde{c}_3,\tilde{c}_4\}$, then we  complete the proof of Proposition \ref{pro-li-d}.
		\end{proof}
		\subsection{Nonlinear time-decay estimates}\label{subsec5-non-decay}
		\hspace{2em}To investigate the  upper bound time-decay rates of classical solutions to the nonlinear system \eqref{nsac}, we first introduce the energy $\mathcal{E}_k^s(t)$ as
		\begin{equation}\label{eskt}
			\begin{aligned}
				&\mathcal{E}_0^s(t)\triangleq\|(n,w,\phi,\psi)(t)\|_{H^s}^2,\\
			&\mathcal{E}_k^s(t)\triangleq\|\nabla^k(n,w,\phi,\psi)(t)\|_{H^{s-k}}^2\\
			&\quad\quad\quad+\beta_2\sum_{\alpha=k}^s\int_{\mathbb{R}^3}\Big(\nabla^{\alpha-1}w^h\cdot\nabla^\alpha n^h+\nabla^{\alpha-1}\psi^h\cdot\nabla^\alpha \phi^h\Big)\mathrm{d}x,\quad 1\leq k\leq s,
			\end{aligned}
		\end{equation}
	with $\beta_2$ a small positive constant and the time-weighted energy functional
		\begin{equation}\label{mskt}
			\mathcal{M}(t)\triangleq\sup_{0\leq r \leq t}\left\{(1+r)^{\frac{3}{4}}\|(n,w,\phi,\psi)(r)\|_{H^{s}}\right\}.
		\end{equation}
		Then, it yields
		\begin{equation}\label{THZT1}
			\|(n,w,\phi,\psi)(t)\|_{H^s}\leq (1+t)^{-\frac{3}{4}}\mathcal{M}(t).
		\end{equation}
		The next goal is to prove the uniform time-independent bound of $\mathcal{M}(t)$.
		
		Let $U=(n,w,\phi,\psi)^\top, U_0=(n_0,w_0,\phi_0,\psi_0)^\top, F=(f_1,f_2,f_3,0)^\top$. We rewrite the nonlinear system  \eqref{nsac} into the operator form
		\begin{align*}
			\left\{
			\begin{aligned}
				&\partial_tU+\mathcal{L}U=F,\\
				&U|_{t=0}=U_0.
			\end{aligned}
			\right.
		\end{align*}
		Making use of Duhamel's principle yields
		\begin{equation}\label{nonlinear-v}
			U(x,t)=G*U_0+\int_0^tG(t-r)*F(r)\mathrm{d}r.
		\end{equation}
		To analyze the large-time behavior of the solution in frequency space, we introduce the low-high frequency decomposition. Decompose the solution into two parts:
		\begin{equation}\label{l-h-com}
			U(x,t)=U^{\ell}(x,t)+U^{h}(x,t)=(n^{\ell},w^{\ell},\phi^{\ell},\psi^\ell)^\top+(n^{h},w^{h},\phi^{h},\psi^h)^\top.
		\end{equation}
		Here $U^{\ell}(x,t)=P_1U(x,t)$ and $U^{h}(x,t)=P_{\infty}U(x,t)$ represent the low-frequency part and high-frequency part, respectively. The projection operator $P_1$ and $P_{\infty}$ are introduced in Section \ref{Sec2}. It is easy to verify that for $0\leq k<m$,
		\begin{equation}\label{2-07-01}
			\|\nabla^{m} U^\ell\|_{L^2}\leq C\|\nabla^{k} U^\ell\|_{L^2},\quad \|\nabla^{k}U^h\|_{L^2}\leq C\|\nabla^{m}U^h\|_{L^2},
		\end{equation}
		\begin{equation}\label{2-07-02}
			\|\nabla^{m} U^\ell\|_{L^2}\leq C\|\nabla^{m} U\|_{L^2},\quad\|\nabla^{m}U^h\|_{L^2}\leq C\|\nabla^{m}U\|_{L^2}.
		\end{equation}
		
		Now, we are in a position to establish the decay estimates for $\|(n,w,\phi,\psi)(t)\|_{H^s}$.
		
		\begin{lem}\label{lem-hs}
			Assume that the conditions in Theorem \ref{thm1} hold. Let $(n,w,\phi,\psi)$ be the classical solution of the problem \eqref{nsac}-\eqref{non-f}, then it holds
			\begin{equation}\label{lem-hs0}
				\|(n,w,\phi,\psi)(t)\|_{H^s}\leq C(1+t)^{-\frac{3}{4}},
			\end{equation}
			where the positive constant $C$ is independent of time.
		\end{lem}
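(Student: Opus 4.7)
The plan is to close a bootstrap for the time-weighted functional $\mathcal{M}(t)$ defined in \eqref{mskt} using Duhamel's formula \eqref{nonlinear-v} together with the Green-function analysis of Section \ref{Sec4-1} and the a priori $H^s$ bound from Proposition \ref{pro-est}. The low-high frequency decomposition \eqref{l-h-com} will be used throughout: low frequencies will provide polynomial decay via the explicit formulas for $\hat G_{ij}$ and the eigenvalue asymptotics \eqref{lem-lxi1}, while medium and high frequencies will contribute exponential decay by \eqref{lem-hxi2} and \eqref{lem-mxi1}.

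For the linear semigroup part, standard $L^1\to L^2$ Fourier-multiplier estimates combined with Parseval and Lemma \ref{lem-lxi} yield, for $0\leq k\leq s$,
\begin{align*}
\|\nabla^k(G*U_0)^\ell\|_{L^2} &\leq C(1+t)^{-\frac{3}{4}-\frac{k}{2}}(\|U_0\|_{L^1}+\|U_0\|_{L^2}),\\
\|\nabla^k(G*U_0)^h\|_{L^2} &\leq Ce^{-Rt}\|\nabla^k U_0\|_{L^2},
\end{align*}
with $R=\min\{R_1,R_2\}$, so that $\|G*U_0\|_{H^s}\lesssim (1+t)^{-3/4}\delta_0$. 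For the Duhamel integral the nonlinearity $F=(f_1,f_2,f_3,0)^\top$ in \eqref{non-f} is at least quadratic in the perturbation variables, so Lemma \ref{lem-commu}, Sobolev embedding in $\mathbb{R}^3$, \eqref{l-inf-est}, and Proposition \ref{pro-est} deliver
$$\|F(r)\|_{L^1\cap L^2}+\|\nabla^{s-1}F(r)\|_{L^2}\leq C\|(n,w,\phi,\psi)(r)\|_{H^s}^2\leq C(1+r)^{-\frac{3}{2}}\mathcal{M}(r)^2.$$

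Splitting $\int_0^t=\int_0^{t/2}+\int_{t/2}^t$, on $[0,t/2]$ I would apply the low-high decomposition to $G(t-r)$ and pair the low-frequency decay $(1+t-r)^{-3/4-k/2}$ with $\|F\|_{L^1}$ and the high-frequency exponential with $\|F\|_{L^2}$; on $[t/2,t]$ the Green function is bounded on $H^s$ and one uses $\|F\|_{H^s}$. Both pieces contribute at most $C(1+t)^{-3/4}\mathcal{M}(t)^2$ after evaluating the standard Beta-type time integral, and combining with the linear estimate yields
$$(1+t)^{\frac{3}{4}}\|U(t)\|_{H^s}\leq C\delta_0+C\mathcal{M}(t)^2,$$
so the smallness of $\delta_0$ closes the bootstrap and delivers $\mathcal{M}(t)\leq C\delta_0$, which is exactly \eqref{lem-hs0}. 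The main obstacle is propagating the top derivative $\nabla^s$ through the Duhamel integral without losing regularity, since terms such as $\nu n\Delta w/(1+n)$ and $bn\,{\rm div}\,\psi/(1+n)$ in $F$ carry $s+1$ derivatives on the solution after applying $\nabla^s$. The low-high splitting is precisely what resolves this: one derivative is pushed onto the Green function at low frequencies, where it costs only a factor $|\xi|\lesssim 1$ inside the integrable envelope $e^{-\beta|\xi|^2(t-r)}$, while at high frequencies the exponential factor absorbs any polynomial derivative cost so the full $s$ derivatives on $F$ can be estimated directly via the uniform bound of Proposition \ref{pro-est}.
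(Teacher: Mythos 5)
Your overall strategy (Duhamel plus low--high decomposition plus a bootstrap on $\mathcal{M}(t)$) is in the right family, but two steps would fail as written, and the paper's proof is structured precisely to avoid them. The main one is the derivative loss: you propose to run the Duhamel estimate up to order $\nabla^s$ and to control the high-frequency piece by putting all $s$ derivatives on $F$ and invoking Proposition \ref{pro-est}. But $\nabla^sF$ contains $\nabla^s\big(\nu n\Delta w/(1+n)\big)\sim n\nabla^{s+2}w$ as well as $\nabla^{s+1}n$ and $\nabla^{s+1}\psi$ (from $f_2$ and $f_3$), none of which are bounded pointwise in time by $\|(n,w,\phi,\psi)\|_{H^s}^2$; Proposition \ref{pro-est} controls $\nabla^{s+1}w$ only in a time-integrated sense. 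Nor does the high-frequency semigroup supply the missing derivatives: by \eqref{lem-hxi1} only the mode $\lambda_6$ is parabolic, while $\lambda_3,\lambda_4,\lambda_5$ have bounded real parts, so the factor $e^{-R_1(t-r)}$ does not absorb a growth $|\xi|^2$ uniformly in $|\xi|\geq R_0$, and your claim that ``the exponential factor absorbs any polynomial derivative cost'' is false for the hyperbolic components. This is exactly why the paper never applies Duhamel at the level of $\nabla^s$: it derives the Lyapunov inequality $\frac{\mathrm{d}}{\mathrm{d}t}\mathcal{E}_0^s+C_2\mathcal{E}_0^s\leq C\|(n^{\ell},w^{\ell},\phi^{\ell},\psi^\ell)\|_{L^2}^2$ from the energy estimates of Section \ref{Sec3}, where the top-order quasilinear terms are handled by commutators and absorbed into the dissipation, and applies Duhamel only to the low-frequency $L^2$ norm, where extra derivatives cost nothing.

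The second problem is your closure, which is quadratic: $\mathcal{M}(t)\leq C(\delta_0+A_0)+C\mathcal{M}(t)^2$, where the linear term must in fact carry $A_0=\|(n_0,w_0,\phi_0,\psi_0)\|_{L^1}$ rather than $\delta_0$, since the $L^1\to L^2$ low-frequency estimate sees the $L^1$ norm of the data. A quadratic closure of this form requires $A_0$ small, which Theorem \ref{thm1} does not assume ($L^1$ membership only). The paper avoids this by estimating $\|F(r)\|_{L^1}\lesssim \|U(r)\|_{L^2}\big(\|\nabla U(r)\|_{L^2}+\|\nabla^2w(r)\|_{L^2}\big)\leq (1+r)^{-\frac{3}{4}}\mathcal{M}(t)\big(\|\nabla U(r)\|_{L^2}+\|\nabla^2w(r)\|_{L^2}\big)$ and then applying Cauchy--Schwarz in $r$ together with the dissipation bound $\int_0^t\big(\|\nabla U\|_{L^2}^2+\|\nabla^2w\|_{L^2}^2\big)\mathrm{d}r\leq C\delta_0^2$ from Proposition \ref{pro-est}. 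This produces the term $C\delta_0\mathcal{M}(t)(1+t)^{-\frac{3}{4}}$, which is linear in $\mathcal{M}$ with a small coefficient, so the resulting inequality $\mathcal{M}\leq C(\delta_0+A_0)+C\delta_0\mathcal{M}$ closes for merely bounded $A_0$. You would need to adopt both of these devices for your argument to go through.
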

		\begin{proof}
			According to \eqref{pro-est1}, we can obtain
			\begin{align*}
				\frac{\mathrm{d}}{\mathrm{d}t}\|(n,w,\phi,\psi)(t)\|_{H^s}^2+\|\nabla n (t)\|_{H^{s-1}}^2+\|\nabla w(t)\|_{H^s}^2+\|\nabla \phi(t)\|_{H^{s-1}}^2+\|\psi(t)\|_{H^{s}}^2\leq 0.
			\end{align*}
			By the definition of $\mathcal{E}_0^s(t)$, one has
			\begin{equation*}
				\frac{\mathrm{d}}{\mathrm{d}t}\mathcal{E}^s_0(t)+\|\nabla n(t)\|^2_{H^{s-1}}+\|\nabla w (t)\|_{H^{s}}^2+\|\nabla \phi(t)\|_{H^{s-1}}^2+\|\psi(t)\|_{H^{s}}^2\leq 0,
			\end{equation*}
			which implies that there exists a positive constant $C_2$ such that
			\begin{equation}\label{lem-hs1}
				\frac{\mathrm{d}}{\mathrm{d}t}\mathcal{E}^s_0(t)+C_2\mathcal{E}^s_0(t)\leq C\|(n^{\ell},w^{\ell},\phi^{\ell},\psi^\ell)(t)\|^2_{L^2},
			\end{equation}
			where we have used the fact that $\|(n^{h},w^{h},\phi^{h},\psi^h)(t)\|_{L^2}\leq C \|\nabla(n,w,\phi,\psi)(t)\|_{L^2}$.
			
			In terms of Duhamel's principle, the low-frequency part $U^\ell=(n^{\ell},w^{\ell},\phi^{\ell},\psi^\ell)$ can be written as
			\begin{equation}\label{lem-hs2}
				U^\ell(x,t)=G^\ell*U_0+\int_0^tG^\ell(t-r)*F(r)\mathrm{d}r.
			\end{equation}
			By Parseval's equality and H$\ddot{\text{o}}$lder's inequality, one has
			\begin{align}
				\|n^{\ell}\|_{L^2}
				&\leq \|G^\ell_{11}*n_0+G^\ell_{12}*w_0+G^\ell_{13}*\phi_0+G^\ell_{14}*\psi_0\|_{L^2}\notag\\
				&\quad +\int_0^t  \|G^\ell_{11}(t-r)*f_1(r)+G^\ell_{12}(t-r)*f_2(r)+G^\ell_{13}(t-r)*f_3(r)\|_{L^2}\mathrm{d}r\notag\\
				&\leq C(1+t)^{-\frac{3}{4}}\|(n_0,w_0,\phi_0,\psi_0)\|_{L^1}+C\int_0^t(1+t-r)^{-\frac{3}{4}}\|F(r)\|_{L^1}\mathrm{d}r\notag\\
				&\leq CA_0(1+t)^{-\frac{3}{4}}+C\mathcal{M}(t)\int_0^t(1+t-r)^{-\frac{3}{4}}(1+r)^{-\frac{3}{4}}\notag\\
				&\quad \times(\|\nabla(n,w,\phi,\psi)(r)\|_{L^2}+\|\nabla^2w(r)\|_{L^2})\mathrm{d}r\notag\\
				&\leq CA_0(1+t)^{-\frac{3}{4}}+C\mathcal{M}(t)\Big(\int_0^t(1+t-r)^{-\frac{3}{2}}(1+r)^{-\frac{3}{2}}\mathrm{d}r\Big)^{\frac{1}{2}}\notag\\
				&\quad \times\Big(\int_0^t(\|\nabla(n,w,\phi,\psi)(r)\|_{L^2}^2+\|\nabla^2w(r)\|_{L^2}^2)\mathrm{d}r\Big)^{\frac{1}{2}}\notag\\
				&\leq C(1+t)^{-\frac{3}{4}}\big(A_0+\delta_0\mathcal{M}(t)\big),\label{lem-hs31}
			\end{align}
			where $A_0\triangleq\|(n_0,w_0,\phi_0,\psi_0)\|_{L^1}$. Similarly, we can prove
			\begin{equation}\label{lem-hs32}
				\|(w^\ell,\phi^\ell)\|_{L^2}\leq C(1+t)^{-\frac{3}{4}}\big(A_0+\delta_0\mathcal{M}(t)\big),\quad
				\|\psi^\ell\|_{L^2}\leq C(1+t)^{-\frac{5}{4}}\big(A_0+\delta_0\mathcal{M}(t)\big).
			\end{equation}	
			Substituting \eqref{lem-hs31} and \eqref{lem-hs32} into \eqref{lem-hs1} yields
			\begin{equation*}
				\frac{\mathrm{d}}{\mathrm{d}t}\mathcal{E}^s_0(t)+C_2\mathcal{E}^s_0(t)\leq C(1+t)^{-\frac{3}{2}}\left(A_0+\delta_0\mathcal{M}(t)\right)^2.
			\end{equation*}
			With the help of Gr\"{o}nwall's inequality, it is easy to show
			\begin{equation*}
				\begin{aligned}
					\mathcal{E}^s_0(t)&\leq e^{-C_2t}\mathcal{E}^s_0(0)+ C\int_0^te^{-C_2(t-r)}(1+r)^{-\frac{3}{2}}\left(A_0+\delta_0\mathcal{M}(t)\right)^2\mathrm{d}r\\
					&\leq C(1+t)^{-\frac{3}{2}}\left(\delta_0^2+A_0^2+\delta_0^2\mathcal{M}(t)^2\right),
				\end{aligned}
			\end{equation*}
			which implies that
			\begin{equation}\label{lem-hs4}
				(1+t)^{\frac{3}{2}}\|(n,w,\phi,\psi)(t)\|_{H^s}^2\le C\left(\delta_0^2+A_0^2+\delta_0^2\mathcal{M}(t)^2\right).
			\end{equation}
			It then follows from \eqref{lem-hs4} that
			$$
			\mathcal{M}(t)^2\leq C(\delta_0^2+A_0^2+\delta_0^2\mathcal{M}(t)^2).
			$$
			Due to the smallness of $\delta_0$ and the boundedness of $A_0$, it gives
			\begin{equation}\label{lem-hs5}
				\mathcal{M}(t)\leq C(\delta_0+A_0).
			\end{equation}
			Noting that since $(\rho_0-\rho_*,u_0,\theta_0-\theta_*,q_0)\in L^1(\mathbb{R}^3)$, we can obtain from \eqref{in-data2} that $A_0=\|(n_0,w_0,\phi_0,\psi_0)\|_{L^1}$ is bounded. As a result of \eqref{lem-hs5} and \eqref{THZT1}, it holds
			$$
			\|(n,w,\phi,\psi)(t)\|_{H^s}\leq C(\delta_0+A_0)(1+t)^{-\frac{3}{4}}\leq C(1+t)^{-\frac{3}{4}}.
			$$
			This completes the proof of Lemma \ref{lem-hs}.
		\end{proof}
		
		Next, we are going to enhance the time-decay rate of the derivatives $\nabla^k(n,w,\phi,\psi)$ in $L^2$ norm for $ 1\leq k\leq s$.
		\begin{lem}\label{lem-hk}
			Assume that the conditions in Theorem \ref{thm1} hold. Let $(n,w,\phi,\psi)$ be the classical solution of the problem \eqref{nsac}-\eqref{non-f}. Then, it holds for $1\leq k\leq s-1$ that
			\begin{equation}\label{lem-hk0}
				\|\nabla^k(n,w,\phi)(t)\|_{L^2}\leq C(1+t)^{-\frac{3}{4}-\frac{k}{2}},\quad
				\|\nabla^k\psi(t)\|_{L^2}\leq C(1+t)^{-\frac{5}{4}-\frac{k}{2}},
			\end{equation}
			and
			\begin{equation}\label{lem-hk1}
				\|\nabla^s(n,w,\phi,\psi)(t)\|_{L^2}\leq C(1+t)^{-\frac{3}{4}-\frac{s}{2}},
			\end{equation}
			where the positive constant $C$ is independent of time.
		\end{lem}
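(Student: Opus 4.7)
The plan is to first establish the basic rate $\|\nabla^k U(t)\|_{L^2}\lesssim(1+t)^{-3/4-k/2}$ uniformly in $1\le k\le s$ for $U=(n,w,\phi,\psi)$, which directly yields \eqref{lem-hk1} and the $(n,w,\phi)$ half of \eqref{lem-hk0}; and then to use the ODE structure of Cattaneo's law in $\eqref{nsac}_4$ to promote the $\psi$-rate to $(1+t)^{-5/4-k/2}$ for $k\le s-1$.

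For the basic rate, I would reproduce the computation in the proof of Proposition \ref{pro-est} but with each summation in Lemmas \ref{lem-n}--\ref{lem-1nx} restricted to $\alpha=k,\ldots,s$. This produces a Lyapunov-type functional $\tilde{\mathcal E}_k^s(t)$ equivalent to $\mathcal E_k^s(t)=\|\nabla^k U(t)\|_{H^{s-k}}^2$ satisfying
\begin{equation*}
\frac{d}{dt}\tilde{\mathcal E}_k^s(t)+C\big(\|\nabla^{k+1}(n,w,\phi)\|_{H^{s-k-1}}^2+\|\nabla^k\psi\|_{H^{s-k}}^2\big)\le 0.
\end{equation*}
Combining this with the low-high decomposition \eqref{l-h-com} and the inequalities $\|\nabla^k f\|_{L^2}^2\le\|\nabla^k f^\ell\|_{L^2}^2+C\|\nabla^{k+1}f\|_{L^2}^2$ upgrades the dissipation into the exponentially damped ODE
\begin{equation*}
\frac{d}{dt}\tilde{\mathcal E}_k^s(t)+C\tilde{\mathcal E}_k^s(t)\le C\|\nabla^k U^\ell(t)\|_{L^2}^2,
\end{equation*}
so everything reduces to controlling the low-frequency source.

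The low-frequency source I would estimate by Duhamel's formula $U^\ell(t)=G^\ell*U_0+\int_0^t G^\ell(t-r)*F(r)\,dr$ together with the sharp Green-function decay $\|\nabla^m G^\ell(t)\|_{L^2}\lesssim(1+t)^{-3/4-m/2}$ established in Section \ref{Sec4-1}. Distributing derivatives between kernel and nonlinearity gives
\begin{equation*}
\|\nabla^k U^\ell(t)\|_{L^2}\lesssim(1+t)^{-3/4-k/2}\|U_0\|_{L^1\cap L^2}+\sum_{j=0}^k\int_0^t(1+t-r)^{-3/4-(k-j)/2}\|\nabla^j F(r)\|_{L^1}\,dr.
\end{equation*}
Because $F$ is quadratic in $U$ with one derivative, $\|\nabla^j F\|_{L^1}$ is a sum of products $\|\nabla^{i_1}U\|_{L^2}\|\nabla^{i_2}U\|_{L^2}$ with $i_1+i_2=j+1$; bounding these via Lemma \ref{lem-hs} and, inductively on $k$, by the already-established lower-order decay rates, then choosing $j$ to optimize the resulting convolution and splitting $[0,t]$ at $t/2$, one obtains $\|\nabla^k U^\ell(t)\|_{L^2}\lesssim(1+t)^{-3/4-k/2}$. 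Gr\"onwall applied to the ODE above then gives $\mathcal E_k^s(t)\lesssim(1+t)^{-3/2-k}$ for $1\le k\le s$, which is the basic rate.

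Finally, to upgrade the $\psi$-rate when $k\le s-1$, I would treat $\eqref{nsac}_4$ as a linear scalar ODE in $t$ and solve explicitly:
\begin{equation*}
\psi(t)=e^{-t/\tau}\psi_0-b\int_0^t e^{-(t-r)/\tau}\nabla\phi(r)\,dr.
\end{equation*}
Differentiating $k$ times in $x$ and inserting the basic rate $\|\nabla^{k+1}\phi(r)\|_{L^2}\lesssim(1+r)^{-5/4-k/2}$ (available since $k+1\le s$), a splitting of the time integral at $r=t/2$ produces $\|\nabla^k\psi(t)\|_{L^2}\lesssim(1+t)^{-5/4-k/2}$, which is the enhanced bound. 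The main technical obstacle will be the nonlinear Duhamel convolution at high $k$: the naive bound $\|\nabla^k G^\ell\|_{L^2}\|F\|_{L^1}$ saturates at rate $(1+t)^{-2}$ once $k\ge 3$ (since $\|U\|_{L^2}$ alone decays only as $(1+t)^{-3/4}$), so the derivative split between $G^\ell$ and $F$ must be chosen carefully and the bootstrap fed by the inductively-improved decay of lower-order derivatives in order to close the argument.
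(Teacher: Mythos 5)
Your overall strategy is the same as the paper's proof of Lemma \ref{lem-hk}: derive a damped inequality $\frac{\mathrm{d}}{\mathrm{d}t}\mathcal{E}_k^s(t)+C\mathcal{E}_k^s(t)\leq C\|\nabla^k(n^\ell,w^\ell,\phi^\ell,\psi^\ell)(t)\|_{L^2}^2$ (this is \eqref{lem-hk-e}), estimate the low-frequency source by Duhamel's formula with the time integral split at $t/2$ and the derivative count on the kernel versus the nonlinearity chosen per region, close by induction on $k$, and finally upgrade $\psi$ through the explicit solution of $\eqref{nsac}_4$. Your last two steps coincide with the paper's (the paper takes $j=0$ on $[0,t/2]$ and $j=k-2$ on $[t/2,t]$, which is exactly your ``optimized'' choice, fed by the inductively improved rates exactly as you anticipate).

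The gap is in your first display. Restricting the sums in Lemmas \ref{lem-n}--\ref{lem-1nx} to $\alpha=k,\dots,s$ does \emph{not} give $\frac{\mathrm{d}}{\mathrm{d}t}\tilde{\mathcal{E}}_k^s+C(\text{dissipation})\leq 0$: the cross-term estimates \eqref{lem-nx1} and \eqref{lem-1nx1} at the bottom order $\alpha=k$ leave $C\|\nabla^{k}w\|_{L^2}^2$, $C\|\nabla^{k}\psi\|_{L^2}^2$ and, crucially, $C\|\nabla^{k-1}\psi\|_{L^2}^2$ on the right-hand side with $O(1)$ constants. The first two can be absorbed after weighting by a small parameter and splitting $w$ into low and high frequencies, but $\|\nabla^{k-1}\psi\|_{L^2}^2$ is a \emph{lower}-order quantity: the splitting $\|\nabla^{k-1}\psi\|_{L^2}^2\lesssim\|\nabla^{k-1}\psi^\ell\|_{L^2}^2+\|\nabla^{k}\psi\|_{L^2}^2$ leaves a source $\|\nabla^{k-1}\psi^\ell\|_{L^2}^2$ that is not contained in your claimed source $\|\nabla^kU^\ell\|_{L^2}^2$ and, at the naive rate $(1+t)^{-3/2-(k-1)}$, is one power too slow. (In Proposition \ref{pro-est}, the case $k=1$, the analogous leftovers $\|\nabla w\|_{L^2}^2+\|\psi\|_{L^2}^2$ are disposed of only after integrating in time via Lemma \ref{lem-b}, which is unavailable when a pointwise-in-time ODE is required.) The paper's remedy is to run the two cross-term estimates on the high-frequency projections only, i.e.\ apply $\nabla^{\alpha-1}P_\infty$ to $\eqref{nsac}_2$ and $\eqref{nsac}_4$, producing \eqref{THZ02} and \eqref{2-001}: by \eqref{2-07-01} one has $\|\nabla^{\alpha-1}w^h\|_{L^2}\leq C\|\nabla^{\alpha}w^h\|_{L^2}$ and $\|\nabla^{\alpha-1}\psi^h\|_{L^2}\leq C\|\nabla^{\alpha}\psi^h\|_{L^2}$, so every leftover is absorbed by the dissipation and only the low-frequency parts of $n$ and $\phi$ survive as sources. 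Alternatively you could carry $\|\nabla^{k-1}\psi^\ell\|_{L^2}^2$ as an additional source and verify from the extra factor of $i\xi$ in $\hat{G}_{4j}$ that it decays like $(1+t)^{-3/2-k}$, but some such device is needed; as written, your energy step does not close. With that repair the remainder of your argument is exactly the paper's.
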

		\begin{proof}
			In terms of Lemmas \ref{lem-n}-\ref{lem-th} and Proposition \ref{pro-est},  it hold for $\alpha\geq 1$ that
			\begin{align}
				&\frac{1}{2}\frac{\mathrm{d}}{\mathrm{d}t}\left(\|\nabla^\alpha n\|_{L^2}^2+\|\nabla^\alpha w\|_{L^2}^2+\|\nabla^\alpha \phi\|_{L^2}^2+\frac{1}{1+n}\|\nabla^{\alpha}\psi\|_{L^2}^2\right)\notag\\
				&\quad+\frac{3\nu}{4}\|\nabla^{\alpha}\nabla w\|_{L^2}^2+\frac{3(\nu+\eta)}{4}\|\nabla^\alpha \text{div} w\|_{L^2}^2
				+\frac{1}{2\tau}\|\nabla^{\alpha} \psi\|_{L^2}^2\notag\\
				&\leq C\delta_0(\|\nabla^\alpha n\|_{L^2}^2+\|\nabla^\alpha w\|_{L^2}^2+\|\nabla^{\alpha}\nabla w\|_{L^2}^2+\|\nabla^{\alpha}\text{div} w\|_{L^2}^2+\|\nabla^\alpha \phi\|_{L^2}^2).\label{lem-hk2}
			\end{align}
			Applying the operator $\nabla^{\alpha-1} P_{\infty}$ to $\eqref{nsac}_2$, multiplying the resulting equation by $\nabla^{\alpha} n^h$, and then integrating it over $\mathbb{R}^3$, and using \eqref{2-07-01} and \eqref{2-07-02},  we obtain, for $\alpha\geq 1$ that
			\begin{align}
				&c\|\nabla^\alpha n^h\|_{L^2}^2+\frac{\mathrm{d}}{\mathrm{d}t}\int_{\mathbb{R}^3} \nabla^{\alpha-1}w^h\cdot\nabla^\alpha n^h\mathrm{d}x\notag\\
				&=\int_{\mathbb{R}^3} (\nu\nabla^{\alpha-1}\Delta w^h+(\nu+\eta)\nabla^{\alpha}{\rm{div}}w^h-\sigma \nabla^{\alpha}\phi^h)\cdot\nabla^\alpha n^h\mathrm{d}x+\int_{\mathbb{R}^3} \nabla^{\alpha-1}f_2^h\cdot\nabla^{\alpha}n^h\mathrm{d}x\notag\\
				&\quad-c\int_{\mathbb{R}^3}(\nabla^{\alpha}{\rm{div}}w^h+\nabla^{\alpha}f_1^h)\cdot \nabla^{\alpha-1}w\mathrm{d}x\notag\\
				&\triangleq X_1+X_2+X_3.
			\end{align}
			The first term $X_1$ is bounded by
			\begin{align}
				|X_1|&\leq  (\nu+\eta)(\|\nabla^\alpha \nabla w^h\|_{L^2}+\|\nabla^\alpha \text{div}w^h\|_{L^2})\|\nabla^\alpha n^h\|_{L^2}+\sigma\|\nabla^\alpha \phi^h\|_{L^2}\|\nabla^\alpha n^h\|_{L^2}\notag\\
				&\leq\varepsilon\|\nabla^{\alpha}n^h\|_{L^2}^2+C_{\varepsilon}(\|\nabla^\alpha \nabla w^h\|_{L^2}^2+\|\nabla^\alpha \text{div}w^h\|_{L^2}^2)+\frac{c}{4}\|\nabla^\alpha n^h\|_{L^2}^2+\frac{\sigma^2}{c}\|\nabla^\alpha \phi^h\|_{L^2}^2\notag\\
				&\leq (\varepsilon+\frac{c}{4})\|\nabla^\alpha n^h\|_{L^2}^2+C(\|\nabla^\alpha \nabla w^h\|_{L^2}^2+\|\nabla^\alpha \text{div}w^h\|_{L^2}^2)+\frac{\sigma^2}{c}\|\nabla^\alpha \phi^h\|_{L^2}^2\notag\\
				&\leq (\varepsilon+\frac{c}{4})\|\nabla^\alpha n\|_{L^2}^2+C(\|\nabla^\alpha \nabla w\|_{L^2}^2+\|\nabla^\alpha \text{div}w\|_{L^2}^2)+\frac{\sigma^2}{c}\|\nabla^\alpha \phi^h\|_{L^2}^2,
			\end{align}
			where $\varepsilon>0$ is arbitrary. For the term $X_2$, recalling the definition of $f_2$ in \eqref{non-f}, we only deal with the first term of $X_2$, i.e.,
			\begin{align}
				&	\left|\int_{\mathbb{R}^3}\nabla^{\alpha-1}P_\infty(w\cdot\nabla w)\cdot\nabla^\alpha n^h\mathrm{d}x\right|\notag\\
				&\leq \left|\int_{\mathbb{R}^3}\nabla^{\alpha}P_\infty(w\cdot\nabla w)\cdot\nabla^{\alpha-1} n^h\mathrm{d}x\right|\notag
				\\
				&\leq \left|\int_{\mathbb{R}^3}\nabla^{\alpha}(w\cdot\nabla w)\cdot\nabla^{\alpha-1} n^h\mathrm{d}x\right|+\left|\int_{\mathbb{R}^3}\nabla^{\alpha}P_1(w\cdot\nabla w)\cdot\nabla^{\alpha-1} n^h\mathrm{d}x\right|.\label{THZ01}
			\end{align}
			It is easy to prove that
			\begin{align*}
				&\left|\int_{\mathbb{R}^3}\nabla^{\alpha}(w\cdot\nabla w)\cdot\nabla^{\alpha-1} n^h\mathrm{d}x\right| \\
				&\leq \left|\int_{\mathbb{R}^3}[\nabla^{\alpha}(w\cdot\nabla w)-w\cdot\nabla^\alpha\nabla w]\cdot\nabla^{\alpha-1} n^h\mathrm{d}x\right|+\left|\int_{\mathbb{R}^3}w\cdot\nabla^\alpha\nabla w\cdot\nabla^{\alpha-1} n^h\mathrm{d}x\right|\\
				&\leq C(\|\nabla w\|_{L^\infty}\|\nabla^{\alpha}w\|_{L^2}+\|\nabla^{\alpha}w\|_{L^2}\|\nabla w\|_{L^\infty})\|\nabla^{\alpha-1} n^h\|_{L^2}+\|w\|_{L^\infty}\|\nabla^\alpha\nabla w\|_{L^2}\|\nabla^{\alpha-1} n^h\|_{L^2}\\
				&\leq C\|\nabla^2 w\|_{H^1}\|\nabla^{\alpha}w\|_{L^2}\|\nabla^{\alpha} n^h\|_{L^2}+C\|\nabla w\|_{H^1}\|\nabla^\alpha\nabla w\|_{L^2}\|\nabla^\alpha n^h\|_{L^2}\\
				&\leq C\delta_0(\|\nabla^\alpha n^h\|_{L^2}^2+\|\nabla^\alpha w\|_{L^2}^2+\|\nabla^\alpha\nabla w\|_{L^2}^2),
			\end{align*}
			where we have used the fact that $\|\nabla^{\alpha-1}n^h\|_{L^2}\leq C\|\nabla^\alpha n^h\|_{L^2}$.
			The second term on the right-hand side of 	\eqref{THZ01} is bounded by
			\begin{align*}
				\left|\int_{\mathbb{R}^3}\nabla^{\alpha}P_1(w\cdot\nabla w)\cdot\nabla^{\alpha-1} n^h\mathrm{d}x\right|
				&\leq \|\nabla^\alpha P_1(w\cdot\nabla w)\|_{L^2}\|\nabla^{\alpha-1}n^h\|_{L^2}\\
				&\leq C\|\nabla^\alpha (w\cdot\nabla w)\|_{L^2}\|\nabla^{\alpha}n^h\|_{L^2}\\
				&\leq C(\|w\|_{L^\infty}\|\nabla^{\alpha}\nabla w\|_{L^2}+\|\nabla^\alpha w\|_{L^2}\|\nabla w\|_{L^\infty})\|\nabla^{\alpha}n^h\|_{L^2}\\
				&\leq C\delta_0 (\|\nabla^\alpha n^h\|_{L^2}^2+\|\nabla^\alpha w\|_{L^2}^2+\|\nabla^\alpha \nabla w\|_{L^2}^2).
			\end{align*}
			Therefore, we have
			\begin{align*}
				\left|\int_{\mathbb{R}^3}\nabla^{\alpha-1}P_\infty(w\cdot\nabla w)\cdot\nabla^\alpha n^h\mathrm{d}x\right|
				\leq C\delta_0 (\|\nabla^\alpha n^h\|_{L^2}^2+\|\nabla^\alpha w\|_{L^2}^2+\|\nabla^\alpha \nabla w\|_{L^2}^2).
			\end{align*}
			The remainders of $X_2$ can be estimated similarly. Thus, we have
			\begin{align*}
				|X_2|&\leq \Big|\int_{\mathbb{R}^3} \nabla^{\alpha-1}P_{\infty}\Big(-cw\cdot\nabla w+\frac{(cn-\sigma\phi)\nabla n}{1+n}-\frac{\nu n\Delta w+(\nu+\eta)n\nabla{\rm{div}}w}{1+n}\Big)\cdot \nabla^{\alpha}n^h\mathrm{d}x\Big|\\
				&\leq C\delta_0(\|\nabla^\alpha n\|_{L^2}^2+\|\nabla^\alpha n^h\|_{L^2}^2+\|\nabla^{\alpha} w\|_{L^2}^2
				+\|\nabla^\alpha\nabla w\|_{L^2}^2+\|\nabla^\alpha\text{div} w\|_{L^2}^2+\|\nabla^\alpha \phi\|_{L^2}^2).
			\end{align*}
			Finally, we deal with the term $X_3$. It holds
			\begin{align}
				|X_3|
				&\leq \Big| c\int_{\mathbb{R}^3}(\nabla^{\alpha}w^h+\nabla^{\alpha}P_\infty(nw))\cdot \nabla^{\alpha}w^h \mathrm{d}x\Big|\notag\\
				&\leq c\|\nabla^\alpha w^h\|_{L^2}^2+C\|\nabla^\alpha P_\infty(nw)\|_{L^2}\|\nabla^\alpha w^h\|_{L^2}\notag\\
				&\leq c\|\nabla^\alpha w^h\|_{L^2}^2+C\|\nabla^\alpha (nw)\|_{L^2}\|\nabla^\alpha w^h\|_{L^2}\notag\\
				&\leq c\|\nabla^\alpha w^h\|_{L^2}^2+C(\|n\|_{L^\infty}\|\nabla^{\alpha} w\|_{L^2}+\|w\|_{L^\infty}\|\nabla^\alpha n\|_{L^2})\|\nabla^\alpha w\|_{L^2}\notag\\
				&\leq C\|\nabla^\alpha w^h\|_{L^2}^2+C\delta_0(\|\nabla^\alpha n\|_{L^2}^2+\|\nabla^\alpha w\|_{L^2}^2).
			\end{align}
			Combining the estimates of $X_i$ $(i=1,2,3)$ and noting the smallness of $\delta_0$ and $\varepsilon$ yield
			\begin{align}
				&\frac{c}{2}\|\nabla^\alpha n^h\|_{L^2}^2+\frac{\mathrm{d}}{\mathrm{d}t}\int_{\mathbb{R}^3} \nabla^{\alpha-1}w^h\cdot\nabla^\alpha n^h\mathrm{d}x\notag\\
				&\leq C\delta_0(\|\nabla^\alpha n\|_{L^2}^2+\|\nabla^\alpha \phi\|_{L^2}^2)+\frac{\sigma^2}{c}\|\nabla^\alpha\phi^h\|_{L^2}^2\notag\\
				&\quad+C(\|\nabla^\alpha w\|_{L^2}^2+\|\nabla^{\alpha}\nabla w\|_{L^2}^2
				+\|\nabla^{\alpha}\text{div} w\|_{L^2}^2).\label{THZ02}
			\end{align}
			Similarly, it also holds for $\alpha\geq 1$ that
			\begin{align}
				&\frac{3b}{4}\|\nabla^\alpha \phi^h\|_{L^2}^2+\frac{\mathrm{d}}{\mathrm{d}t}\int_{\mathbb{R}^3} \nabla^{\alpha-1}\psi^h\cdot\nabla^\alpha \phi^h\mathrm{d}x\notag\\
				&\leq  C\delta_0(\|\nabla^\alpha n\|_{L^2}^2+\|\nabla^\alpha \nabla w\|_{L^2}^2+\|\nabla^\alpha\text{div}w\|_{L^2}^2+\|\nabla^\alpha \phi\|_{L^2}^2+\|\nabla^\alpha\psi\|_{L^2}^2)\notag\\
				&\quad+C(\|\nabla^\alpha  w\|_{L^2}^2+\|\nabla^{\alpha}\psi^h \|_{L^2}^2).\label{2-001}
			\end{align}
			Taking $\eqref{THZ02}\times\frac{bc}{4\sigma^2}+\eqref{2-001}$, one has
			\begin{align}
				&\frac{bc^2}{8\sigma^2}\|\nabla^\alpha n^h\|_{L^2}^2+\frac{b}{2}\|\nabla^\alpha\phi^h\|_{L^2}^2+\frac{\mathrm{d}}{\mathrm{d}t}\Big\{\int_{\mathbb{R}^3} \Big(\frac{bc}{4\sigma^2}\nabla^{\alpha-1}w^h\cdot\nabla^\alpha n^h+\nabla^{\alpha-1}\psi^h\cdot\nabla^\alpha \phi^h\Big)\mathrm{d}x\Big\}\notag\\
				&\leq C\delta_0(\|\nabla^\alpha n\|_{L^2}^2+\|\nabla^\alpha \phi\|_{L^2}^2+\|\nabla^\alpha\psi\|_{L^2}^2)\notag\\
				&\quad+C(\|\nabla^\alpha w\|_{L^2}^2+\|\nabla^\alpha\nabla w\|_{L^2}^2+\|\nabla^\alpha \text{div}w\|_{L^2}^2+\|\nabla^{\alpha}\psi^h\|_{L^2}^2).\label{THZ03}
			\end{align}
			Let the positive constant $\beta_2$ be sufficiently small. Then, taking $\eqref{lem-hk2}+\beta_2\times\eqref{THZ03}$ yields
			$$
			\begin{aligned}
				&	\frac{\mathrm{d}}{\mathrm{d}t}\Big\{\frac{1}{2}\Big(\|\nabla^\alpha n\|_{L^2}^2+\|\nabla^\alpha w\|_{L^2}^2+\|\nabla^\alpha \phi\|_{L^2}^2+\frac{1}{1+n}\|\nabla^{\alpha}\psi\|_{L^2}^2\Big)\\
				&\quad+\beta_2\int_{\mathbb{R}^3} \Big(\frac{bc}{4\sigma^2}\nabla^{\alpha-1}w^h\cdot\nabla^\alpha n^h+\nabla^{\alpha-1}\psi^h\cdot\nabla^\alpha \phi^h\Big)\mathrm{d}x\Big\}\\
				&\quad+\frac{3\nu}{4}\|\nabla^{\alpha}\nabla w\|_{L^2}^2+\frac{3(\nu+\eta)}{4}\|\nabla^\alpha \text{div} w\|_{L^2}^2+\frac{1}{2\tau}\|\nabla^\alpha\psi\|_{L^2}^2+\frac{bc^2\beta_2}{8\sigma^2}\|\nabla^\alpha n^h\|_{L^2}^2+\frac{b\beta_2}{2}\|\nabla^\alpha \phi^h\|_{L^2}^2\\
				&\leq  C\delta_0(\|\nabla^\alpha n\|_{L^2}^2+\|\nabla^\alpha w\|_{L^2}^2+\|\nabla^{\alpha}\nabla w\|_{L^2}^2+\|\nabla^{\alpha}\text{div} w\|_{L^2}^2+\|\nabla^\alpha \phi\|_{L^2}^2+\|\nabla^\alpha\psi\|_{L^2}^2)\\
				&\quad+ C\delta_0\beta_2(\|\nabla^\alpha n\|_{L^2}^2+\|\nabla^\alpha \phi\|_{L^2}^2)+C\beta_2(\|\nabla^\alpha w\|_{L^2}^2+\|\nabla^\alpha\nabla w\|_{L^2}^2+\|\nabla^\alpha \text{div}w\|_{L^2}^2+\|\nabla^{\alpha}\psi^h\|_{L^2}^2).\label{THZ04}
			\end{aligned}
			$$
			Thanks to the smallness of $\beta_2$ and $\delta_0$, we verify for $\alpha\geq 1$ that
			\begin{align}
				&	\frac{\mathrm{d}}{\mathrm{d}t}\Big\{ \frac{1}{2}\Big(\|\nabla^\alpha n\|_{L^2}^2+\|\nabla^\alpha w\|_{L^2}^2+\|\nabla^\alpha \phi\|_{L^2}^2+\frac{1}{1+n}\|\nabla^{\alpha}\psi\|_{L^2}^2\Big)\notag\\
				&\quad+\beta_2\int_{\mathbb{R}^3}\Big(\frac{bc}{4\sigma^2}\nabla^{\alpha-1}w^h\cdot\nabla^\alpha n^h+\nabla^{\alpha-1}\psi^h\cdot\nabla^\alpha \phi^h\Big)\mathrm{d}x\Big\}\notag\\
				&\quad+\frac{\nu}{2}\|\nabla^{\alpha}\nabla w\|_{L^2}^2+\frac{\nu+\eta}{2}\|\nabla^\alpha \text{div} w\|_{L^2}^2+\frac{1}{4\tau}\|\nabla^\alpha\psi\|_{L^2}^2+\frac{bc^2\beta_2}{16\sigma^2}\|\nabla^\alpha n^h\|_{L^2}^2+\frac{b\beta_2}{4}\|\nabla^\alpha \phi^h\|_{L^2}^2\notag\\
				&\leq  C\delta_0(\|\nabla^{\alpha}n\|_{L^2}^2+\|\nabla^{\alpha}w\|_{L^2}^2+\|\nabla^{\alpha}\phi\|_{L^2}^2+\|\nabla^\alpha \psi^h\|_{L^2}^2).\label{lem-hk4}
			\end{align}
			It is easy to verify that
			\begin{equation*}
				\begin{aligned}
					&\frac{1}{2}\Big(\|\nabla^\alpha n\|_{L^2}^2+\|\nabla^\alpha w\|_{L^2}^2+\|\nabla^\alpha \phi\|_{L^2}^2+\frac{1}{1+n}\|\nabla^{\alpha}\psi\|_{L^2}^2\Big)\\
					&\quad +\beta_2\int_{\mathbb{R}^3}\Big(\frac{bc}{4\sigma^2}\nabla^{\alpha-1}w^h\cdot\nabla^\alpha n^h+\nabla^{\alpha-1}\psi^h\cdot\nabla^\alpha \phi^h\Big)\mathrm{d}x\\
					&\sim (\|\nabla^\alpha n\|_{L^2}^2+\|\nabla^\alpha w\|_{L^2}^2+\|\nabla^\alpha \phi\|_{L^2}^2+\|\nabla^{\alpha}\psi\|_{L^2}^2),
				\end{aligned}
			\end{equation*}
			which, together with \eqref{lem-hk4}, \eqref{2-07-01} and \eqref{2-07-02}, yields the following results, for $\alpha\geq 1$,
			\begin{align}
				&\frac{\mathrm{d}}{\mathrm{d}t}\Big(\|\nabla^\alpha (n,w,\phi,\psi)\|_{L^2}^2+\beta_2\int_{\mathbb{R}^3}\big(\nabla^{\alpha-1}w^h\cdot\nabla^\alpha n^h+\nabla^{\alpha-1}\psi^h\cdot\nabla^\alpha \phi^h\big)\mathrm{d}x\Big)\notag\\
				&\quad+C_3\Big(\|\nabla^\alpha (n,w,\phi,\psi)\|_{L^2}^2+\beta_2\int_{\mathbb{R}^3}\big(\nabla^{\alpha-1}w^h\cdot\nabla^\alpha n^h+\nabla^{\alpha-1}\psi^h\cdot\nabla^\alpha \phi^h\big)\mathrm{d}x\Big)\notag\\
				&\leq C(\|\nabla^{\alpha}n^\ell\|_{L^2}^2+\|\nabla^{\alpha}w^\ell\|_{L^2}^2+\|\nabla^{\alpha}\phi^\ell\|_{L^2}^2+\|\nabla^\alpha \psi^\ell \|_{L^2}^2),\label{lem-hk5}
			\end{align}
			where $C_3$ is a positive constant independent of time. Summing up \eqref{lem-hk5} from $k\leq \alpha\leq s$ and using the definition of $\mathcal{E}_k^s(t)$, one has
			\begin{equation}\label{lem-hk-e}
				\frac{\mathrm{d}}{\mathrm{d}t}\mathcal{E}_{k}^s(t)+C_4\mathcal{E}_{k}^s(t)\leq C\|\nabla^{k}(n^{\ell},w^{\ell},\phi^{\ell},\psi^\ell)(t)\|_{L^2}^2.
			\end{equation}
			
			To investigate the time-decay rates of the norm $\|\nabla(n,w,\phi,\psi)(t)\|_{H^{s-1}}$, we follow in a similar method as above. First, we calculate
			\begin{align}
				\|\nabla(n^{\ell},w^{\ell},\phi^{\ell})(t)\|_{L^2}&\leq C(1+t)^{-\frac{5}{4}}\|(n_0,w_0,\phi_0,\psi_0)\|_{L^1}+C\int_0^{\frac{t}{2}}(1+t-r)^{-\frac{5}{4}}\|F(r)\|_{L^1}\mathrm{d}r\notag\\
				&\quad+C\int_{\frac{t}{2}}^t(1+t-r)^{-\frac{3}{4}}\|\nabla F(r)\|_{L^1}\mathrm{d}r,\label{lem-hk6}
			\end{align}
			and
			\begin{align}
				\|\nabla\psi^\ell(t)\|_{L^2}&\leq C(1+t)^{-\frac{7}{4}}\|(n_0,w_0,\phi_0,\psi_0)\|_{L^1}+C\int_0^{\frac{t}{2}}(1+t-r)^{-\frac{7}{4}}\|F(r)\|_{L^1}\mathrm{d}r\notag\\
				&\quad+C\int_{\frac{t}{2}}^t(1+t-r)^{-\frac{5}{4}}\|\nabla F(r)\|_{L^1}\mathrm{d}r.\label{lem-hk61}
			\end{align}
			Next, we need to estimate the second term on the right-hand side of \eqref{lem-hk6}.  By the definition of $F$, \eqref{l-inf-est}, \eqref{non-f} and \eqref{lem-hs0}, one has
			\begin{align}
				\|F(r)\|_{L^1}&\leq C(\|n\|_{L^2}\|\nabla w\|_{L^2}+\|w\|_{L^2}\|\nabla n\|_{L^2}+\|w\|_{L^2}\|\nabla w\|_{L^2}+\|\phi\|_{L^2}\|\nabla n\|_{L^2}+\|n\|_{L^2}\|\nabla n\|_{L^2}\notag\\
				&\quad+\|n\|_{L^2}\|\nabla^2w\|_{L^2}+\|w\|_{L^2}\|\nabla \phi\|_{L^2}+\|\phi\|_{L^2}\|\nabla w\|_{L^2}+\|n\|_{L^2}\|\nabla\psi\|_{L^2}+\|\nabla w\|_{L^2}^2)\notag\\
				&\leq C(1+r)^{-\frac{3}{2}}.\label{lem-hk7}
			\end{align}
			Similarly, we also have
			\begin{equation}\label{lem-hk8}
				\|\nabla F(r)\|_{L^1}\leq C(1+r)^{-\frac{3}{2}}.
			\end{equation}
			Substituting \eqref{lem-hk7}, \eqref{lem-hk8} into \eqref{lem-hk6}, \eqref{lem-hk61} yields
			\begin{equation*}
				\begin{aligned}
					\|\nabla(n^{\ell},w^{\ell},\phi^{\ell})(t)\|_{L^2}&\leq C(1+t)^{-\frac{5}{4}}+C\int_0^{\frac{t}{2}}(1+t-r)^{-\frac{5}{4}}(1+r)^{-\frac{3}{2}}\mathrm{d}r\\
					&\quad+C\int_{\frac{t}{2}}^t(1+t-r)^{-\frac{3}{4}}(1+r)^{-\frac{3}{2}}\mathrm{d}r\\
					&\leq C(1+t)^{-\frac{5}{4}},
				\end{aligned}
			\end{equation*}
			and
			\begin{equation*}
				\begin{aligned}
					\|\nabla \psi^{\ell}(t)\|_{L^2}&\leq C(1+t)^{-\frac{7}{4}}+C\int_0^{\frac{t}{2}}(1+t-r)^{-\frac{7}{4}}(1+r)^{-\frac{3}{2}}\mathrm{d}r\\
					&\quad+C\int_{\frac{t}{2}}^t(1+t-r)^{-\frac{5}{4}}(1+r)^{-\frac{3}{2}}\mathrm{d}r\\
					&\leq C(1+t)^{-\frac{3}{2}}.
				\end{aligned}
			\end{equation*}
			By \eqref{lem-hk-e}, we have
			\begin{equation*}
				\frac{\mathrm{d}}{\mathrm{d}t}\mathcal{E}_{1}^s(t)+C_4\mathcal{E}_{1}^s(t)\leq C(\|\nabla(n^{\ell},w^{\ell},\phi^{\ell})(t)\|_{L^2}^2+\|\nabla\psi^\ell(t)\|_{L^2}^2) \leq C(1+t)^{-\frac{5}{2}}.
			\end{equation*}
		 It holds
			\begin{equation*}
				\mathcal{E}_1^s(t)\leq C(1+t)^{-\frac{5}{2}}.
			\end{equation*}
		Due to the smallness of $\beta_2$, we obtain that $\mathcal{E}_k^s(t)\sim \|\nabla^k(n,w,\phi,\psi)\|_{H^{s-k}}^2$, which gives that
			\begin{equation}\label{THZ1}
				\|\nabla (n,w,\phi,\psi)(t)\|_{H^{s-1}}\leq C(1+t)^{-\frac{5}{4}},
			\end{equation}
			for $s\geq 3$. Based on the above facts, we can further show that, for high-order derivatives $(1\leq k \leq s)$, it holds
			\begin{equation}\label{A-01}
				\|\nabla^k (n,w,\phi,\psi)(t)\|_{H^{s-k}}\leq C(1+t)^{-\frac{3}{4}-\frac{k}{2}}.
			\end{equation}
			In fact, we can prove the above time-decay rates \eqref{A-01} by induction method. The case of $k=1$ is proved by \eqref{THZ1}.  Then, we assume it holds for $k-1$, i.e.,
			\begin{align*}
				\|\nabla^{k-1}(n,w,\phi,\psi)(t)\|_{H^{s-k+1}}\leq C(1+t)^{-\frac{1}{4}-\frac{k}{2}},\quad 2\leq k\leq s.
			\end{align*}
			The next goal is to establish the time-decay rates of $\|\nabla^k(n,w,\phi,\psi)(t)\|_{H^{s-k}}$ for $2\leq k\leq s$. Noting that
			\begin{align}
				\|\nabla^k(n^{\ell},w^{\ell},\phi^{\ell},\psi^\ell)(t)\|_{L^2}&\leq C(1+t)^{-\frac{3}{4}-\frac{k}{2}}\|(n_0,w_0,\phi_0,\psi_0)\|_{L^1}+C\int_0^{\frac{t}{2}}(1+t-r)^{-\frac{3}{4}-\frac{k}{2}}\|F(r)\|_{L^1}\mathrm{d}r\notag\\
				&\quad+C\int_{\frac{t}{2}}^t(1+t-r)^{-\frac{7}{4}}\|\nabla^{k-2}F(r)\|_{L^1}\mathrm{d}r.\label{lem-hk9}
			\end{align}
			It should be noted that at this moment, we have
			\begin{equation*}
				\|(n,w,\phi,\psi)(t)\|_{L^2}\leq C(1+t)^{-\frac{3}{4}},\quad \|\nabla^{k-1}(n,w,\phi,\psi)(t)\|_{H^{s-k+1}}\leq C(1+t)^{-\frac{1}{4}-\frac{k}{2}}.
			\end{equation*}
			As a result, we obtain
			\begin{equation}\label{lem-hk10}
				\begin{aligned}
					\|F(r)\|_{L^1} \leq C(1+r)^{-2},\quad \|\nabla^{k-2}F(r)\|_{L^1}\leq C(1+r)^{-\frac{3}{4}-\frac{k}{2}}.
				\end{aligned}
			\end{equation}
			Substituting \eqref{lem-hk10} into \eqref{lem-hk9} yields
			\begin{equation*}
				\begin{aligned}
					\|\nabla^k(n^{\ell},w^{\ell},\phi^{\ell},\psi^\ell)(t)\|_{L^2}&\leq C(1+t)^{-\frac{3}{4}-\frac{k}{2}}+C\int_0^{\frac{t}{2}}(1+t-r)^{-\frac{3}{4}-\frac{k}{2}}(1+r)^{-2}\mathrm{d}r\\
					&\quad+C\int_{\frac{t}{2}}^t(1+t-r)^{-\frac{7}{4}}(1+r)^{-\frac{3}{4}-\frac{k}{2}}\mathrm{d}r\\
					&\leq C(1+t)^{-\frac{3}{4}-\frac{k}{2}}.
				\end{aligned}
			\end{equation*}
			In view of \eqref{lem-hk-e}, it holds
			\begin{equation}\label{lem-hk12}
				\frac{d}{\mathrm{d}r}\mathcal{E}_{k}^s(t)+C_4\mathcal{E}_{k}^s(t)\leq C(1+t)^{-\frac{3}{2}-k}.
			\end{equation}
			Applying Gr\"{o}nwall's inequality to \eqref{lem-hk12}, we have
			\begin{equation*}
				\mathcal{E}_k^s(t)\leq C(1+t)^{-\frac{3}{2}-k},
			\end{equation*}
			which implies for $1\leq k\leq s$ and $s\geq 3$ that
			\begin{equation}\label{321}
				\|\nabla^k(n,w,\phi,\psi)(t)\|_{H^{s-k}}\leq C(1+t)^{-\frac{3}{4}-\frac{k}{2}}.
			\end{equation}
			This completes the proof of \eqref{A-01}.
			
			To enhance the time-decay rate of $\|\psi\|_{L^2}$, we need to apply the damping structure of $\psi$. By $\eqref{nsac}_4$, the solution $\psi$ can be written as
			\begin{align*}
				\psi(x,t)=e^{-\frac{1}{\tau} t}\psi_0-b\int_0^te^{-\frac{1}{\tau}(t-r)}\nabla\phi(x,r)\mathrm{d}r.
			\end{align*}
			In terms of \eqref{321}, one has
			$$
			\begin{aligned}
				\|\psi(t)\|_{L^2}
				&\leq  e^{-\frac{1}{\tau} t}\|\psi_0\|_{L^2}+b\int_0^t e^{-\frac{1}{\tau}(t-r)}\|\nabla\phi(r)\|_{L^2}\mathrm{d}r\\
				&\leq C(1+t)^{-\frac{5}{4}}\|\psi_0\|_{L^2}+C\int_0^t(1+t-r)^{-\frac{5}{4}}(1+r)^{-\frac{5}{4}}\mathrm{d}r\\
				&\leq C(1+t)^{-\frac{5}{4}}.
			\end{aligned}
			$$
			It then follows the same method for the derivatives $\nabla^k\psi$ to show that
			\begin{equation}\label{322}
				\|\nabla^{k}\psi(t)\|_{L^2}\leq C(1+t)^{-\frac{5}{4}-\frac{k}{2}}\quad \text{for}\quad 1\leq k\leq s-1.
			\end{equation}
			The combination of \eqref{321} and \eqref{322} implies for $1\leq k\leq s-1$ that
			\begin{align*}
				\|\nabla^k(n,w,\phi)(t)\|_{L^2}\leq C(1+t)^{-\frac{3}{4}-\frac{k}{2}},\quad
				\|\nabla^{k}\psi(t)\|_{L^2}\leq C(1+t)^{-\frac{5}{4}-\frac{k}{2}},
			\end{align*}
			and
			\begin{align*}
				\|\nabla^{s}(n,w,\phi,\psi)(t)\|_{L^2}\leq C(1+t)^{-\frac{3}{4}-\frac{s}{2}}.
			\end{align*}
			Therefore, we complete the proof Lemma \ref{lem-hk}.
		\end{proof}
		
		It should be pointed out that at this moment, the time-decay rate of the highest order derivatives of $\psi$ is not optimal and can be improved. Thus, we aim to establish the decay estimates $\|\nabla^s\psi\|_{L^2}$ under some new observations in the following lemma.
		
		\begin{lem}\label{lemphi}
			Assume that the conditions in Theorem \ref{thm1} hold. Let $(n,w,\phi,\psi)$ be the classical solution of the problem \eqref{nsac}-\eqref{non-f}. Then it holds
			\begin{align*}
				\|\nabla^s\psi(t)\|_{L^2}\leq C(1+t)^{-\frac{5}{4}-\frac{s}{2}},
			\end{align*}
			where the positive constant $C$ is independent of time.
		\end{lem}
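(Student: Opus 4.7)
The plan is to bound the highest-order derivative $\nabla^s\psi$ via a low-high frequency decomposition $\psi=\psi^\ell+\psi^h$ using the projections $P_1, P_\infty$ from Section \ref{Sec2}, and to show that each piece decays at the improved rate $(1+t)^{-5/4-s/2}$. The direct Duhamel approach based on $\psi(t)=e^{-t/\tau}\psi_0-b\int_0^te^{-(t-r)/\tau}\nabla\phi(r)\,dr$ (which was used successfully for $\nabla^k\psi$ with $k\leq s-1$ in Lemma \ref{lem-hk}) fails at order $s$ because it would require an estimate of $\nabla^{s+1}\phi$, which exceeds the available energy norm.

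For the low-frequency part, I apply the Duhamel representation \eqref{nonlinear-v} of the nonlinear system and take $\nabla^sP_1$ of the $\psi$-row. The explicit expressions for $\hat G^\ell_{4j}(\xi,t)$ displayed in Section \ref{Sec4-1} carry an extra factor of $|\xi|$ relative to the $(n,w,\phi)$-rows; this factor comes from the $b\nabla$ coupling in Cattaneo's law and yields $\|\nabla^sG^\ell_{4\cdot}(t)\|_{L^2}\leq C(1+t)^{-5/4-s/2}$. The linear-data term is then handled by Young's convolution inequality for $L^1\cap L^2$ initial data, exactly as in the proof of Proposition \ref{pro-li-d}. For the Duhamel integral, I would split $[0,t]=[0,t/2]\cup[t/2,t]$. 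On $[0,t/2]$, Young's inequality combined with $\|F(r)\|_{L^1}\lesssim(1+r)^{-3/2}$ from \eqref{lem-hk7} and the integrability of this bound gives the desired rate. On $[t/2,t]$, I transfer $s-1$ derivatives to $F$ using $\nabla^s(G^\ell*F)=\nabla G^\ell*\nabla^{s-1}F$, leaving only $\|\nabla G^\ell_{4\cdot}(t-r)\|_{L^2}\leq C(1+t-r)^{-7/4}$ (integrable in $r$) times $\|\nabla^{s-1}F(r)\|_{L^1}\lesssim(1+r)^{-3/2-s/2}$ derived via the Leibniz rule and Hölder's inequality from the full hierarchy of decay estimates in Lemma \ref{lem-hk}.

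For the high-frequency part, Lemma \ref{lem-lxi} shows that on $|\xi|\geq r_0$ all eigenvalues satisfy $\operatorname{Re}\lambda_k\leq-R$ for some fixed $R>0$, so the entries of $\hat G^h(\xi,t)$ admit the pointwise bound $|\hat G^h(\xi,t)|\leq Ce^{-Rt}$ up to mild polynomial prefactors in $|\xi|$ that are absorbed by the $H^s$-regularity of $U_0$ via Parseval's identity. Applying $\nabla^sP_\infty$ to Duhamel, the linear term then decays like $Ce^{-Rt}\|U_0\|_{H^s}$, and the nonlinear integral is controlled by the same $[0,t/2]\cup[t/2,t]$ splitting; the exponential weight $e^{-R(t-r)}$ dominates any polynomial factor, producing a rate strictly faster than $(1+t)^{-5/4-s/2}$. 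Adding the two pieces yields $\|\nabla^s\psi(t)\|_{L^2}\leq\|\nabla^s\psi^\ell(t)\|_{L^2}+\|\nabla^s\psi^h(t)\|_{L^2}\leq C(1+t)^{-5/4-s/2}$.

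The main obstacle will be the low-frequency estimate on $[t/2,t]$: one must distribute the $s-1$ derivatives across the quadratic (and higher-order) terms in $F$ from \eqref{non-f} so that every resulting product gains the fast rate $(1+r)^{-3/2-s/2}$. This uses the full decay hierarchy from Lemma \ref{lem-hk}, and critically exploits the extra $(1+r)^{-1/2}$ gain that $\nabla^k\psi$ enjoys over $\nabla^k(n,w,\phi)$ for $k\leq s-1$, which is precisely what allows the nonlinear terms involving $\psi$ (such as $bn\,\text{div}\psi/(1+n)$ inside $f_3$) to meet the same improved rate as the purely $(n,w,\phi)$-quadratic terms.
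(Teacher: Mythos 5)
Your low-frequency argument is essentially the paper's: Duhamel with the explicit $\hat G^{\ell}_{4j}$ kernels, the $[0,t/2]\cup[t/2,t]$ splitting, $\|F(r)\|_{L^1}\lesssim (1+r)^{-2}$ on the first piece and derivatives transferred to $F$ on the second, giving $\|\nabla^s\psi^\ell(t)\|_{L^2}\leq C(1+t)^{-\frac{5}{4}-\frac{s}{2}}$. The gap is in your high-frequency part. You propose to treat $\nabla^s\psi^h$ by the same Duhamel representation, claiming the entries of $\hat G^h$ are bounded by $Ce^{-Rt}$ ``up to mild polynomial prefactors in $|\xi|$ absorbed by the $H^s$-regularity of $U_0$.'' That works for the linear term $G^h*U_0$, but it fails for the nonlinear integral $\int_0^t \nabla^s G^h(t-r)*F(r)\,\mathrm{d}r$: at high frequency the kernel entries do not decay in $|\xi|$, so you cannot shift any of the $s$ derivatives onto $G^h$, and you are forced to estimate $\|\nabla^s F\|_{L^2}$. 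But $\nabla^s f_1$ contains $w\cdot\nabla^{s+1}n$, $\nabla^s f_2$ contains $n\nabla^{s+2}w/(1+n)$, and $\nabla^s f_3$ contains $n\nabla^{s+1}\psi/(1+n)$ --- none of $\nabla^{s+1}n$, $\nabla^{s+2}w$, $\nabla^{s+1}\psi$ is controlled by the $H^s$ energy \eqref{pro-est1}. This is exactly the derivative-loss obstruction you correctly identified for the naive formula $\psi=e^{-t/\tau}\psi_0-b\int_0^te^{-(t-r)/\tau}\nabla\phi\,\mathrm{d}r$ at order $s$; it reappears, unaddressed, in your high-frequency Duhamel term. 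The exponential weight $e^{-R(t-r)}$ helps with the time integral but does nothing about the derivative count.

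The paper closes this by abandoning the Green function at high frequency altogether: it applies $P_\infty$ to the system to get \eqref{nsac1} and redoes the full $\nabla^s$-level energy hierarchy of Lemmas \ref{lem-n}--\ref{lem-1nx} for $(n^h,w^h,\phi^h,\psi^h)$, now bounding every nonlinear commutator by $C(1+t)^{-3-s}$ using the already-established rates \eqref{lem-hk1} and the $L^\infty$ decay from interpolation (integrations by parts and the equation $\partial_t\psi^h+\frac{1}{\tau}\psi^h+b\nabla\phi^h=0$ are used to avoid ever touching $\nabla^{s+1}$ of anything). This yields a Lyapunov inequality whose Gr\"{o}nwall solution gives $\|\nabla^s(n^h,w^h,\phi^h,\psi^h)(t)\|_{L^2}\leq C(1+t)^{-\frac{3}{2}-\frac{s}{2}}$, strictly faster than the target rate, and then the two frequency pieces are added as you intended. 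To repair your proof you would need to replace your high-frequency Duhamel step with such an energy argument (or some other device that avoids losing a derivative in $F$).
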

		\begin{proof}
			Applying the operator $ P_{\infty}$ to $\eqref{nsac}$, the high-frequency part of the solution can be written as
			\begin{equation}\label{nsac1}
				\left\{
				\begin{aligned}
					&\partial_tn^h +c\,{\rm{div}}w^h=f_1^h, \\
					&\partial_tw^h+c\nabla n^h+\sigma \nabla \phi^h=\nu\Delta w^h+(\nu+\eta)\nabla{\rm{div}}w^h+f_2^h,\\
					&\partial_t\phi^h+\sigma {\rm{div}}w^h+b\text{div}\psi^h=f_3^h,\\
					&\partial_t\psi^h+\frac{1}{\tau}\psi^h+b\nabla\phi^h=0,
				\end{aligned}
				\right.
			\end{equation}
			where the nonliner terms $f_i^h=P_{\infty}f_i$ and $f_i$ are defined in \eqref{non-f}.
			
			Applying the operator $\nabla^s$ to $\eqref{nsac1}_1$, multiplying the resulting equation by $\nabla^s n^h$, and then integrating over $\mathbb{R}^3$, we obtain
			\begin{align*}
				&\frac{1}{2}\frac{\mathrm{d}}{\mathrm{d}t}\|\nabla^sn^h\|_{L^2}^2+c\int_{\mathbb{R}^3} \nabla^s\text{div}w^h\cdot\nabla^sn^h \mathrm{d}x\\			&=\int_{\mathbb{R}^3}\nabla^sf_1^h\cdot\nabla^sn^h\mathrm{d}x=\int_{\mathbb{R}^3}\nabla^sf_1\cdot\nabla^sn^h\mathrm{d}x-\int_{\mathbb{R}^3}\nabla^s f_1^\ell\cdot\nabla^sn^h\mathrm{d}x.
			\end{align*}
			By Lemma \ref{lem-commu}, \eqref{pro-est1} and H$\ddot{\text{o}}$lder's inequality, one has
			\begin{align*}
				&\Big|\int_{\mathbb{R}^3}\nabla^sf_1\cdot\nabla^sn^h\mathrm{d}x\Big|\\
				&\leq c\Big|\int_{\mathbb{R}^3} \nabla^s(w\cdot\nabla n)\cdot\nabla^s n^h \mathrm{d}x\Big|
				+c\Big|\int_{\mathbb{R}^3}\nabla^s (n\text{div}w)\cdot\nabla^s n^h \mathrm{d}x\Big|\\
				&\leq c\left| \int_{\mathbb{R}^3} (\nabla^s(w\cdot\nabla n)-w\cdot\nabla^s\nabla n)\cdot\nabla^s n^h\mathrm{d}x\right|+c\left|\int_{\mathbb{R}^3} w\cdot\nabla\nabla^s n\cdot\nabla^sn^h\mathrm{d}x\right|\\
				&\quad+c\left| \int_{\mathbb{R}^3} (\nabla^s(n\text{div}w )-n\nabla^s\text{div}w)\cdot\nabla^s n^h\mathrm{d}x\right|+c\left|\int_{\mathbb{R}^3} n\nabla^s\text{div} w\cdot\nabla^s n^h\mathrm{d}x\right|\\
				&\leq C(\|\nabla w\|_{L^\infty}\|\nabla^s n\|_{L^2}+\|\nabla n\|_{L^\infty}\|\nabla^s w\|_{L^2})\|\nabla^sn^h\|_{L^2}+C\|\text{div} w\|_{L^\infty}\|\nabla^s n^h\|_{L^2}^2
				+C\|w\|_{L^\infty}\|\nabla^sn\|_{L^2}^2\\
				&\quad+C(\|\nabla n\|_{L^\infty}\|\nabla^s w\|_{L^2}+\|\nabla^s n\|_{L^2}\|\text{div}w\|_{L^\infty})\|\nabla^s n^h\|_{L^2}+C\|n\|_{L^\infty}\|\nabla^s\text
				{div}w\|_{L^2}\|\nabla^sn^h\|_{L^2}  \\
				&\leq C(\|\nabla w\|_{L^\infty}\|\nabla^s n\|_{L^2}+\|\nabla n\|_{L^\infty}\|\nabla^s w\|_{L^2})\|\nabla^sn\|_{L^2}
				+C\|w\|_{L^\infty}\|\nabla^sn\|_{L^2}^2\\
				&\quad+C\|n\|_{L^\infty}\|\nabla^s\text
				{div}w^h\|_{L^2}\|\nabla^sn\|_{L^2}+C\|n\|_{L^\infty}\|\nabla^s\text
				{div}w^\ell\|_{L^2}\|\nabla^sn\|_{L^2}  \\
				&\leq C(\|\nabla w\|_{L^\infty}\|\nabla^s n\|_{L^2}+\|\nabla n\|_{L^\infty}\|\nabla^s w\|_{L^2})\|\nabla^sn\|_{L^2}
				+C\|w\|_{L^\infty}\|\nabla^sn\|_{L^2}^2\\
				&\quad+C\|n\|_{L^\infty}\|\nabla^s\text
				{div}w^h\|_{L^2}\|\nabla^sn\|_{L^2}+C\|n\|_{L^\infty}\|\nabla^sw\|_{L^2}\|\nabla^sn\|_{L^2}  \\
				&\leq\varepsilon\|\nabla^s\text{div} w^h\|_{L^2}^2+C(1+t)^{-3-s},
			\end{align*}
			where we have used \eqref{lem-hk1} and the fact that
			\begin{align*}
				\|\nabla^\alpha (n,w,\phi,\psi)\|_{L^\infty}\leq C\|\nabla^{\alpha+1} (n,w,\phi,\psi)\|_{L^2}^{\frac{1}{2}} \|\nabla^{\alpha+2} (n,w,\phi,\psi)\|_{L^2}^{\frac{1}{2}}\leq C(1+t)^{-\frac{3}{2}-\frac{\alpha}{2}},~~~0\leq \alpha\leq s-2.
			\end{align*}
			It also holds
			\begin{align*}
				\left| \int_{\mathbb{R}^3}\nabla^s f_1^\ell\cdot\nabla^sn^h\mathrm{d}x \right|
				&\leq  \|\nabla^{s-1}f_1^\ell\|_{L^2}\|\nabla^s n^h\|_{L^2}\\
				&\leq  \|\nabla^{s-1}f_1\|_{L^2}\|\nabla^s n\|_{L^2}\\
				&\leq \|\nabla^{s-1}(w\cdot\nabla n)\|_{L^2}\|\nabla^sn\|_{L^2}
				+\|\nabla^{s-1}(n\text{div}w)\|_{L^2}\|\nabla^sn\|_{L^2}\\
				&\leq C(\|\nabla^{s-1}w\|_{L^2}\|\nabla n\|_{L^\infty}+\|w\|_{L^\infty}\|\nabla^sn\|_{L^2})\|\nabla^sn\|_{L^2}\\
				&\quad+C(\|\nabla^{s-1}n\|_{L^2}\|\text{div}w\|_{L^\infty}+\|n\|_{L^\infty}\|\nabla^sw\|_{L^2})\|\nabla^sn\|_{L^2}\\
				&\leq C(1+t)^{-3-s}.
			\end{align*}
			Therefore, we have
			\begin{align}
				\frac{1}{2}\frac{\mathrm{d}}{\mathrm{d}t}\|\nabla^sn^h\|_{L^2}^2+c\int_{\mathbb{R}^3} \nabla^s\text{div}w^h\cdot\nabla^sn^h \mathrm{d}x
				\leq \varepsilon\|\nabla^s\text{div} w^h\|_{L^2}^2+C(1+t)^{-3-s}.\label{331}
			\end{align}
			Similarly,	applying the operator $\nabla^s$ to $\eqref{nsac1}_2$, multiplying the resulting equation by $\nabla^s w^h$, and then integrating over $\mathbb{R}^3$, we obtain 	
			\begin{align}
				&\frac{1}{2}\frac{\mathrm{d}}{\mathrm{d}t}\|\nabla^s w^h\|_{L^2}^2-c\int_{\mathbb{R}^3} \nabla^s{\rm{div}}w^h\cdot\nabla^s n^h \mathrm{d}x-\sigma\int_{\mathbb{R}^3} \nabla^s {\rm{div}}w^h\cdot\nabla^s\phi^h\mathrm{d}x\notag\\		
				&\quad+\frac{3\nu}{4}\|\nabla^{s}\nabla w^h\|_{L^2}^2+\frac{3(\nu+\eta)}{4}\|\nabla^s{\rm{div}} w^h\|_{L^2}^2\notag\\
				&\leq C(1+t)^{-3-s}.\label{332}
			\end{align}
			Applying $\nabla^s$ to $\eqref{nsac1}_3$, and taking the inner product with $\nabla^s\phi^h$ over $\mathbb{R}^3$ gives
			\begin{align}
				&\frac{1}{2}\frac{\mathrm{d}}{\mathrm{d}t}\|\nabla^s\phi^h\|_{L^2}^2+\sigma\int_{\mathbb{R}^3} \nabla^s {\rm{div}}w^h\cdot\nabla^s\phi^h \mathrm{d}x+b\int_{\mathbb{R}^3}\nabla^s{\rm{div}}\psi^h\cdot\nabla^s\phi^h \mathrm{d}x\notag\\
				&=\int_{\mathbb{R}^3}\nabla^sf_3^h\cdot\nabla^s\phi^h\mathrm{d}x
				=\int_{\mathbb{R}^3}\nabla^sf_3\cdot\nabla^s\phi^h\mathrm{d}x
				-\int_{\mathbb{R}^3}\nabla^sf_3^\ell\cdot\nabla^s\phi^h\mathrm{d}x.\label{32501}
			\end{align}	
			The first term on the right-hand side of \eqref{32501} is estimated below
			\begin{align*}
				\int_{\mathbb{R}^3}\nabla^sf_3\cdot\nabla^s\phi^h\mathrm{d}x
				&=-c\int_{\mathbb{R}^3} \nabla^s(w\cdot\nabla \phi)\cdot \nabla^s\phi^h \mathrm{d}x-c(\gamma-1)\int_{\mathbb{R}^3} \nabla^s(\phi\text{div}w)\cdot\nabla^s \phi^h\mathrm{d}x\\
				&\quad+\frac{\sigma}{c}\int_{\mathbb{R}^3} \nabla^s\Big(\frac{2\nu|\mathbb{D}w|^2+\eta({\rm{div}}w)^2}{1+n}\Big)\cdot\nabla^s\phi^h \mathrm{d}x+b\int_{\mathbb{R}^3}\nabla^s\Big(\frac{n\text{div}\psi}{1+n}\Big)\cdot\nabla^s\phi^h \mathrm{d}x\\
				&\triangleq Y_1+Y_2+Y_3+Y_4.
			\end{align*}
			Thus, we have
			\begin{align*}
				\left|\int_{\mathbb{R}^3} \nabla^s(w\cdot\nabla \phi)\cdot \nabla^s\phi^h \mathrm{d}x\right|
				&\leq \left|\int_{\mathbb{R}^3} (\nabla^s(w\cdot\nabla \phi)-w\cdot\nabla^s \nabla \phi)\cdot \nabla^s\phi^h \mathrm{d}x\right|+\left|\int_{\mathbb{R}^3}w\cdot\nabla^s\nabla\phi\cdot\nabla^s\phi^h\mathrm{d}x\right|\\
				&\leq C(\|\nabla w\|_{L^\infty}\|\nabla^s\phi\|_{L^2}+\|\nabla \phi\|_{L^\infty}\|\nabla^s w\|_{L^2})\|\nabla^s\phi^h\|_{L^2}\\
				&~~+\left|\int_{\mathbb{R}^3}w\cdot\nabla^s\nabla\phi^\ell\cdot\nabla^s\phi^h\mathrm{d}x\right|
				+\left|\int_{\mathbb{R}^3}w\cdot\nabla^s\nabla\phi^h\cdot\nabla^s\phi^h\mathrm{d}x\right|\\
				&\leq C (\|\nabla w\|_{L^\infty}\|\nabla^s\phi\|_{L^2}+\|\nabla\phi\|_{L^\infty}\|\nabla^sw\|_{L^2})\|\nabla^s\phi\|_{L^2}\\
				&~~+C \|w\|_{L^\infty}\|\nabla^{s+1}\phi^\ell\|_{L^2}\|\nabla^s\phi^h\|_{L^2}
				+C\|\text{div}w\|_{L^\infty}\|\nabla^s\phi^h\|_{L^2}^2\\
				&\leq C(\|\nabla w\|_{L^\infty}\|\nabla^s\phi\|_{L^2}+\|\nabla\phi\|_{L^\infty}\|\nabla^sw\|_{L^2})\|\nabla^s\phi\|_{L^2}+C \|w\|_{L^\infty}\|\nabla^s\phi\|_{L^2}^2\\
				&\leq C(1+t)^{-3-s},
			\end{align*}
			which implies that
			\begin{align}
				|Y_1|\leq C(1+t)^{-3-s}.\label{y1}
			\end{align}
			In the same way, one has
			\begin{align}
				|Y_2|+|Y_3|\leq \varepsilon(\|\nabla^s\text{div}w^h\|_{L^2}^2+\|\nabla^s\nabla w^h\|_{L^2}^2)+C(1+t)^{-3-s}.\label{y23}
			\end{align}
			Noting that $Y_4$ can be rewritten as
			$$
			\begin{aligned}
				Y_4&=b \int_{\mathbb{R}^3}\Big(\nabla^s\Big(\frac{n\text{div}\psi}{1+n}\Big)- \frac{n}{1+n}\nabla^s\text{div}\psi\Big)\cdot\nabla^s \phi^h\mathrm{d}x
				+b\int_{\mathbb{R}^3}\frac{n}{1+n}\nabla^s\text{div}\psi \cdot\nabla^s\phi^h\mathrm{d}x\\
				&=b \int_{\mathbb{R}^3}\Big(\nabla^s\Big(\frac{n\text{div}\psi}{1+n}\Big)- \frac{n}{1+n}\nabla^s\text{div}\psi\Big)\cdot\nabla^s \phi^h\mathrm{d}x
				+b\int_{\mathbb{R}^3}\frac{n}{1+n}\nabla^s\text{div}\psi^h \cdot\nabla^s\phi^h\mathrm{d}x\\
				&\quad+b\int_{\mathbb{R}^3}\frac{n}{1+n}\nabla^s\text{div}\psi^\ell\cdot\nabla^s\phi^h\mathrm{d}x\\
				&\triangleq Y_{41}+Y_{42}+Y_{43}.
			\end{aligned}
			$$
			It is easy to verify that
			$$
			\begin{aligned}
				|Y_{41}|&\lesssim \Big(\Big\|\nabla\Big(\frac{n}{1+n}\Big)\Big\|_{L^\infty}\|\nabla^s\psi\|_{L^2}+
				\Big\|\nabla^s\Big(\frac{n}{1+n}\Big)\Big\|_{L^2}\|\text{div}\psi\|_{L^\infty}\Big)\|\nabla^s\phi^h\|_{L^2}\\
				&\lesssim (\|\nabla n\|_{L^\infty}\|\nabla^s\psi\|_{L^2}+\|\nabla^s n\|_{L^2}\|\nabla \psi\|_{L^\infty})\|\nabla^s \phi^h\|_{L^2}\\
				&\lesssim (\|\nabla n\|_{L^\infty}\|\nabla^s\psi\|_{L^2}+\|\nabla^s n\|_{L^2}\|\nabla \psi\|_{L^\infty})\|\nabla^s \phi\|_{L^2}\\
				&\lesssim (1+t)^{-3-s}.
			\end{aligned}
			$$
			As for $Y_{42}$, in terms of integration by parts, we obtain
			$$
			\begin{aligned}
				Y_{42}&=-b\int_{\mathbb{R}^3}\nabla\Big(\frac{n}{1+n}\Big)\nabla^s\psi^h\cdot\nabla^s\phi^h \mathrm{d}x-b\int_{\mathbb{R}^3}\frac{n}{1+n}\nabla^s\psi^h\cdot\nabla^s\nabla\phi^h\mathrm{d}x\\
				&\triangleq Y_{421}+Y_{422}.
			\end{aligned}
			$$
			The first term $Y_{421}$ is bounded by
			\begin{align*}
				|Y_{421}|\leq C\|\nabla n\|_{L^\infty}\|\nabla^s\psi^h\|_{L^2}\|\nabla^s\phi^h\|_{L^2}
				\leq C\|\nabla n\|_{L^\infty}\|\nabla^s\psi\|_{L^2}\|\nabla^s\phi\|_{L^2}\leq C(1+t)^{-3-s}.
			\end{align*}
			With the help of $\eqref{nsac1}_{4}$ and $\eqref{nsac1}_{1}$, the second term $Y_{422}$ is calculated below
			$$
			\begin{aligned}
				Y_{422}&=-b\int_{\mathbb{R}^3}\frac{n}{1+n}\nabla^s\psi^h \cdot\nabla^s\nabla\phi^h \mathrm{d}x\\
				&=\int_{\mathbb{R}^3}\frac{n}{1+n}\nabla^s\psi^h \cdot\nabla^s(\partial_t\psi^h+\frac{1}{\tau}\psi^h) \mathrm{d}x\\
				&=\frac{1}{2}\frac{\mathrm{d}}{\mathrm{d}t}\int_{\mathbb{R}^3}\frac{n}{1+n}|\nabla^s\psi^h|^2\mathrm{d}x-\frac{1}{2}\int_{\mathbb{R}^3}\Big(\frac{n}{1+n}\Big)_t|\nabla^s\psi^h|^2\mathrm{d}x+\int_{\mathbb{R}^3}\frac{n}{\tau(1+n)}|\nabla^s\psi^h|^2\mathrm{d}x\\
				&=\frac{1}{2}\frac{\mathrm{d}}{\mathrm{d}t}\int_{\mathbb{R}^3}\frac{n}{1+n}|\nabla^s\psi^h|^2\mathrm{d}x-\frac{1}{2}\int_{\mathbb{R}^3}\frac{n_t}{(1+n)^2}|\nabla^s\psi^h|^2\mathrm{d}x+\int_{\mathbb{R}^3}\frac{n}{\tau(1+n)}|\nabla^s\psi^h|^2\mathrm{d}x\\
				&=\frac{1}{2}\frac{\mathrm{d}}{\mathrm{d}t}\int_{\mathbb{R}^3}\frac{n}{1+n}|\nabla^s\psi^h|^2\mathrm{d}x-\frac{1}{2}\int_{\mathbb{R}^3}\frac{f_1-c\text{div}w}{(1+n)^2}|\nabla^s\psi^h|^2\mathrm{d}x+\int_{\mathbb{R}^3}\frac{n}{\tau(1+n)}|\nabla^s\psi^h|^2\mathrm{d}x\\
				&\leq \frac{1}{2}\frac{\mathrm{d}}{\mathrm{d}t}\int_{\mathbb{R}^3}\frac{n}{1+n}|\nabla^s\psi^h|^2\mathrm{d}x+C(1+t)^{-3-s}.
			\end{aligned}
			$$
			It then concludes from above to get
			\begin{align}
				|Y_4|\leq |Y_{41}|+|Y_{421}|+|Y_{422}|\leq \frac{1}{2}\frac{\mathrm{d}}{\mathrm{d}t}\int_{\mathbb{R}^3}\frac{n}{1+n}|\nabla^s\psi^h|^2\mathrm{d}x
				+C(1+t)^{-3-s}.\label{y4}
			\end{align}
			It is easy to show that the second term on the right-hand side of \eqref{32501} satisfies
			\begin{align}\label{THZ003}
				\int_{\mathbb{R}^3}\nabla^sf_3^\ell\cdot\nabla^s\phi^h\mathrm{d}x\leq C(1+t)^{-3-s}.
			\end{align}
			In terms of \eqref{y1}-\eqref{THZ003} and \eqref{32501}, we verify that
			\begin{align}
				&\frac{1}{2}\frac{\mathrm{d}}{\mathrm{d}t}\|\nabla^s\phi^h\|_{L^2}^2+\sigma\int_{\mathbb{R}^3} \nabla^s {\rm{div}}w^h\cdot\nabla^s\phi^h \mathrm{d}x+b\int_{\mathbb{R}^3}\nabla^s{\rm{div}}\psi^h\cdot\nabla^s\phi^h\mathrm{d}x\notag\\
				&\leq C(1+t)^{-3-s}+\varepsilon(\|\nabla^s\nabla w^h\|_{L^2}^2+\|\nabla^s\text{div}w^h\|_{L^2}^2)\notag\\
				&\quad+\frac{1}{2}\frac{\mathrm{d}}{\mathrm{d}t}\int_{\mathbb{R}^3}\frac{n}{1+n}|\nabla^s\psi^h|^2\mathrm{d}x.\label{3T1}
			\end{align}
			Taking $\int_{\mathbb{R}^3}\nabla^\alpha \eqref{nsac1}_4 \cdot \nabla^s\psi^h\mathrm{d}x$, one has
			\begin{equation}\label{3T2}
				\begin{aligned}
					&\frac{1}{2}\frac{\mathrm{d}}{\mathrm{d}t}\|\nabla^s \psi^h\|_{L^2}^2+\frac{1}{\tau}\|\nabla^s\psi^h\|_{L^2}^2-
					b\int_{\mathbb{R}^3}\nabla^s{\rm{div}}\psi^h\cdot\nabla^s\phi^h\mathrm{d}x=0.
				\end{aligned}
			\end{equation}
			It then follows from \eqref{3T1} and \eqref{3T2} that
			\begin{align}
				&\frac{1}{2}\frac{\mathrm{d}}{\mathrm{d}t}\Big(\|\nabla^s\phi^h\|_{L^2}^2+\frac{1}{1+n}\|\nabla^s\psi^h\|_{L^2}^2\Big)+\frac{1}{\tau}\|\nabla^s\psi^h\|_{L^2}^2+\sigma\int_{\mathbb{R}^3} \nabla^s {\rm{div}}w^h\cdot\nabla^s\phi^h\mathrm{d}x\notag\\
				&\leq C(1+t)^{-3-s}+\varepsilon(\|\nabla^s\nabla w^h\|_{L^2}^2+\|\nabla^s\text{div}w^h\|_{L^2}^2).\label{336}
			\end{align}
			Similarly, we also have
			\begin{align}
				&\frac{c}{2}\|\nabla^sn^h\|_{L^2}^2+\frac{\mathrm{d}}{\mathrm{d}t}\int_{\mathbb{R}^3} \nabla^{s-1}w^h\cdot\nabla^sn^h\mathrm{d}x\notag\\
				&\leq C(1+t)^{-3-s}+C(\|\nabla^s\nabla w^h\|_{L^2}^2
				+\|\nabla^s{\rm{div}} w^h\|_{L^2}^2+\|\nabla^s\phi^h\|_{L^2}^2)+\frac{\sigma^2}{c}\|\nabla^\alpha \phi^h\|_{L^2}^2, \label{334}
			\end{align}	
			and
			\begin{align}
				&\frac{3b}{4}\|\nabla^s\phi^h\|_{L^2}^2+\frac{\mathrm{d}}{\mathrm{d}t}\int_{\mathbb{R}^3} \nabla^{s-1}\psi^h\cdot\nabla^s\phi^h\mathrm{d}x\notag\\
				&\leq C(1+t)^{-3-s}+C(\|\nabla^s\nabla w^h\|_{L^2}^2+\|\nabla^s\text{div}w^h\|_{L^2}^2+\|\nabla^s\psi^h\|_{L^2}^2).\label{335}
			\end{align}	
			Choosing the suitably small constant $\beta_3>0$, taking \eqref{331}+\eqref{332}+\eqref{336}+$\frac{bc\beta_3}{4\sigma^2}\times$\eqref{334}+$\beta_3\times$\eqref{335} yields
			\begin{align*}
				&\frac{\mathrm{d}}{\mathrm{d}t}
				\Big\{\frac{1}{2}\Big(\|\nabla^sn^h\|_{L^2}^2+\|\nabla^sw^h\|_{L^2}^2+\|\nabla^s\phi^h\|_{L^2}^2+\frac{1}{1+n}\|\nabla^s\psi^h\|_{L^2}^2\Big)\\
				&\quad+\frac{bc\beta_3}{4\sigma^2}\int_{\mathbb{R}^3} \nabla^{s-1} w^h\cdot\nabla^sn^h\mathrm{d}x+\beta_3\int_{\mathbb{R}^3} \nabla^{s-1}\psi^h\cdot\nabla^s\phi^h\mathrm{d}x
				\Big\}\\
				&\quad+\frac{3\nu}{4}\|\nabla^{s}\nabla w^h\|_{L^2}^2+\frac{3(\nu+\eta)}{4}\|\nabla^s{\rm{div}} w^h\|_{L^2}^2+\frac{1}{\tau}\|\nabla^s\psi^h\|_{L^2}^2+\frac{bc^2\beta_3}{8\sigma^2}\|\nabla^\alpha n^h\|_{L^2}^2+\frac{b\beta_3}{2}\|\nabla^\alpha \phi^h\|_{L^2}^2\\
				&\leq C(1+t)^{-3-s}+C(\varepsilon+\beta_3)(\|\nabla^s\nabla w^h\|_{L^2}^2+\|\nabla^s\text{div}w^h\|_{L^2}^2).
			\end{align*}
			Due to the smallness of $\varepsilon$ and $\beta_3$, we have
			\begin{align*}
				&\frac{\mathrm{d}}{\mathrm{d}t}
				\Big\{\frac{1}{2}\Big(\|\nabla^sn^h\|_{L^2}^2+\|\nabla^sw^h\|_{L^2}^2+\|\nabla^s\phi^h\|_{L^2}^2+\frac{1}{1+n}\|\nabla^s\psi^h\|_{L^2}^2\Big)\\
				&\quad+\frac{bc\beta_3}{4\sigma^2}\int_{\mathbb{R}^3} \nabla^{s-1} w^h\cdot\nabla^sn^h\mathrm{d}x+\beta_3\int_{\mathbb{R}^3} \nabla^{s-1}\psi^h\cdot\nabla^s\phi^h\mathrm{d}x
				\Big\}\\
				&\quad+\frac{\nu}{2}\|\nabla^{s}\nabla w^h\|_{L^2}^2+\frac{(\nu+\eta)}{2}\|\nabla^s{\rm{div}} w^h\|_{L^2}^2+\frac{1}{\tau}\|\nabla^s\psi^h\|_{L^2}^2+\frac{bc^2\beta_3}{16\sigma^2}\|\nabla^\alpha n^h\|_{L^2}^2+\frac{b\beta_3}{4}\|\nabla^\alpha \phi^h\|_{L^2}^2\\
				&\leq C(1+t)^{-3-s},
			\end{align*}
			and
			\begin{align*}
				&\frac{1}{2}\Big(\|\nabla^sn^h\|_{L^2}^2+\|\nabla^sw^h\|_{L^2}^2+\|\nabla^s\phi^h\|_{L^2}^2+\frac{1}{1+n}\|\nabla^s\psi^h\|_{L^2}^2\Big)\notag\\
				&\quad+\frac{bc\beta_3}{4\sigma^2}\int_{\mathbb{R}^3} \nabla^{s-1} w^h\cdot\nabla^sn^h\mathrm{d}x+\beta_3\int_{\mathbb{R}^3} \nabla^{s-1}\psi^h\cdot\nabla^s\phi^h\mathrm{d}x\\
				&\sim  (\|\nabla^sn^h\|_{L^2}^2+\|\nabla^sw^h\|_{L^2}^2+\|\nabla^s\phi^h\|_{L^2}^2+\|\nabla^s\psi^h\|_{L^2}^2).
			\end{align*}
			As a consequence, it holds
			\begin{align*}
				&\frac{\mathrm{d}}{\mathrm{d}t}\big(\|\nabla^s(n^h,w^h,\phi^h,\psi^h)\|_{L^2}^2+\beta_3\int_{\mathbb{R}^3}(\nabla^{s-1}w^h\cdot\nabla^sn^h+ \nabla^{s-1}\psi^h\cdot\nabla^s\phi^h)\mathrm{d}x
				\big)\notag\\
				&\quad+C_5\big( \|\nabla^s(n^h,w^h,\phi^h,\psi^h)\|_{L^2}^2+\beta_3\int_{\mathbb{R}^3}(\nabla^{s-1}w^h\cdot\nabla^sn^h+ \nabla^{s-1}\psi^h\cdot\nabla^s\phi^h)\mathrm{d}x\big)\notag\\
				&\leq C(1+t)^{-3-s},
			\end{align*}
			where the positive constant $C_5$ is independent of time. In terms of Gr\"{o}nwall's inequality, one has
			\begin{equation}\label{337}
				\|\nabla^s(n^h,w^h,\phi^h,\psi^h)(t)\|_{L^2}\leq C(1+t)^{-\frac{3}{2}-\frac{s}{2}}.	
			\end{equation}
			Now we are in a position to establish the time-decay estimates of $\|\nabla^s\psi^\ell\|_{L^2}$. First, we have
			\begin{align*}
				\psi(x,t)
				=&G_{41}*n_0+G_{42}*w_0+G_{43}*\phi_0+G_{44}*\psi_0\notag\\
				&+\int_0^t\Big\{G_{41}(t-r)*f_1(r)+G_{42}(t-r)*f_2(r)+G_{43}(t-r)*f_3(r)\Big\}\mathrm{d}r.
			\end{align*}
			Due to Parseval's equality, one has
			\begin{align*}
				\|\nabla^s\psi^\ell(t)\|_{L^2}
				&\leq \big\|\nabla^sG_{41}^\ell*n_0+\nabla^sG_{42}^\ell*w_0+\nabla^sG_{43}^\ell*\phi_0+\nabla^sG_{44}^\ell*\psi_0\big\|_{L^2}\notag\\
				&\quad+\int_0^t\big\|\nabla^sG_{41}^\ell(t-r)*f_1(r)+\nabla^sG_{42}^\ell(t-r)*f_2(r)+\nabla^sG_{43}^\ell(t-r)*f_3(r)\big\|_{L^2}\mathrm{d}r\notag\\
				&\leq C(1+t)^{-\frac{5}{4}-\frac{s}{2}}\|(n_0,w_0,\phi_0,\psi_0)\|_{L^1}+C\int_0^{\frac{t}{2}}
				(1+t-r)^{-\frac{5}{4}-\frac{s}{2}}(1+r)^{-2}\mathrm{d}r\notag\\
				&\quad +\int_{\frac{t}{2}}^t (1+t-r)^{-\frac{5}{4}}(1+r)^{-\frac{5}{4}-\frac{s}{2}}\mathrm{d}r\notag\\
				&\leq C(1+t)^{-\frac{5}{4}-\frac{s}{2}},
			\end{align*}
			which, together with \eqref{337}, yields
			\begin{align*}
				\|\nabla^s\psi(t)\|_{L^2}\leq \|\nabla^s\psi^\ell(t)\|_{L^2}+\|\nabla^s\psi^h(t)\|_{L^2}
				\leq C(1+t)^{-\frac{5}{4}-\frac{s}{2}}.
			\end{align*}
			This completes the proof of this lemma.
		\end{proof}
		\section{The Proofs of Theorems \ref{thm1} and \ref{thm3}}\label{Sec5}
		\hspace{2em}Since  the \emph{a priori} estimates and the time-decay rates have been established in Sections \ref{Sec3} and \ref{Sec4}, we are now in a position to complete the proofs of our main results.
		
		\noindent{\textbf{Proof of Theorem \ref{thm1}.}\ }
		By Proposition \ref{pro-est}, it gives
		\begin{equation}\label{32901}
			\|(n,w,\phi,\psi)(t)\|_{H^s}^2+\int_0^t\left(\|\nabla n(r)\|_{H^{s-1}}^2+\|\nabla w(r)\|_{H^{s}}^2+\|\nabla\phi(r)\|_{H^{s-1}}^2+\| \psi(r)\|_{H^{s}}^2\right)\mathrm{d}r\leq C\delta_0^2.
		\end{equation}
		Choosing the initial data $\delta_0$ sufficiently small such that $C\delta_0^2 \leq \frac{1}{4}\delta^2$, we could close the \emph{a priori} assumption \eqref{a-priori est}. By virtue of \eqref{new-var} and \eqref{32901}, we can obtain \eqref{main-est}. Then, the global well-posedness of the solution is obtained based on the continuous argument.
		It remains to establish the large-time behavior of the solution $(\rho,u,\theta,q)$. By Lemmas \ref{lem-hs}, \ref{lem-hk} and \ref{lemphi}, it is easy to verify that, for $0\leq k\leq s$,
		\begin{equation}\label{thm2-p1}
			\|\nabla^k(n,w,\phi)(t)\|_{L^2}\leq C(1+t)^{-\frac{3}{4}-\frac{k}{2}},\quad
			\|\nabla^k\psi(t)\|_{L^2}\leq C(1+t)^{-\frac{5}{4}-\frac{k}{2}},
		\end{equation}
		which, together with \eqref{new-var}, yields \eqref{upper}. This completes the proof of Theorem \ref{thm1}.\endproof
		\vskip 2mm
		\noindent{\textbf{Proof of Theorem \ref{thm3}.}\ }
		Due to Parseval's equality and \eqref{nonlinear-v}, one has
		\begin{align}
			\|n(t)\|_{L^2}
			&\geq \|G_{11}*n_0+G_{12}*w_0+G_{13}*\phi_0+G_{14}*\psi_0\|_{L^2}\notag\\
			&\quad-\int_0^t \|G_{11}(t-r)*f_1(r)+G_{12}(t-r)*f_2(r)+G_{13}(t-r)*f_3(r)\|_{L^2}\mathrm{d}r.\label{thm3-p1}
		\end{align}
		By the definition of $F$, \eqref{l-inf-est}, \eqref{lem-hs0} and \eqref{lem-hs5}, we get
		\begin{align}
			&\int_0^t \|G_{11}(t-r)*f_1(r)+G_{12}(t-r)*f_2(r)+G_{13}(t-r)*f_3(r)\|_{L^2}\mathrm{d}r\notag\\
			&\leq C\int_0^t\big\{(1+t-r)^{-\frac{3}{4}}
			\|F(r)\|_{L^1}+e^{-R(t-r)}\|F(r)\|_{L^2}\big\}\mathrm{d}r\notag\\
			&\leq C\mathcal{M}(t)\int_0^t(1+t-r)^{-\frac{3}{4}}(1+r)^{-\frac{3}{4}}(\|\nabla(n,w,\phi,\psi) (r)\|_{L^2}+\|\nabla^2w(r)\|_{L^2})\mathrm{d}r\notag\\
			&\leq C\mathcal{M}(t)\delta_0(1+t)^{-\frac{3}{4}}\notag\\
			&\leq C(1+t)^{-\frac{3}{4}}(\delta_0^2+\delta_0A_0).\label{thm3-p2}
		\end{align}
		Since $\delta_0$ is sufficiently small, substituting \eqref{pro-li-d1} and \eqref{thm3-p2} into \eqref{thm3-p1} yields
		\begin{equation}\label{thm3-p3}
			\|n(t)\|_{L^2}\geq \bar{c}_1(1+t)^{-\frac{3}{4}}- C(1+t)^{-\frac{3}{4}}
			(\delta_0^2+A_0\delta_0)\geq \frac{1}{2}\bar{c}_1(1+t)^{-\frac{3}{4}}.
		\end{equation}
		If $t$ is large enough, it gives rise to
		\begin{align}
			\|\Lambda^{-1}n(t)\|_{L^2}&\leq \|\Lambda^{-1}n^\ell(t)\|_{L^2}
			+\|\Lambda^{-1}n^h(t)\|_{L^2}\notag\\
			&\leq C(1+t)^{-\frac{1}{4}}+C\int_0^t(1+t-r)^{-\frac{1}{4}}\|F(r)\|_{L^1}\mathrm{d}r+C\|n^h(t)\|_{L^2}\notag\\
			&\leq  C(1+t)^{-\frac{1}{4}}+C\|n\|_{L^2}\notag\\
			&\leq C(1+t)^{-\frac{1}{4}}.\label{thm3-p4}
		\end{align}
		Thanks to Lemmas \ref{lem-gn} and \ref{lem-ff}, it is easy to verify that
		\begin{equation}\label{32502}
			\|U\|_{L^2}\leq C\|\Lambda^{-1}U\|_{L^2}^{\frac{k}{k+1}}
			\|\nabla^k U\|_{L^2}^{\frac{1}{k+1}}.
		\end{equation}
		Then, combining \eqref{32502} with \eqref{thm3-p3} and \eqref{thm3-p4} yields
		\begin{equation}\label{thm3-p5}
			\|\nabla^kn(t)\|_{L^2}\geq \tilde{c}_4(1+t)^{-\frac{3}{4}-\frac{k}{2}},
		\end{equation}
		where $\tilde{c}_4$ is a positive constant independent of time. By \eqref{new-var}, there exists a constant $\bar{C}_1>0$ such that
		\begin{align*}
			\|\nabla^k(\rho-\rho_*)(t)\|_{L^2}\geq \bar{C}_{1}(1+t)^{-\frac{3}{4}-\frac{k}{2}}.
		\end{align*}
		Similarly, there also exists positive constants $\bar{C}_2, \bar{C}_3$ and $\bar{C}_4$ independent of time such that
		$$
		\|\nabla^ku(t)\|_{L^2}\geq \bar{C}_2(1+t)^{-\frac{3}{4}-\frac{k}{2}},\quad
		\|\nabla^k(\theta-\theta_*)(t)\|_{L^2}\geq \bar{C}_3(1+t)^{-\frac{3}{4}-\frac{k}{2}},\quad
		\|\nabla^kq(t)\|_{L^2}\geq \bar{C}_4(1+t)^{-\frac{5}{4}-\frac{k}{2}}.
		$$
		Then choosing $\bar{C}_*=\min\{\bar{C}_1 ,\bar{C}_2,\bar{C}_3,\bar{C}_4\}$,  we complete the proof of Theorem \ref{thm3}.
		\endproof
		
		\section*{Acknowledgements}
		
		\hspace{2em} The authors would like to thank Professor Hailiang Li and Professor  Feimin Huang for helpful discussions and comments. Li's work is supported by the National Natural Science Foundation of China (Grant No.12331007), and the ``333 Project'' of Jiangsu Province. Tang's work is supported by the National Natural Science Foundation of China (Grant No.12501293), and Anhui Provincial Natural Science Foundation (Grant No.2408085QA031). Zhang's work is supported by the National Natural Science Foundation of China  (Grant No.12501300), the Basic Research Program of Jiangsu (Grant No. BK20250881), and the Natural
Science Foundation of the Jiangsu Higher Education Institutions of China (Grant No. 25KJB110001).
		\hspace{4em}
		
\section*{Conflict of interest statement}
		On behalf of all authors, the corresponding author states that there is no conflict of interest.

\section*{ Data Availability Statement}
	Data sharing not applicable to this article as no datasets were generated or analysed during the current study.


\begin{thebibliography}{10}
	
	\bibitem{MR4312119}
	Y.~Bai, L.~He, and H.~Zhao.
	\newblock Nonlinear stability of rarefaction waves for a hyperbolic system with
	{C}attaneo's law.
	\newblock {\em Commun. Pure Appl. Anal.}, 20(7-8):2441--2474, 2021.
	
	\bibitem{M1}
	C.~Cattaneo.
	\newblock Sulla coduzione del calore.
	\newblock {\em Atti Sem. Mat. Fis. Univ. Modena 3}, pages 83--101, 1948.
	
	\bibitem{MR4445672}
	Y.~Chen, H.-L. Li, and H.~Tang.
	\newblock Global existence and optimal time decay rates of 3{D} non-isentropic
	compressible {N}avier-{S}tokes-{A}llen-{C}ahn system.
	\newblock {\em J. Differential Equations}, 334:157--193, 2022.
	
	\bibitem{MR2225997}
	Y.~Cho and B.~J. Jin.
	\newblock Blow-up of viscous heat-conducting compressible flows.
	\newblock {\em J. Math. Anal. Appl.}, 320(2):819--826, 2006.
	
	\bibitem{MR1970039}
	H.~J. Choe and H.~Kim.
	\newblock Strong solutions of the {N}avier-{S}tokes equations for isentropic
	compressible fluids.
	\newblock {\em J. Differential Equations}, 190(2):504--523, 2003.
	
	\bibitem{MR3708192}
	L.~Du and Z.~Wu.
	\newblock Solving the non-isentropic {N}avier-{S}tokes equations in odd space
	dimensions: the {G}reen function method.
	\newblock {\em J. Math. Phys.}, 58(10):101506, 38, 2017.
	
	\bibitem{MR1867887}
	E.~Feireisl, A.~Novotn\'{y}, and H.~Petzeltov\'{a}.
	\newblock On the existence of globally defined weak solutions to the
	{N}avier-{S}tokes equations.
	\newblock {\em J. Math. Fluid Mech.}, 3(4):358--392, 2001.
	
	\bibitem{MR2533927}
	H.~D. Fern\'{a}ndez~Sare and R.~Racke.
	\newblock On the stability of damped {T}imoshenko systems: {C}attaneo versus
	{F}ourier law.
	\newblock {\em Arch. Ration. Mech. Anal.}, 194(1):221--251, 2009.
	
	\bibitem{gw2012-decay}
	Y.~Guo and Y.~Wang.
	\newblock Decay of dissipative equations and negative {S}obolev spaces.
	\newblock {\em Comm. Partial Differential Equations}, 37(12):2165--2208, 2012.
	
	\bibitem{MR896014}
	D.~Hoff.
	\newblock Global existence for {$1$}{D}, compressible, isentropic
	{N}avier-{S}tokes equations with large initial data.
	\newblock {\em Trans. Amer. Math. Soc.}, 303(1):169--181, 1987.
	
	\bibitem{MR1339675}
	D.~Hoff.
	\newblock Global solutions of the {N}avier-{S}tokes equations for
	multidimensional compressible flow with discontinuous initial data.
	\newblock {\em J. Differential Equations}, 120(1):215--254, 1995.
	
	\bibitem{MR3186239}
	Y.~Hu and R.~Racke.
	\newblock Formation of singularities in one-dimensional thermoelasticity with
	second sound.
	\newblock {\em Quart. Appl. Math.}, 72(2):311--321, 2014.
	
	\bibitem{MR3518771}
	Y.~Hu and R.~Racke.
	\newblock Compressible {N}avier-{S}tokes equations with hyperbolic heat
	conduction.
	\newblock {\em J. Hyperbolic Differ. Equ.}, 13(2):233--247, 2016.
	
	\bibitem{MR4413297}
	Y.~Hu, R.~Racke, and N.~Wang.
	\newblock Formation of singularities for one-dimensional relaxed compressible
	{N}avier-{S}tokes equations.
	\newblock {\em J. Differential Equations}, 327:145--165, 2022.
	
	\bibitem{MR4776378}
	F.~Huang, H.~Tang, G.~Wu, and W.~Zou.
	\newblock Global well-posedness and large-time behavior of classical solutions
	to the {E}uler-{N}avier-{S}tokes system in {$\Bbb R^3$}.
	\newblock {\em J. Differential Equations}, 410:76--112, 2024.
	
	\bibitem{MR2877344}
	X.~Huang, J.~Li, and Z.~Xin.
	\newblock Global well-posedness of classical solutions with large oscillations
	and vacuum to the three-dimensional isentropic compressible {N}avier-{S}tokes
	equations.
	\newblock {\em Comm. Pure Appl. Math.}, 65(4):549--585, 2012.
	
	\bibitem{MR1810944}
	S.~Jiang and P.~Zhang.
	\newblock On spherically symmetric solutions of the compressible isentropic
	{N}avier-{S}tokes equations.
	\newblock {\em Comm. Math. Phys.}, 215(3):559--581, 2001.
	
	\bibitem{MR2005201}
	S.~Jiang and P.~Zhang.
	\newblock Axisymmetric solutions of the 3{D} {N}avier-{S}tokes equations for
	compressible isentropic fluids.
	\newblock {\em J. Math. Pures Appl. (9)}, 82(8):949--973, 2003.
	
	\bibitem{1984Systems}
	S.~Kawashima.
	\newblock Systems of a hyperbolic-parabolic composite type, with applications
	to the equations of magnetohydrodynamics.
	\newblock {\em Doctoral Thesis. Kyoto: Kyoto University}, 1984.
	
	\bibitem{MR2164944}
	D.~L. Li.
	\newblock The {G}reen's function of the {N}avier-{S}tokes equations for gas
	dynamics in {$\Bbb R^3$}.
	\newblock {\em Comm. Math. Phys.}, 257(3):579--619, 2005.
	
	\bibitem{Liu2005Analysis}
	K.-C. Liu.
	\newblock Analysis of dual-phase-lag thermal behaviour in layered films with
	temperature-dependent interface thermal resistance.
	\newblock {\em J. Phys. D: Appl. Phys.}, 38(19):3722, 2005.
	
	\bibitem{MR4649149}
	M.~Liu and Z.~Wu.
	\newblock Space-time behavior of the compressible {N}avier-{S}tokes equations
	with hyperbolic heat conduction.
	\newblock {\em J. Math. Phys.}, 64(10):Paper No. 103101, 21, 2023.
	
	\bibitem{2000Effect}
	W.~B. Lor and H.~S. Chu.
	\newblock Effect of interface thermal resistance on heat transfer in a
	composite medium using the thermal wave model.
	\newblock {\em Int. J. Heat Mass Transf.}, 43(5):653--663, 2000.
	
	\bibitem{mb2002}
	A.~J. Majda and A.~L. Bertozzi.
	\newblock {\em Vorticity and Incompressible Flow}, volume~27 of {\em Cambridge
		Texts in Applied Mathematics}.
	\newblock Cambridge University Press, Cambridge, 2002.
	
	\bibitem{MR555060}
	A.~Matsumura and T.~Nishida.
	\newblock The initial value problem for the equations of motion of compressible
	viscous and heat-conductive fluids.
	\newblock {\em Proc. Japan Acad. Ser. A Math. Sci.}, 55(9):337--342, 1979.
	
	\bibitem{MR564670}
	A.~Matsumura and T.~Nishida.
	\newblock The initial value problem for the equations of motion of viscous and
	heat-conductive gases.
	\newblock {\em J. Math. Kyoto Univ.}, 20(1):67--104, 1980.
	
	\bibitem{MR713680}
	A.~Matsumura and T.~Nishida.
	\newblock Initial-boundary value problems for the equations of motion of
	compressible viscous and heat-conductive fluids.
	\newblock {\em Comm. Math. Phys.}, 89(4):445--464, 1983.
	
	\bibitem{maxwell1867iv}
	J.~C. Maxwell.
	\newblock On the dynamical theory of gases.
	\newblock {\em Philos. Trans. R. Soc.}, (157):49--88, 1867.
	
	\bibitem{MR3810847}
	T.~Nakamura and S.~Kawashima.
	\newblock Viscous shock profile and singular limit for hyperbolic systems with
	{C}attaneo's law.
	\newblock {\em Kinet. Relat. Models}, 11(4):795--819, 2018.
	
	\bibitem{MR149094}
	J.~Nash.
	\newblock Le probl\`eme de {C}auchy pour les \'{e}quations diff\'{e}rentielles
	d'un fluide g\'{e}n\'{e}ral.
	\newblock {\em Bull. Soc. Math. France}, 90:487--497, 1962.
	
	\bibitem{onsager1931reciprocal}
	L.~Onsager.
	\newblock Reciprocal relations in irreversible processes. i.
	\newblock {\em Physical review}, 37(4):405, 1931.
	
	\bibitem{MR1888164}
	R.~Racke.
	\newblock Thermoelasticity with second sound---exponential stability in linear
	and non-linear 1-d.
	\newblock {\em Math. Methods Appl. Sci.}, 25(5):409--441, 2002.
	
	\bibitem{MR2562166}
	R.~Racke.
	\newblock Thermoelasticity.
	\newblock In {\em Handbook of Differential Equations: Evolutionary Equations.
		{V}ol. {V}}, Handb. Differ. Equ., pages 315--420. Elsevier/North-Holland,
	Amsterdam, 2009.
	
	\bibitem{MR106646}
	J.~Serrin.
	\newblock On the uniqueness of compressible fluid motions.
	\newblock {\em Arch. Rational Mech. Anal.}, 3:271--288 (1959), 1959.
	
	\bibitem{MR4686805}
	H.~Tang, S.~Zhang, and W.~Zou.
	\newblock Decay of the compressible {N}avier-{S}tokes equations with hyperbolic
	heat conduction.
	\newblock {\em J. Differential Equations}, 388:1--33, 2024.
	
	\bibitem{MR4988883}
	F.~Wu and Y.~Wu.
	\newblock Global existence and optimal time-decay rates of the compressible
	{N}avier-{S}tokes-{P}oisson equations with {C}attaneo heat conduction.
	\newblock {\em Nonlinear Anal. Real World Appl.}, 89:Paper No. 104545, 2026.
	
	\bibitem{MR4436140}
	F.~Wu, S.~Zhang, and F.~Chen.
	\newblock Low {M}ach number limit of the compressible
	{N}avier-{S}tokes-{C}attaneo equations with general initial data.
	\newblock {\em Nonlinear Anal. Real World Appl.}, 73:Paper No. 103905, 24,
	2023.
	
	\bibitem{MR4400698}
	G.~Wu, Y.~Zhang, and A.~Zhang.
	\newblock Global existence and time decay rates for the 3{D} bipolar
	compressible {N}avier-{S}tokes-{P}oisson system with unequal viscosities.
	\newblock {\em Sci. China Math.}, 65(4):731--752, 2022.
	
	\bibitem{MR4175837}
	G.~Wu, Y.~Zhang, and L.~Zhou.
	\newblock Optimal large-time behavior of the two-phase fluid model in the whole
	space.
	\newblock {\em SIAM J. Math. Anal.}, 52(6):5748--5774, 2020.
	
	\bibitem{MR4720751}
	Z.~Wu, W.~Zhou, and Y.~Li.
	\newblock Decay of the compressible {N}avier-{S}tokes equations with hyperbolic
	heat conduction.
	\newblock {\em Commun. Math. Anal. Appl.}, 2(2):115--141, 2023.
	
	\bibitem{MR1488513}
	Z.~Xin.
	\newblock Blowup of smooth solutions to the compressible {N}avier-{S}tokes
	equation with compact density.
	\newblock {\em Comm. Pure Appl. Math.}, 51(3):229--240, 1998.
	
\end{thebibliography}
\end{document}